\documentclass[a4paper]{article}
\usepackage[english]{babel}

\usepackage{amssymb, amsmath,amsthm}
\usepackage{mathrsfs}
\usepackage{amscd}
\usepackage[active]{srcltx}
\usepackage{verbatim}
\usepackage{graphicx}

\usepackage[utf8]{inputenc}
\usepackage[T1]{fontenc}
\usepackage{lmodern}

\usepackage{hyperref} 
\usepackage{todonotes}
\usepackage{enumerate}
\usepackage{mathtools}
\usepackage{bbm}
\usepackage[normalem]{ulem}

\setcounter{tocdepth}{2}
\setlength{\parindent}{0pt}

\mathtoolsset{showonlyrefs,showmanualtags}

\newcommand\dd{\mathrm{d}}
\newcommand\ud{\,\mathrm{d}}

\newcommand\EE{\mathbb{E}}
\newcommand\Dd{\mathcal{D}}
\newcommand\Ss{\mathcal{S}}
\newcommand\Nn{\mathcal{N}}
\newcommand\Hh{\mathcal{H}}

\newcommand\Ff{\mathcal{F}}

\newcommand{\RR}{\mathbb{R}}

\newcommand{\NN}{\mathbb{N}}

\newcommand{\PP}{\mathbb{P}}

\newcommand{\inp}[2]{\langle #1,#2 \rangle}

\newcommand{\Ric}{\mathrm{Ric}}
\newcommand{\Rm}{\mathrm{Rm}}
\newcommand{\Nabla}{\nabla}
\newcommand{\Exp}{\mathrm{Exp}}

\newcommand{\cA}{\mathcal{A}}

\newcommand{\cC}{\mathcal{C}}
\newcommand{\cD}{\mathcal{D}}

\newcommand{\cH}{\mathcal{H}}
\newcommand{\cI}{\mathcal{I}}

\newcommand{\cL}{\mathcal{L}}

\newcommand{\cP}{\mathcal{P}}

\newcommand{\cS}{\mathcal{S}}

\newcommand{\cV}{\mathcal{V}}

\newcommand{\bE}{\mathbb{E}}

\newcommand{\bP}{\mathbb{P}}

\newcommand{\bR}{\mathbb{R}}

\newcommand{\bfH}{\mathbf{H}}

\newcommand{\bfR}{\mathbf{R}}

\newcommand{\bfV}{\mathbf{V}}

\DeclareMathOperator*{\LIM}{LIM}

\newcommand{\sn}[1]{\left|  #1 \right|} 
\newcommand{\vn}[1]{\left| \! \left| #1\right| \! \right|} 

\newcommand{\ip}[2]{\langle #1,#2\rangle}

\renewcommand{\epsilon}{\varepsilon}

\theoremstyle{plain}
\newtheorem{theorem}{Theorem}[section]
\theoremstyle{definition}
\newtheorem{example}[theorem]{Example}
\theoremstyle{remark}
\newtheorem{remark}[theorem]{Remark}
\theoremstyle{plain}

\newtheorem{lemma}[theorem]{Lemma}
\newtheorem{proposition}[theorem]{Proposition}
\newtheorem{definition}[theorem]{Definition}

\newtheorem{assumption}[theorem]{Assumption}

\numberwithin{equation}{section}

\allowdisplaybreaks

\begin{document}

\title{Classical large deviations theorems on complete Riemannian manifolds}

\author{
\renewcommand{\thefootnote}{\arabic{footnote}}
Richard C. Kraaij\footnotemark[1],
~Frank Redig\footnotemark[2],
~Rik Versendaal\footnotemark[2]
}

\footnotetext[1]{
Fakult\"at f\"ur Mathematik, Ruhr-University of Bochum, Postfach 102148, 
44721 Bochum, Germany, E-mail: \texttt{richard.kraaij@rub.de}.
}

\footnotetext[2]{
Delft Institute of Applied Mathematics, Delft University of 
Technology, P.O. Box 5031, 2600 GA Delft, The Netherlands, E-mail: \texttt{F.H.J.Redig/R.Versendaal@tudelft.nl}.
}

\date\today


\maketitle

\begin{abstract}
We generalize classical large deviations theorems to the setting of complete Riemannian manifolds. We prove the analogue of Mogulskii's theorem for geodesic random walks via a general approach using visocity solutions for Hamilton-Jacobi equations. As a corollary, we also obtain the analogue of Cram\'er's theorem. The approach also provides a new proof of Schilder's theorem. Additionally, we provide a proof of Schilder's theorem by using an embedding into Euclidean space, together with Freidlin-Wentzell theory.\\

\textit{keywords}: large deviations, Cram\'er's theorem, geodesic random walks, Riemannian Brownian motion, non-linear semigroup method, Hamilton-Jacobi equation
\end{abstract}


\tableofcontents

\section{Introduction}\label{section:Introduction}

In the theory of large deviations, a fundamental result is Cram\'er's theorem (see e.g. \cite{DZ98,Hol00}), stating that the empirical mean of $n$ independent identically
distributed random variables
satisfies the large deviation principle. The large deviation principle is intuitively stated as
$$
\PP\left(\frac1n\sum_{i=1}^n X_i \approx x\right) \approx e^{-nI(x)}.
$$
Here the rate function $I$ is the Legendre transform of the log moment generating function, i.e.,
\begin{equation}\label{iii}
I(x)= \sup_t \{\langle t, x\rangle -\Lambda(t)\},
\end{equation}
where $\Lambda(t)= \log \EE(e^{\langle t,x\rangle})$.
Cram\'er's theorem holds in a very general setting and is also the starting point of several other large deviation results.
There are also generalizations weakening the assumption of independence, e.g. the G\"artner-Ellis theorem.
Furthermore, there are various path space large deviation results which have Cram\'er's theorem as a starting point.
For example, from Cram\'er's theorem in the Banach space setting one can derive Schilder's theorem for
path space large deviations of rescaled Brownian motion (see e.g. \cite{DS89,DZ98,Str84}).

There is a natural path space large deviation result which accompanies Cram\'er's theorem, namely Mogulskii's theorem. It states that
in the same setting, the random paths $t\in [0,1]\mapsto S_n(t)$ where
$$
S_n(t)= \frac1n\sum_{i=1}^{\lfloor nt\rfloor} X_i
$$
satisfy the large deviation theorem in the sense
$$
\PP\left( S_n(\cdot) \approx \gamma \right)\approx e^{-n \cI (\gamma)}
$$
where
$$
\cI(\gamma)= \int_0^1 I(\dot{\gamma_t}) dt.
$$
Here, $\dot{\gamma_t}$ denotes the velocity of the path $\gamma$ at time $t$, and $I$ is the rate function
\eqref{iii}.
In the proof of this theorem, Cram\'er's theorem is the starting point because it gives the large deviations
for the finite-dimensional marginals of $S_n(\cdot)$.
To lift this result to the path space large deviations in the weakest topology, one can rely on the framework of projective limits
(Dawson-G\"artner theorem).
Finally, to pass to a stronger topology such as the uniform topology, one has to prove exponential tightness in the chosen topology.

On the other hand, once the large deviation principle
for the trajectories $S_n(\cdot)$ is obtained, one can of course re-obtain Cram\'er's theorem by putting $t=1$, and by applying the contraction principle.\\

It is a natural question to ask how in particular Cram\'er's large deviation theorem is generalized to the setting of a Riemannian manifold. The main obstacle is that the manifold itself has no additive structure, and therefore a random walk cannot be defined as an addition of independent increments. This problem can be tackled in the spirit of the paper of Jorgenson (\cite{Jor75}), by introducing an appropriate family $\{\mu_x\}_{x\in M}$ of probability measures (or equivalently, random variables $\{X_x\}_{x\in M}$) on the tangent spaces $T_xM$ at points $x\in M$. The summing of independent increments is then replaced by an iterative application of the exponential map to the increment on the tangent space of the point where the random walk has arrived. More precisely, the analogue of normalized sum $\frac1n \sum_{i=1}^n X_i$ is build via the recursion $A_0=x_0\in M$, and
\begin{equation}\label{rawa}
A_{i+1}= \Exp_{A_i}\left(\frac1n X_{i+1}\right)
\end{equation}
for $i=0,1,2,\ldots, n-1$. The random variable $A_n$ then takes values in $M$ and is the natural analogue of the empirical average $\frac1n \sum_{i=1}^n X_i$.
In the case of flat space, geodesics are straight lines, and as a consequence $\Exp_x(tv) = x + tv$. Therefore, in that case, $A_n = \frac1n\sum_{i=1}^nX_i$.
In general however, due to curvature, $A_n$ is a complicated function of the increments, which is not even permutation invariant.
Nevertheless, purely working via analogy, one can make a reasonable guess for what the large deviations of $A_n$ should look like.
Define first the analogue of the log-moment generating function:
$$
\Lambda_{\mu_x} (\lambda)= \log \EE\left(e^{\langle X_x, \lambda \rangle}\right)
$$
where $\lambda$ is now an element of the cotangent space at $x\in M$, denoted by $T^*_xM$.
By imposing appropriate invariance properties of the family $\{X_x\}_{x\in M}$, this function
satisfies
$$
\Lambda_x (\lambda)= \Lambda_y (\tau_{xy} \lambda)
$$
where $\tau_{xy}$ denotes parallel transport of the form $\lambda\in T^*_xM$ to $T^*_yM$ along any smooth curve connecting $x$ and $y$.
The  natural candidate rate function is then $I_M(x)$ (the subindex $M$ referring to the manifold)
$$
I_M(x)=\inf_{v\in \cV(x_0,x)} \sup_{\lambda\in T^*_p M} \left( \langle v,\lambda \rangle \right)-\Lambda_p (\lambda)
$$
where the first infimum is over the set of all initial velocities of geodesics leading from $p$ to $x$ in time $1$.
Notice that in the case of flat space, with $p=0$, the only possible $v$ is precisely $x$, which is the speed
of the geodesic -a straight line- leading from $0$ to $x$ in time $1$, so 
in that case $I_M(x)$ coincides with \eqref{iii}.

One of the results of our paper is that $I_M(x)$ is indeed the correct rate function for the large deviations of the averages $A_n$.
Somewhat surprisingly, in order to obtain this result, one first needs the analogue of Mogulskii's theorem, which in turn can be obtained
by the robust method of path-space large deviations for sequences of Markov processes from \cite{FK06} (here named ``the Feng-Kurtz formalism'').
Indeed, the recursion
defines a discrete-time Markov process which has an $n$-dependent transition operator, which puts us precisely in the realm of
\cite{FK06}.\\

Additionally, we show that the Feng-Kurtz method can be used to given a new proof of Schilder's theorem for Riemannian Brownian motion. In \cite{Var67}, Varadhan studied the short time behaviour of the heat kernel associated to Riemannian Brownian motion and proved that
$$
\lim_{t\to0} t\log p_t(x,y) =  -\frac{d(x,y)^2}{2}
$$
where $d$ is the Riemannian distance on the manifold. Afterwards, analogues of Schilder's theorem for Euclidean Brownian motion, the large deviations for Riemannian Brownian motion have been studied, and can be found in e.g. \cite{Aze78,FW12}.\\

To our knowledge, the generalizations of Mogulskii's and Cram\'er's theorem to geodesic random walks on a Riemannian manifold are new results. Additionally, the approach of using the Feng-Kurtz formalism in the setting of Riemannian geometry is novel and of independent interest.

The paper is organised as follows. In Section \ref{section:Notation} we introduce some basic notions from differential geometry, as well as from large deviations theory. In Section \ref{section:Brownianmotion} we provide a review of the construction of Riemannian Brownian motion and collect important results we need in what follows. In Section \ref{section:geodesicRW} we introduce the geodesic random walks we need for the analogue of Mogulskii's and Cram\'er's theorem. Then, in Section \ref{section:mainresults} we state the main large deviation results. A new proof of Schilder's theorem using embeddings can be found in Section \ref{section:Schilderembedding}. In Section \ref{section:abstract_HJ_to_LDP} we introduce the Feng-Kurtz formalism and show how this is applicable in the Riemannian setting. Finally, with the main work done, in Section \ref{section:resultsFK} we provide the proofs of the theorems stated in Section \ref{section:mainresults} using this Feng-Kurtz formalism.

\section{Notation and important notions}\label{section:Notation}

In this section we will introduce some basic notation and concepts from differential geometry (see e.g. \cite{Lee97,Spi79}), as well as the definition of the large deviation principle (see e.g. \cite{DZ98}). We conclude the section by introducing Freidlin-Wentzell theory in Euclidean space.

\subsection{Some differential geometry}\label{subsection:geometry}

Throughout this paper we work in a complete Riemannian manifold $(M,g)$ of dimension $k$.  We denote by $TM$ the tangent bundle, and by $T^*M$ the cotangent bundle. By $\Gamma(TM)$ we denote the vector fields, i.e., the smooth sections of $TM$. With the idea of studying Hamiltonians in mind, we reserve $p$ for elements of $T^*M$ as momentum is a cotangent vector. Tangent vectors are generally denoted $V$ and we write $x$ for points in $M$. 

\subsubsection{Riemannian inner product and distance}

For $x \in M$, if $V \in T_xM$ and $p \in T_x^*M$, we denote the duality pairing by $\inp{V}{p}$ or $p(V)$. The inner product of two elements $V,W \in T_xM$ is denoted by $\inp{V}{W}_{g(x)}$ and the length $|V|_{g(x)}$ of $V$ is defined as 
$$
|V|_{g(x)} = \sqrt{\inp{V}{V}_{g(x)}}.
$$

The \emph{Riemannian distance} on $M$ is defined as
$$
d(x,y) := \inf\left\{\int_0^1 |\dot\gamma(t)|_{g(\gamma(t))}\ud t \, \middle| \, \gamma:[0,1] \to M, \gamma(0) = x, \gamma(1) = y, \gamma \mbox{ piecewise smooth}\right\}.
$$

\smallskip

We define the inner product, and consequently the length, of cotangent vectors using duality via the metric. For $p \in T_x^*M$, we will denote its length by $|p|_{g(x)}$. We will omit the point $x$ in the notation whenever this is clear. In a coordinate chart, we denote the coefficient matrix of the metric by $G = (g_{ij})$. The coefficient matrix of the metric on cotangent vectors is then given by $G^{-1} = (g^{ij})$.

\subsubsection{Connection and parallel transport}

We assume that our Riemannian manifold is equipped with the \emph{Levi-Civita connection} $\Nabla$, i.e., the unique connection which is compatible with the metric and torsion free. 

A vector field $V \in \Gamma(TM)$ is parallel along a curve $\gamma:[a,b] \to M$ if $\Nabla_{\dot\gamma(t)}V(\gamma(t)) = 0$ for all $t \in [a,b]$. A curve $\gamma$ is called a geodesic, if the vector field $\dot\gamma(t)$ is parallel along $\gamma$. It turns out that paths of minimal length between points are geodesic for the Levi-Civita connection.

A connection induces the notion of \emph{parallel transport}. Given a (piecewise) smooth curve $\gamma:[a,b] \to M$, we denote parallel transport along $\gamma$ from $\gamma(t_0)$ to $\gamma(t_1)$ by $\tau_{\gamma,t_0t_1}$, or simply $\tau_{t_0t_1}$ whenever the meant curve is clear. If points $x,y \in M$ can be connected by a unique geodesic of minimal length, we will also write $\tau_{xy}$ meaning parallel transport from $x$ to $y$ along this specific geodesic. To define parallel transport for cotangent vectors, one can use the duality between the tangent space and cotangent space. In particular, if $p \in T_x^*M$ and $V \in T_yM$, then $\tau_{xy}p(V) = p(\tau_{xy}^{-1}V)$. This also characterizes $\tau_{xy}p$ if this is satisfied for all $V \in T_yM$.

The following proposition is used in proving the generalization of Mogulskii's theorem. For a proof, we refer to Appendix \ref{section:proofs_propositions_lemmas}.

\begin{proposition}\label{proposition:transport_derivative_distance}
Let $x,y \in M$ and assume that $x \notin C_y$ (or equivalently, $y \notin C_x)$. Then for all $V \in T_yM$ we have
$$
\dd_yd^2(x,y)(V) = 2\inp{\dot\gamma(1)}{V}_{g(y)},
$$
where $\gamma:[0,1]\to M$ is the unique geodesic of minimal length connecting $x$ and $y$. Consequently, we obtain
$$
\tau_{xy}\dd_xd^2(x,y) = -\dd_yd^2(x,y).
$$
\end{proposition}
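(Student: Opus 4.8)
The plan is to recognise $\tfrac12 d^2(x,\cdot)$ in a neighbourhood of $y$ as the energy of the minimal geodesic running to the moving endpoint, and then to invoke the classical first variation formula for the energy functional.

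First I would exploit the hypothesis. Since $x \notin C_y$ --- equivalently $y \notin C_x$ --- there is an open neighbourhood $U$ of $y$ contained in $M \setminus C_x$ on which $z \mapsto d^2(x,z)$ is smooth, given by $d^2(x,z) = |\Exp_x^{-1}(z)|^2_{g(x)}$, and on which the unique minimal geodesic from $x$ to $z$, parametrised on $[0,1]$, is $\gamma_z(t) = \Exp_x\bigl(t\,\Exp_x^{-1}(z)\bigr)$, depending smoothly on $z$. This regularity, which is exactly what the cut-locus condition provides, is the only non-routine input; everything afterwards is the standard first-variation computation. Having it, fix $V \in T_yM$ and a smooth curve $c:(-\epsilon,\epsilon)\to U$ with $c(0)=y$ and $\dot c(0)=V$. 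Then $\Gamma(s,t):=\gamma_{c(s)}(t)$ is a smooth variation of $\gamma:=\gamma_y$ through constant-speed geodesics, with $\Gamma(s,0)=x$ for all $s$ and variation field $J(t)=\partial_s\Gamma(0,t)$ satisfying $J(0)=0$ and $J(1)=V$. Since each $\gamma_{c(s)}$ is the constant-speed minimal geodesic from $x$ to $c(s)$ on $[0,1]$, its energy equals $\tfrac12 d^2(x,c(s))$.

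Next I would apply the first variation formula for $E(\sigma)=\tfrac12\int_0^1|\dot\sigma(t)|^2_{g(\sigma(t))}\ud t$ to the variation $\Gamma$: the interior term vanishes because $\Nabla_{\dot\gamma}\dot\gamma = 0$, and the boundary term at $t=0$ vanishes because $J(0)=0$, leaving
$$
\tfrac12 \frac{\dd}{\dd s}\Big|_{s=0} d^2(x,c(s)) \;=\; \inp{\dot\gamma(1)}{V}_{g(y)}.
$$
The left-hand side is $\tfrac12\,\dd_y d^2(x,y)(V)$, which is the first assertion, as $V$ was arbitrary.

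For the consequence I would apply the first assertion with the roles of $x$ and $y$ interchanged, noting $d^2(x,y)=d^2(y,x)$: the minimal geodesic from $y$ to $x$ is $t\mapsto\gamma(1-t)$, whose velocity at time $1$ is $-\dot\gamma(0)$, so $\dd_x d^2(x,y)(W) = -2\inp{\dot\gamma(0)}{W}_{g(x)}$ for every $W\in T_xM$. Then, using the defining relation $\tau_{xy}p(V)=p(\tau_{xy}^{-1}V)$, the fact that $\tau_{xy}$ is a linear isometry along $\gamma$, and the fact that $\dot\gamma$ is parallel along $\gamma$ (hence $\tau_{xy}\dot\gamma(0)=\dot\gamma(1)$), one computes for every $V\in T_yM$
$$
\bigl(\tau_{xy}\dd_x d^2(x,y)\bigr)(V) = \dd_x d^2(x,y)(\tau_{xy}^{-1}V) = -2\inp{\dot\gamma(0)}{\tau_{xy}^{-1}V}_{g(x)} = -2\inp{\dot\gamma(1)}{V}_{g(y)} = -\dd_y d^2(x,y)(V),
$$
which yields the claimed equality of covectors. \textbf{The main obstacle} is precisely the smooth dependence of the minimal geodesic, and of $d^2(x,\cdot)$, on the endpoint $y$; this is guaranteed by $x\notin C_y$ together with standard properties of the cut locus, and once it is available the remainder is a routine application of the first variation of energy.
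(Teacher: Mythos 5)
Your proof is correct, and it reaches the stated result by the same underlying mechanism as the paper — differentiating the optimal value of the energy/action functional with respect to the endpoint — but the execution differs. The paper's appendix argues via the Hamilton–Jacobi identity ``derivative of the action equals the momentum at the endpoint,'' quoted from Arnold and worked out in coordinates; you instead construct an explicit variation $\Gamma(s,t)=\Exp_x\bigl(t\,\Exp_x^{-1}(c(s))\bigr)$ through minimal geodesics and invoke the first variation of energy, letting the interior term drop because $\gamma$ is a geodesic and the $t=0$ boundary term drop because $J(0)=0$. This coordinate-free route is a little longer but more self-contained, and it makes the sign bookkeeping transparent. That matters here: your computation gives $\dd_y d^2(x,y)=2(\dot\gamma(1))^\#$ and $\dd_x d^2(x,y)=-2(\dot\gamma(0))^\#$, which agrees with the proposition as stated, whereas the paper's appendix proof — having written ``the differential of the action in the starting point equals the momentum at $0$'' with the wrong sign — ends with $\dd_y d^2(x,y) = -2(\dot\gamma(1))^\#$, contradicting the proposition's own display. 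So your argument not only proves the claim but corrects a sign slip in the reference proof. Your handling of the second assertion, via $\tau_{xy}p(V)=p(\tau_{xy}^{-1}V)$, the isometry property of parallel transport, and $\tau_{xy}\dot\gamma(0)=\dot\gamma(1)$, is also carried out more explicitly than the paper's one-line remark, which is a small but genuine improvement in rigor.
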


\subsubsection{Exponential map, injectivity radius and cutlocus}

Given $x \in M$, for every $V \in T_xM$, let $\gamma_V$ be the geodesic starting at $\gamma_V(0) = x$, with initial velocity $V$. By the completeness assumption on $M$, this geodesic exists for all times $t > 0$. We define the exponential map $\Exp_p: T_pM \to M$ by setting $\Exp_x(V) = \gamma_V(1)$. A geodesic ball $B(x,\delta)$ of radius $\delta > 0$ in $M$ is the image of $\{V \in T_xM \, | \, |V|_{g(x)} < \delta\}$ under $\Exp_p$. A geodesic sphere of radius $\delta > 0$ is the image of $\{V \in T_xM \, | \, |V|_{g(x)} = \delta\}$.

For a point $x \in M$, we define the \emph{injectivity radius} at $x$ to be
$$
i(x) := \sup\left\{\delta \in \RR_+ \, \middle| \, \Exp_x \mbox{ is injective on } B_x(0,\delta)\right\},
$$
where $B_x(0,\delta) \subseteq T_xM$ is the ball of radius $\delta$ with respect to $g(x)$. The existence of normal coordinates around $x$ assures that $i(x) > 0$. Observe that on $\Exp_x(B_x(0,i(x)))$ the map $y \mapsto d^2(x,y)$ is smooth.  We define the injectivity radius of the manifold to be
$$
i(M) := \inf_{x\in M} i(x)
$$
\begin{proposition}[\cite{Kli82}]\label{prop:injectivityradius}
The injectivity radius $i(x)$ depends continuously on $x$. In particular, if $M$ is compact we have $i(M) > 0$.
\end{proposition}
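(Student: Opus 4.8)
The plan is to follow the classical route through the cut locus and Klingenberg's structural lemma. For a unit tangent vector $v \in T_xM$ (i.e. $|v|_{g(x)} = 1$), set
$$\rho(v) := \sup\{t > 0 \mid d(x,\Exp_x(tv)) = t\} \in (0,\infty],$$
the \emph{cut time} along the geodesic $\gamma_v$. I would start by recalling the classical fact that $i(x)$ coincides with the distance $d(x,C_x)$ from $x$ to its cut locus, and that $d(x,C_x) = \inf\{\rho(v) \mid v \in T_xM,\ |v|_{g(x)} = 1\}$. Thus it suffices to prove that $x \mapsto \inf_{|v|=1}\rho(v)$ is continuous, and then to invoke compactness of $M$ and of the unit fibres.

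The core technical input is the continuity of $\rho$ as a function on the unit sphere bundle of $M$. Upper semicontinuity is immediate: if $\rho(v_0) < T$ then $d(x_0,\Exp_{x_0}(Tv_0)) < T$, an open condition in the pair $(x,v)$ by continuity of $d$ and of the exponential map, so $\rho < T$ for all nearby unit vectors. Lower semicontinuity is the heart of the matter and rests on Klingenberg's lemma: whenever $\rho(v) = t_0 < \infty$, the cut point $q = \Exp_x(t_0 v)$ is either the first point conjugate to $x$ along $\gamma_v$, or is joined to $x$ by at least two distinct minimizing geodesics. Given unit vectors $v_n \to v$ with $\rho(v_n) \to \ell$, I would pass to a subsequence on which the same alternative holds for all $n$, and let $n \to \infty$ using smooth dependence of geodesics and of Jacobi fields on initial conditions; one then finds that $\gamma_v$ fails to be minimizing at or before time $\ell$ — either $\gamma_v(\ell)$ is conjugate to $x$, or it is reached by a second minimizing geodesic, or, in the degenerate case where the two minimizing geodesics collapse in the limit, $\gamma_v(\ell)$ is again conjugate to $x$ because $\dd\Exp_x$ becomes singular at $\ell v$. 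In every case $\rho(v) \le \ell$, which contradicts $\ell < \rho(v)$; one must also note $\ell > 0$, since $\Exp_x$ is a diffeomorphism near the origin locally uniformly in $x$, so short minimizing geodesics are unique and free of conjugate points. This gives $\liminf_{v' \to v}\rho(v') \ge \rho(v)$, handling both $\rho(v) < \infty$ and $\rho(v) = \infty$.

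With $\rho$ continuous on the unit sphere bundle, continuity of $i(x) = \inf\{\rho(v) \mid v \in T_xM,\ |v|_{g(x)} = 1\}$ follows by a routine fibre-compactness argument: for $x_n \to x$, choosing near-minimizing directions $v_n$ over $x_n$ and extracting a subsequence $v_n \to v$ over $x$ gives $\liminf_n i(x_n) \ge \rho(v) \ge i(x)$, while choosing a direction realizing $i(x)$ and perturbing it into the nearby fibres gives $\limsup_n i(x_n) \le i(x)$. Finally, if $M$ is compact then $i$ is continuous on a compact space, hence attains a minimum; since $i(x) > 0$ for every $x \in M$ by the existence of normal coordinates noted above, we conclude $i(M) = \min_{x \in M} i(x) > 0$.

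I expect the main obstacle to be the lower semicontinuity of $\rho$ — equivalently, invoking Klingenberg's lemma and pushing the limiting argument through. The delicate points are verifying that no curvature-driven degeneration occurs (conjugate times bounded away from $0$, minimizing geodesic pairs not collapsing to the constant path), which one controls by comparison estimates on a fixed compact neighbourhood and by the case analysis above. Everything else — the identity $i(x) = d(x,C_x)$, the fibrewise compactness step, and the deduction $i(M) > 0$ in the compact case — is routine.
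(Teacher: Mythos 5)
The paper does not prove this proposition itself; it simply cites Klingenberg \cite{Kli82}. Your argument---identifying $i(x)$ with the infimum of the cut-time function $\rho$ over the unit sphere fibre, proving continuity of $\rho$ on the unit sphere bundle via Klingenberg's lemma (upper semicontinuity being the easy open condition, lower semicontinuity via the conjugate-point/double-geodesic dichotomy and a limiting argument), and then deducing $i(M)>0$ by compactness---is exactly the classical proof found in that reference, and it is correct.
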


The injectivity radius is closely related to the cutlocus of a point $x \in M$. For any $x \in M$ we define the \emph{cutlocus} $C_x$ to be the set of all point $y \in M$ for which there is more than one geodesic of minimal length connecting $x$ and $y$. 

\subsubsection{Curvature}

The \emph{Riemann curvature endomorphism} measures to what extent second order covariant derivatives of a vector field commute. It is the map $R: TM \times TM \times TM \to TM$ given by
$$
R(X,Y)Z = \Nabla_X\Nabla_YZ - \Nabla_Y\Nabla_XZ - \Nabla_{[X,Y]}Z,
$$
where $[X,Y] = XY - YX$ is the commutator of $X$ and $Y$. Associated to this is the \emph{Riemann curvature tensor}, which is the 4-tensor $\Rm$
$$
\Rm(X,Y,Z,W) = \inp{R(X,Y)Z}{W}.
$$
Finally, by taking the trace of the curvature tensor with respect to the first and last entry, we obtain a 2-tensor which we will call the Ricci-curvature, denote by $\Ric$.

\subsubsection{Function spaces}

We denote the space of continuous functions on $M$ by $C(M)$ and the space of bounded continuous functions by $C_b(M)$. The smooth functions are indicated by $C^\infty(M)$, whereas the space of smooth functions that are constant outside of a compact set are denoted by $C^\infty_c(M)$.

The set of continuous curves on an interval $I \subseteq \bR^+$ is denoted by by $C(I;M)$. Spaces of continuous functions and curves are considered to carry the topology of uniform convergence. We denote the Skorokhod space of cádlág paths by $D(I;M)$, see \cite[Section 3.5]{EK86}. Finally, we define the space $H^1(I,M)$ by
$$
H^1(I;M) := \left\{\gamma:I \to M|\gamma \mbox{ is differentiable a.e. and } \int_I |\dot\gamma(t)|_g^2 \ud t < \infty\right\}
$$
with norm given by
$$
||\gamma||_{H^1} = \int_I |\dot\gamma(t)|^2_g\ud t.
$$

Finally, we denote by $\cA\cC(I;M)$ the set of absolutely continuous curves in $M$, i.e. the set of continuous curves $\gamma : I \rightarrow M$ that differentiable for almost every point in $I$ and such that for all $f \in C^\infty(M)$ and $(s,t) \subseteq I$:
\begin{gather*}
\int_s^t |\ip{\dd f(\gamma(r))}{\dot{\gamma}(r)}| \dd r < \infty, \\
f(\gamma(t)) - f(\gamma(s))  = \int_s^t \ip{\dd f(\gamma(r))}{\dot{\gamma}(r)} \dd r.
\end{gather*}

If we consider only curves with some fixed initial point $x \in M$, we write $C_x(I;M), H^1_x(I;M)$ and $\cA\cC_x(I;M)$ to indicate this.

\subsection{Large deviation principle}\label{subsection:LDP}

Large deviation principles control the limiting behaviour on the exponential scale of a sequence $\mu_n$ of probability measures on some state space $\Omega$. This limiting behaviour is governed by a rate function $I$, which is a lower semi-continuous function from $\Omega$ into $[0,\infty]$. We say that $I$ is a good rate function if its  sublevel sets $\{\omega \, | \, I(\omega) \leq c\}$ are compact. 

\begin{definition}
	Consider a sequence of measures $\{\mu_n\}_{n\geq 1}$ on $\Omega$.
	
	\begin{enumerate}[(a)]
		\item We say that the sequence $\{\mu_n\}_{n\geq 1}$  is \textit{exponentially tight} if for all $\alpha > 0$ there exists a compact $K_\alpha \subseteq \Omega$ such that
		\begin{equation*}
		\limsup_{n \to \infty} \frac1n \log\mu_n(K_\alpha^c) < -\alpha.
		\end{equation*}
		\item We say that the sequence $\{\mu_n\}_{n\geq 1}$ satisfies the \textit{large deviation principle} (LDP) on $\Omega$ with rate function $I : \Omega \rightarrow [0,\infty]$ if it satisfies
		\begin{enumerate}[(i)]
			\item \textit{The upper bound}; for every closed set $F \subseteq \Omega$,
			$$
			\limsup_{n\to\infty} \frac1n\log\mu_n(F) \leq -\inf_{\omega \in F} I(\omega).
			$$
			\item \textit{The lower bound}; For every open set $G \subseteq \Omega$,
			$$
			\liminf_{n\to\infty} \frac1n\log\mu_n(G) \geq -\inf_{\omega \in G} I(\omega).
			$$
		\end{enumerate}
	\end{enumerate}
Depending on the situation, we will also write $\epsilon$ instead of $\frac 1n$ and let $\epsilon$ tend to 0.
\end{definition}

\subsection{Freidlin-Wentzell theory}\label{subsection:FreidlinWentzell}


We conclude this introductory section with a short discussion of Freidlin-Wentzell theory in Euclidean space, which we will use in the proof of Schilder's theorem for Riemannian Brownian motion. The theory of Freidlin and Wentzell is concerned with LDPs for solutions $X_t^\epsilon$ of stochastic differential equations of the form
\begin{equation}\label{eq:FW}
\dd X_t^\epsilon = b(X_t^\epsilon)\dd t + \sqrt{\epsilon}\sigma(X_t^\epsilon)\dd W_t
\end{equation}
where $W_t$ is a $\RR^l$-valued Brownian motion and $b:\RR^k \to \RR^k$ and $\sigma:\RR^k \to \RR^{k\times l}$. We have the following theorem (see \cite[Theorem 5.6.7]{DZ98}, (combined with \cite[Theorem 5(a)]{Big04}).

\begin{theorem}[Freidlin-Wentzell]\label{theorem:FWIto}
Assume that $\{X_0^\epsilon\}$ satisfies the LDP in $\RR^k$ with good rate function $I_0$. If the entries of $b$ and $\sigma$ are bounded, Lipschitz continuous, then the solution $X_t^\epsilon$ of $\eqref{eq:FW}$ satisfies the LDP in $C(\RR^+;\RR^k)$ with the good rate function
\begin{equation}\label{equation:RFFW}
\begin{split}
I(f) = \inf\Bigg\{I_0(f(0)) + \frac12\int_0^\infty |\dot g(t)|^2\ud t \Bigg| g \in \cA\cC(\RR^+;\RR^k), \qquad \qquad \\f(t) = x + \int_0^t b(f(s))\ud s + \int_0^t \sigma(f(s))\dot g(s)\ud s\Bigg\}
\end{split}
\end{equation}
\end{theorem}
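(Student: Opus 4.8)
The plan is to deduce the theorem from Schilder's theorem for the rescaled driving noise $\sqrt\epsilon W$ on $C([0,T];\RR^l)$ by an approximate contraction principle (the Azencott--Freidlin--Wentzell scheme), first on a finite horizon $[0,T]$ with a deterministic start $x$, and then restoring the random initial condition and extending to $\RR^+$. For $g \in \cA\cC([0,T];\RR^l)$ with $\int_0^T |\dot g(t)|^2\ud t < \infty$, let $\cF_x(g) \in C([0,T];\RR^k)$ be the unique solution of
$$
f(t) = x + \int_0^t b(f(s))\ud s + \int_0^t \sigma(f(s))\dot g(s)\ud s,
$$
which exists and is unique by the boundedness and Lipschitz hypotheses on $b,\sigma$ via Gr\"onwall's inequality. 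First I would check that on each ball $\{g : \int_0^T |\dot g|^2 \le R\}$ the map $g \mapsto \cF_x(g)$ is continuous for the uniform topology (equivalently for weak $H^1$ convergence, which agrees with uniform convergence there), again by Gr\"onwall, and that $(x,g) \mapsto \cF_x(g)$ is jointly continuous.

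\textbf{Step 2: the exponential approximation (main obstacle).} Since the It\^o map $W \mapsto X^\epsilon$ is not uniformly continuous, the ordinary contraction principle does not apply, and the technical core is the Azencott estimate: for every $R<\infty$ and $\rho>0$,
$$
\lim_{\delta\downarrow 0}\ \limsup_{\epsilon\downarrow 0}\ \epsilon\log \sup_{\int_0^T |\dot g|^2 \le R}\PP\Big(\|X^\epsilon - \cF_x(g)\|_{\infty,[0,T]}\ge\rho,\ \|\sqrt\epsilon W - g\|_{\infty,[0,T]}<\delta\Big) = -\infty .
$$
I would prove this by splitting $X^\epsilon - \cF_x(g)$ into a drift part, handled by Gr\"onwall using the Lipschitz bound on $b$, and a noise part $\int_0^t \sigma(X^\epsilon_s)\,(\sqrt\epsilon \ud W_s - \dot g_s \ud s)$; integrating by parts, on the event $\{\|\sqrt\epsilon W - g\|_\infty < \delta\}$ the latter is controlled by a supremum of a stochastic integral, to which one applies an exponential Bernstein-type martingale inequality, using that $\sigma$ is bounded and Lipschitz. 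A localization/stopping-time argument then upgrades these bounds to the stated super-exponential decay. This is the step that carries all the quantitative weight.

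\textbf{Step 3: assembling the LDP.} Given Steps 1--2, the inverse-contraction-under-exponentially-good-approximation lemma (see \cite[Section 5.6]{DZ98}) transfers Schilder's LDP to an LDP for $X^\epsilon$ on $C([0,T];\RR^k)$ with rate $f \mapsto \inf\{\tfrac12 \int_0^T |\dot g|^2 : \cF_x(g) = f\}$. To incorporate the random initial condition, I would use that $X_0^\epsilon$ is independent of $W$, so $(X_0^\epsilon,\sqrt\epsilon W)$ satisfies the LDP with rate $I_0(y) + \tfrac12 \int_0^T |\dot g|^2$; re-running the approximation with the jointly continuous map $(y,g) \mapsto \cF_y(g)$, and combining the two principles via \cite[Theorem 5(a)]{Big04}, yields the LDP on $C([0,T];\RR^k)$ with the stated rate $I$. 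Since $C(\RR^+;\RR^k)$ carries the topology of uniform convergence on compacts, I pass to $\RR^+$ by the Dawson--G\"artner projective limit theorem. Finally, I check that $I$ is good: $I(f) \le c$ bounds $\int_0^\infty |\dot g|^2$, hence (as $\sigma$ is bounded, by Cauchy--Schwarz) gives uniform $\tfrac12$-H\"older bounds for $f$ on compacts, so the sublevel sets are precompact by Arzel\`a--Ascoli, while lower semicontinuity follows from that of the Schilder rate together with the continuity of $\cF$.
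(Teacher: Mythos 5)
The paper does not prove this theorem itself: it is stated as a citation of \cite[Theorem~5.6.7]{DZ98} together with \cite[Theorem~5(a)]{Big04}, and only the Stratonovich variant (Theorem~\ref{thm:FW}) is proved in the paper, taking this Itô version as given. Your proposal faithfully reconstructs the standard proof underlying that citation --- Schilder's theorem for the rescaled driving noise, the skeleton map and its continuity on energy balls, the Azencott exponentially-good-approximation estimate to bypass the discontinuity of the Itô map, Biggins' theorem to incorporate the independent random initial condition, and a projective-limit passage from $[0,T]$ to $\RR^+$ --- so it is correct and follows essentially the same route as the reference the paper relies on.
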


As on manifolds we will be working with Stratonovich stochastic differential equations instead of the It\^o ones as above, we need the following adjustment of the theorem. The proof can be found in Appendix \ref{section:proofs_propositions_lemmas}.

\begin{theorem}[Freidlin-Wentzell, Stratonovich SDE]\label{thm:FW}
Let $W_t$ be an $\RR^l$ valued standard Brownian motion. Let $b:\RR^k \to \RR^k$ and $\sigma:\RR^k \to \RR^{k\times l}$ be bounded, Lipschitz continuous functions. Assume that for any $\epsilon > 0$ the process $Y_t^\epsilon$ satisfies the Stratonovich stochastic differential equation
\begin{equation}\label{eq:SDEFW}
\dd Y_t^\epsilon = b(Y_t^\epsilon)\ud t + \sqrt{\epsilon}\sigma(Y_t^\epsilon)\circ\dd W_t,
\end{equation}
and assume that $\{Y_0^\epsilon\}$ satisfies the LDP in $\RR^k$ with good rate function $I_0$. Then the trajectories of $Y_t^\epsilon$ satisfy the LDP in $C(\RR^+;\RR^k)$ with good rate function as in \eqref{equation:RFFW}.
\end{theorem}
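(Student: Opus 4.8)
The plan is to reduce the Stratonovich SDE to an Itô SDE by the standard Itô–Stratonovich conversion, and then to invoke Theorem~\ref{theorem:FWIto} for the resulting Itô equation, checking that the hypotheses are preserved and that the rate functions coincide. Concretely, writing $\sigma = (\sigma_{ij})$ with columns $\sigma_{\cdot j}$, the Stratonovich equation \eqref{eq:SDEFW} is equivalent to the Itô equation
\begin{equation*}
\dd Y_t^\epsilon = \tilde b_\epsilon(Y_t^\epsilon)\ud t + \sqrt\epsilon\, \sigma(Y_t^\epsilon)\ud W_t, \qquad \tilde b_\epsilon^i(y) = b^i(y) + \frac{\epsilon}{2}\sum_{j,k} \sigma_{kj}(y)\,\partial_k \sigma_{ij}(y).
\end{equation*}
The correction term carries a factor $\epsilon$, so it vanishes in the small-noise limit; this is the key structural point that makes the two rate functions agree.

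First I would address a technical wrinkle: Theorem~\ref{theorem:FWIto} as quoted has a \emph{fixed} drift $b$ and diffusion $\sigma$, whereas the Itô form above has an $\epsilon$-dependent drift $\tilde b_\epsilon$. So I would either (a) invoke a version of the Freidlin–Wentzell theorem that allows drifts $b_\epsilon \to b$ uniformly with uniformly bounded Lipschitz constants — this perturbation is harmless and is contained in the references cited — or (b) more self-containedly, compare $Y_t^\epsilon$ with the solution $\tilde Y_t^\epsilon$ of the Itô SDE with the \emph{fixed} drift $b$ and show the two families are exponentially equivalent (superexponentially close in the uniform norm on compact time intervals, at speed $\epsilon^{-1}$), using a Gronwall estimate together with a Fernique/Burkholder–Davis–Gundy type bound on $\sqrt\epsilon\int_0^t \frac\epsilon2(\ldots)\ud s$, whose contribution is of order $\epsilon^{3/2}$ and hence negligible on the exponential scale. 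Since the drift correction is deterministic and bounded by a constant times $\epsilon$, exponential equivalence is in fact immediate: $\sup_{t\le T}|Y_t^\epsilon - \tilde Y_t^\epsilon| \le C_T \epsilon$ by Gronwall (the diffusion coefficients are identical, and Lipschitzness of $\sigma$ controls the difference), which tends to $0$ deterministically, so certainly superexponentially. By the standard principle that exponentially equivalent families share the same LDP, $Y_t^\epsilon$ satisfies the LDP with the rate function produced by Theorem~\ref{theorem:FWIto} applied to $(b,\sigma)$.

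It remains to identify that rate function with \eqref{equation:RFFW}. Applying Theorem~\ref{theorem:FWIto} with drift $b$ (not $\tilde b_\epsilon$) and diffusion $\sigma$ yields precisely
\begin{equation*}
I(f) = \inf\left\{ I_0(f(0)) + \tfrac12\int_0^\infty |\dot g(t)|^2 \ud t \;\middle|\; g\in\cA\cC(\RR^+;\RR^k),\ f(t) = f(0) + \int_0^t b(f(s))\ud s + \int_0^t \sigma(f(s))\dot g(s)\ud s \right\},
\end{equation*}
which is exactly the claimed formula, since the $\epsilon/2$-correction disappears at the level of the skeleton equation (it is $O(\epsilon)$ and there is no $\sqrt\epsilon$-rescaled noise in the controlled ODE). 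Hence the Stratonovich and Itô small-noise LDPs have the same rate function, completing the proof.

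The main obstacle, such as it is, is the bookkeeping in step two: making rigorous that replacing $\tilde b_\epsilon$ by $b$ is legitimate. If one uses route (a) one must point to the precise statement in \cite{DZ98} or \cite{Big04} allowing an $\epsilon$-dependent drift; if one uses route (b) one must carefully state and apply the exponential-equivalence lemma (e.g. \cite[Theorem 4.2.13]{DZ98}) and verify its hypotheses. Everything else — the Itô–Stratonovich conversion formula, the Gronwall estimate, and the identification of the skeleton — is routine. I would write the proof along route (b), as it keeps the argument within the toolkit already introduced and avoids delicate citation of a more general Freidlin–Wentzell statement.
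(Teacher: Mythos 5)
Your reduction to It\^o form is the right first move, and you correctly identify that the drift correction carries an $\epsilon$-factor and therefore should vanish in the limit. But the claim that ``exponential equivalence is in fact immediate: $\sup_{t\le T}|Y_t^\epsilon - \tilde Y_t^\epsilon| \le C_T\epsilon$ by Gronwall (the diffusion coefficients are identical)'' contains a genuine gap. The diffusion coefficients of $Y^\epsilon$ and $\tilde Y^\epsilon$ are the \emph{same function} $\sigma$, but they are evaluated along \emph{different} trajectories, so the difference $Y_t^\epsilon - \tilde Y_t^\epsilon$ satisfies an SDE with a nontrivial martingale part $\sqrt\epsilon\int_0^t\bigl(\sigma(Y_s^\epsilon)-\sigma(\tilde Y_s^\epsilon)\bigr)\ud W_s$. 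A pathwise Gronwall bound cannot absorb this stochastic integral; you cannot conclude a deterministic $O(\epsilon)$ bound on the sup-norm difference, and the BDG-type estimate you gesture at is directed at the wrong term (it should control the martingale difference, not the deterministic drift correction). So the ``immediate'' argument does not go through.

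The paper's proof closes exactly this gap by a three-process decomposition. It introduces an intermediate process $Z^\epsilon$ driven by the \emph{same} stochastic integral as $Y^\epsilon$, namely $\dd Z_t^\epsilon = b(Y_t^\epsilon)\ud t + \sqrt\epsilon\sigma(Y_t^\epsilon)\ud W_t$, with $Z_0^\epsilon = Y_0^\epsilon$. Then $Z_t^\epsilon - Y_t^\epsilon = -\tfrac12\epsilon\int_0^t(D\sigma\cdot\sigma)(Y_s^\epsilon)\ud s$ is a genuinely \emph{pathwise deterministic} integral and is bounded by $CT\epsilon$. The remaining comparison of $\tilde Y^\epsilon$ and $Z^\epsilon$, where the stochastic integrands differ, is the probabilistic step, and the paper invokes \cite[Lemma 5.6.18]{DZ98}, which is designed precisely to convert Lipschitz control of the coefficient differences into superexponential smallness of the sup-norm gap. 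To repair your proof you should adopt this decomposition, or else replace the deterministic Gronwall claim by an honest exponential martingale / BDG argument for the stochastic integral of $\sigma(Y^\epsilon)-\sigma(\tilde Y^\epsilon)$; in its present form the core step of your route (b) is incorrect.
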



\section{Brownian motion on Riemannian manifolds}\label{section:Brownianmotion}

In this section, we give a concise review of the definition of Brownian motion on a Riemannian manifold, following \cite{Hsu02}. We go over two equivalent definitions which we need for the different approaches to proving Schilder's theorem in Sections \ref{section:Schilderembedding} and \ref{section:resultsFK}. We end with a short discussion on the behaviour of the radial process. Although this section has a review character, we made it as self-contained as possible. The reader familiar with the various definitions of Riemannian Brownian motion might skip this section.

\subsection{Generator approach to Brownian motion}

Denote by $\Delta_M$ the Laplace-Beltrami operator on $M$. On any coordinate chart $(x,U)$ we have
$$
\Delta_M = \frac{1}{\sqrt{\det G}}\frac{\partial}{\partial x^i}\left(\sqrt{\det G}g^{ij}\frac{\partial}{\partial x^j}\right).
$$

In the Euclidean case, the generator of Brownian motion is given by $\frac12\Delta$. This inspires the following definition.

\begin{definition}[Riemannian Brownian motion]
A continuous $M$-valued process $W_t$ is a \emph{Riemannian Brownian motion} if it is generated by $\frac12\Delta_M$, i.e., $W_t$ is such that for all $f \in C^\infty(M)$
$$
f(W_t) - f(W_0) - \frac12\int_0^t \Delta_Mf(W_s)\ud s 
$$
is a local martingale up to the explosion time of $W_t$. 
\end{definition}

Note that a priori there is no guarantee that Riemannian Brownian motion is defined for all times $t > 0$. It turns out that this relies on the geometry of $M$. We make the following definition.

\begin{definition}[Stochastic completeness]
We say that a Riemannian manifold is \emph{stochastically complete} if the explosion time of its Brownian motion is almost surely infinite. 
\end{definition}


The following proposition gives an important sufficient geometric condition for stochastic completeness (see e.g. \cite[Section 4.2]{Hsu02}).

\begin{proposition}
Assume there exists a finite constant $L$ such that $\Ric \geq L$. Then the manifold is stochastically complete.
\end{proposition}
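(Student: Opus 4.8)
The statement to prove is the sufficient condition for stochastic completeness: if $\Ric \geq L$ for some finite constant $L$, then Riemannian Brownian motion does not explode.

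The plan is to control the radial process $r_t = d(o, W_t)$ for a fixed reference point $o \in M$ and show it cannot reach infinity in finite time. By It\^o's formula applied to $f(W_t)$ for suitable $f$ — or more directly to the radial process — the key input is the Laplacian comparison theorem: the lower Ricci bound $\Ric \geq L$ yields an upper bound $\Delta_M r \leq C(r)$ away from the cut locus, where $C$ grows at most linearly (for $L \geq 0$ one gets $\Delta_M r \leq (k-1)/r$, and for $L < 0$ one gets something like $(k-1)\sqrt{-L/(k-1)}\coth(\sqrt{-L/(k-1)}\, r)$, which is bounded by an affine function of $r$). Since $|\nabla r| = 1$, the generator $\frac12\Delta_M$ acting on $r$ is bounded above by an affine function of $r$, so $r_t$ is dominated by a one-dimensional diffusion with bounded diffusion coefficient and affinely-growing (hence non-explosive) drift. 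A one-dimensional diffusion whose coefficients grow at most linearly does not explode in finite time, which gives the claim.

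The steps in order: first, fix $o$ and consider $r_t = d(o,W_t)$; away from $C_o \cup \{o\}$ write the It\^o–Dynkin decomposition $r_t = r_0 + \beta_t + \frac12\int_0^t \Delta_M r(W_s)\,\dd s$, where $\beta_t$ is a one-dimensional Brownian motion (using $|\nabla r|_g = 1$). Second, invoke the Laplacian comparison theorem under $\Ric \geq L$ to bound $\Delta_M r(W_s) \leq \phi(r_s)$ with $\phi$ affine; handle the cut locus and the origin either by Kendall's trick (the local time contribution from $C_o$ has the right sign) or by a smooth approximation of $r$ from above, so the inequality survives in the sense of comparison. Third, build a comparison one-dimensional SDE $\dd \rho_t = \dd\beta_t + \frac12\phi(\rho_t)\,\dd t$ with $\rho_0 = r_0$, conclude $r_t \leq \rho_t$ by a comparison theorem, and note $\rho_t$ is non-explosive since its drift is globally Lipschitz (affine) and the diffusion coefficient is constant — e.g. by Feller's test or directly by Gronwall. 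Fourth, conclude that $r_t$ stays finite for all $t$, so $W_t$ has infinite explosion time almost surely; since this review section mostly cites \cite{Hsu02}, I would likely just state the argument and refer to \cite[Section 4.2]{Hsu02} for full details.

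The main obstacle is the cut-locus issue: $r = d(o,\cdot)$ is not smooth on $C_o$, so the naive It\^o formula does not apply globally, and $\Delta_M r$ must be interpreted in the distributional (or barrier) sense, where it still satisfies the comparison bound with a non-positive singular part. The clean way around this is Kendall's method — approximating the radial process by a process with a reflecting-type correction at the cut locus, whose extra term only decreases $r_t$ — combined with the observation that Brownian motion spends zero time at $C_o$ (which has measure zero). Everything else (Laplacian comparison under a Ricci bound, non-explosion of affine one-dimensional diffusions, stochastic comparison) is standard, so the write-up would emphasize the radial comparison and point to \cite{Hsu02} for the technical handling of the cut locus.
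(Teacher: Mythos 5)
Your radial-comparison argument---It\^o decomposition of $r_t = d(o,W_t)$, Laplacian comparison under $\Ric \geq L$, Kendall's handling of the cut-locus local time (which only pushes $r_t$ down), and domination by a non-explosive one-dimensional diffusion---is precisely the proof the paper delegates to \cite[Section 4.2]{Hsu02} without reproducing it, so you have reconstructed the intended argument. The only minor nitpick is that the Laplacian comparison actually gives a drift bound that is \emph{bounded} as $r \to \infty$ (not just affine), which makes the non-explosion of the comparison diffusion even more immediate.
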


\subsection{Brownian motion as the solution of an SDE}\label{subsection:BMSDE}

We now turn to the approach of defining Riemannian Brownian motion by solving an appropriate SDE. For an introduction to SDEs on manifolds, we refer to appendix \ref{section:SDEmanifold}. In general, it is not possible to obtain Riemannian Brownian motion by solving an SDE on the manifold $M$ itself. The main issue here is that the Laplace-Beltrami operator $\Delta_M$ is not necessarily a sum of squares. Indeed, if $V_1,\ldots,V_l$ are vector fields on $M$, and $W$ is a $l$-dimensional Brownian motion, one can show that the solution of the Stratonovich stochastic differential equation
$$
\dd X_t = V_l\circ \dd W_t^l
$$
is generated by
$$
L = \sum_{i=1}^l V_i^2
$$ 
Consequently, the Laplace-Beltrami operator cannot generate processes of this type. In order to make this idea work, we need to go to the orthonormal frame bundle $OM$, see appendix \ref{section:framebundle}.\\

Let $H_1,\ldots, H_k \in \Gamma(TOM)$ be the fundamental horizontal vector fields and let $B$ be a $k$-dimensional Euclidean Brownian motion. Let $U_t$ be the process in $OM$ given by
\begin{equation}\label{eq:horBM}
\dd U_t = H_i(U_t)\circ \dd B_t^i, \qquad U_0 = u
\end{equation}
where $\pi u = W_0$. Here $\pi: OM \to M$ denotes projection. We have the following proposition (see \cite[Proposition 3.2.1]{Hsu02}):

\begin{proposition}
The process $W_t$ is Riemannian Brownian motion. 
\end{proposition}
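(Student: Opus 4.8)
The claim is that the process $W_t := \pi U_t$, where $U_t$ solves the horizontal Stratonovich SDE \eqref{eq:horBM} on the orthonormal frame bundle $OM$, is a Riemannian Brownian motion, i.e.\ that it is generated by $\tfrac12\Delta_M$. The plan is to show this by computing the generator of $U_t$ on $OM$, identifying it with Bochner's horizontal Laplacian $\Delta_{OM} = \sum_{i=1}^k H_i^2$, and then using the key intertwining identity $\Delta_{OM}(f\circ\pi) = (\Delta_M f)\circ\pi$ for $f \in C^\infty(M)$, which relates the horizontal Laplacian upstairs to the Laplace--Beltrami operator downstairs.

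First I would record that, since $U_t$ solves a Stratonovich SDE driven by the vector fields $H_1,\dots,H_k$ against a $k$-dimensional Euclidean Brownian motion $B$, the general fact quoted in Section~\ref{subsection:BMSDE} gives that $U_t$ is generated by $L_{OM} = \sum_{i=1}^k H_i^2 =: \Delta_{OM}$; that is, for every $F \in C^\infty(OM)$,
$$
F(U_t) - F(U_0) - \frac12\int_0^t \Delta_{OM}F(U_s)\ud s
$$
is a local martingale up to the explosion time. Next, given $f \in C^\infty(M)$, I would apply this with $F = f\circ\pi$. The heart of the argument is then the identity $\Delta_{OM}(f\circ\pi) = (\Delta_M f)\circ\pi$. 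This follows from the defining property of the fundamental horizontal vector fields: at a frame $u \in OM$ over $x = \pi u$, the vector $H_i(u)$ projects to $u e_i \in T_xM$ (the $i$-th element of the orthonormal basis determined by $u$), so $H_i(f\circ\pi)(u) = \dd f(ue_i)$, and applying $H_i$ a second time one picks up exactly the Hessian of $f$ evaluated on the orthonormal pair $(ue_i, ue_i)$ — the horizontality kills the connection terms that would otherwise appear — whence $\sum_i H_i^2(f\circ\pi)(u) = \sum_i \mathrm{Hess}f(ue_i, ue_i) = \Delta_M f(x)$, using that $\Delta_M = \mathrm{tr}_g\,\mathrm{Hess}$. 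Substituting, $F(U_t) - F(U_0) - \tfrac12\int_0^t (\Delta_M f)(\pi U_s)\ud s = f(W_t) - f(W_0) - \tfrac12\int_0^t \Delta_M f(W_s)\ud s$ is a local martingale up to the explosion time, which is precisely the martingale problem characterizing Riemannian Brownian motion; it also shows the explosion time of $W_t$ equals that of $U_t$.

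The main obstacle — and the step I would be most careful with — is the proof of the intertwining identity $\Delta_{OM}(f\circ\pi) = (\Delta_M f)\circ\pi$, i.e.\ the verification that the second horizontal derivative reproduces the Riemannian Hessian with no spurious lower-order terms. This is Bochner's horizontal Laplacian identity and rests on the precise definition of the $H_i$ via the connection on $OM$ (a horizontal curve through $u$ projects to a curve in $M$ along which the frame is parallel-transported). Since the excerpt defers the construction of $OM$ and the $H_i$ to an appendix, in the write-up I would either cite the relevant appendix/\cite[Ch.~3]{Hsu02} for this identity, or include a one-line coordinate-free derivation using that parallel transport along the projected curve has vanishing covariant derivative at the base point. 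Everything else — the passage from the SDE to the generator, and the translation of the generator statement into the martingale-problem definition — is routine.
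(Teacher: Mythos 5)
Your proposal is correct and reproduces the standard argument that the paper delegates to \cite[Proposition~3.2.1]{Hsu02}: identify the generator of $U_t$ with the horizontal Laplacian $\Delta_{OM} = \sum_i H_i^2$, verify the intertwining $\Delta_{OM}(f\circ\pi) = (\Delta_M f)\circ\pi$ via the defining property of the fundamental horizontal vector fields, and read off the martingale problem for $W_t = \pi U_t$. Since the paper gives no proof beyond the citation, there is no divergence to report.
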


%
%
\begin{remark}
Given a Brownian motion $W_t$ on $M$ (defined as the process generated by $\frac12\Delta_M$), there is a unique horizontal semimartingale $U_t$ in $OM$ such that $\pi U_t = W_t$. Furthermore, $U_t$ and $W_t$ have the same explosion times. 
\end{remark}

\subsection{Radial process of Brownian motion}

We conclude this section by studying the radial part of Brownian motion, i.e., the distance of Brownian motion to its starting point. If $W_t$ is Riemannian Brownian motion started at $W_0 = x_0 \in M$, then we define the \emph{radial process}
$$
R_t = d(W_t,x_0).
$$

The following result is Theorem 3.5.1 in \cite{Hsu02}.

\begin{theorem}
Let $W_t$ be a Riemannian Brownian motion started at some $x_0 \in M$. There exists a one-dimensional Euclidean Brownian motion $B_t$ and a nondecreasing process $L_t$, which only increases on the cutlocus of $x_0$, such that
$$
R_t = B_t + \frac12\int_0^t \Delta_Mr(W_s)\ud s - L_t
$$
for all $t$ less than the explosion time of $W$.
\end{theorem}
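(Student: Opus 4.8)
The statement is Theorem 3.5.1 in \cite{Hsu02}, so in practice one simply cites it; were one to reprove it, the plan would be as follows. Write $r(y) := d(x_0, y)$, so that $R_t = r(W_t)$, and work throughout up to the explosion time of $W$, localizing so that $W$ stays in a fixed relatively compact open set and all local martingales below are genuine martingales.

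First I would handle the region away from the bad set $\{x_0\}\cup C_{x_0}$. On $U := M \setminus (\{x_0\} \cup C_{x_0})$ the function $r$ is smooth with $|\nabla r|_{g} \equiv 1$ (first variation of arc length; the gradient of $r$ at $y$ is the unit tangent at $y$ to the minimizing geodesic from $x_0$). Since $C_{x_0}$ has zero Riemannian volume and $\{x_0\}$ is a single point, and since $W_s$ has a density with respect to $\vol$ for every $s>0$, Tonelli gives $\int_0^t \bONE_{M\setminus U}(W_s)\ud s = 0$ a.s., so $\int_0^t \Delta_M r(W_s)\ud s$ is unambiguous. Using $W_t = \pi(U_t)$ with $\dd U_t = H_i(U_t)\circ\dd B_t^i$ from Section~\ref{subsection:BMSDE}, It\^o's formula applied to $r$ on an excursion of $W$ inside $U$ gives $\dd r(W_t) = \langle \nabla r(W_t), U_t e_i\rangle\ud B_t^i + \tfrac12\Delta_M r(W_t)\ud t$; the martingale part $M_t$ has $\langle M\rangle_t = \int_0^t \sum_i\langle \nabla r(W_s), U_s e_i\rangle^2\ud s = \int_0^t |\nabla r(W_s)|_g^2\ud s = t$, because $\{U_s e_i\}_i$ is an orthonormal basis of $T_{W_s}M$. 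By L\'evy's criterion $B_t := M_t$ is a one-dimensional Brownian motion, and on such excursions $R_t = B_t + \tfrac12\int_0^t\Delta_M r(W_s)\ud s$, with no extra term.

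The heart of the matter is crossing $\{x_0\}\cup C_{x_0}$. The structural fact I would use is that $r$ admits a smooth \emph{upper} barrier at every $y_1 \neq x_0$: picking $z$ on a minimizing geodesic from $x_0$ to $y_1$ close enough that $y_1 \notin C_z$, the function $\rho(y) := d(x_0,z) + d(z,y)$ is smooth near $y_1$ and satisfies $\rho \geq r$ with equality at $y_1$ by the triangle inequality. Equivalently, $r$ is locally semiconcave on $M\setminus\{x_0\}$, so its distributional Laplacian has the form $(\Delta_M r)_{\mathrm{ac}}\,\vol - \nu$ with $\nu \geq 0$ a locally finite measure supported on $C_{x_0}$. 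Applying It\^o's formula to the barriers $\rho$ and patching the resulting one-sided estimates — a Tanaka/Meyer--It\^o argument, as carried out in \cite[Section~3.5]{Hsu02} — would show that $r(W_t)$ is a semimartingale whose finite-variation part is $\tfrac12\int_0^t\Delta_M r(W_s)\ud s - L_t$, where $L_t \geq 0$ is nondecreasing (because $\nu\geq 0$) and increases only while $W_s \in C_{x_0}$ (because, by the first part, the finite-variation part is purely absolutely continuous off $C_{x_0}$). At $x_0$ itself no singular contribution arises, since there $R$ is dominated by a Bessel-type process that spends zero time at $0$ and for which $\int_0^t\Delta_M r(W_s)\ud s$ is a.s. finite. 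Assembling the two regimes yields the claimed decomposition $R_t = B_t + \tfrac12\int_0^t\Delta_M r(W_s)\ud s - L_t$ for all $t$ below the explosion time.

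I expect the main obstacle to be precisely this middle step. The distance function is genuinely non-differentiable on the cutlocus, so It\^o's formula is unavailable there; one must exploit only the one-sided ($C^2$-from-above) regularity of $r$, run a Tanaka-type argument, and simultaneously verify that the singular push it produces is nonnegative and supported on $C_{x_0}$. This is Kendall's semimartingale property of the radial process, and it is where the real work lies — Step~1 and the reductions around it are routine by comparison.
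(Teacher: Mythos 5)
The paper does not prove this result; it simply cites it as Theorem~3.5.1 of Hsu's monograph, exactly as you note at the outset. Your sketch is a faithful reconstruction of Hsu's argument — It\^o's formula on $M\setminus(\{x_0\}\cup C_{x_0})$ where $r$ is smooth with $|\nabla r|_g\equiv 1$, L\'evy's characterization for the martingale part, the measure-zero/occupation-time observation making the drift integral unambiguous, and the semiconcavity (upper-barrier) Tanaka argument producing the nondecreasing singular term supported on $C_{x_0}$ — so your proposal takes essentially the same route as the cited source and is consistent with the paper's treatment.
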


Our aim is to control probabilities of the type $\PP(T_\delta \leq \tau)$, where $T_\delta$ is the exit time of the geodesic ball $B(x_0,\delta)$ of radius $\delta$ around $x_0$. We have the following proposition, which is an adaptation of Theorem 3.6.1 in \cite{Hsu02}. For a proof we refer to Appendix \ref{section:proofs_propositions_lemmas}. 

\begin{proposition}\label{prop:stopping}
Let $M$ be a complete Riemannian manifold of dimension $k$ and assume there exists an $L \geq 1$ such that the Ricci curvature is bounded from below by $-L$. Fix $p \in M$ and let $W_t$ be standard Riemannian Brownian motion started almost surely in $x_0$. Then for any $\delta > \sqrt{2 kL\tau}$ and any $\tau \geq 0$ it holds that
$$
\PP\left(\sup_{0\leq t \leq \tau} d(W_t,x_0) \geq \delta\right) \leq 2e^{-\frac12\frac{(kL\tau - \frac12\delta^2)^2}{\delta^2\tau}}.
$$ 
\end{proposition}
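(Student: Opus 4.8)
The plan is to exploit the comparison theorem stated just above: writing $R_t = d(W_t,x_0)$, we have the decomposition $R_t = B_t + \tfrac12\int_0^t \Delta_M r(W_s)\ud s - L_t$, where $B_t$ is a one-dimensional Brownian motion and $L_t$ is nondecreasing. The key geometric input is the Laplacian comparison theorem: under the lower Ricci bound $\Ric \geq -L$, one has $\Delta_M r \leq (k-1)\sqrt{L}\coth(\sqrt{L}\,r)$ in the sense of distributions away from the cutlocus, which in particular gives a bound of the form $\Delta_M r(W_s) \leq C(k,L)(1 + 1/R_s)$, and after a small additional argument (or by absorbing the singular $1/R_s$ term, which only helps push $R$ down) a clean linear-in-time drift bound. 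Concretely I would aim to show that for the exit time $T_\delta$ of the ball $B(x_0,\delta)$, the stopped process $R_{t\wedge T_\delta}$ is dominated by $B_t + kL\cdot t$ (the constant $kL$ absorbing the $(k-1)\sqrt L$ from the comparison theorem together with a crude bound on $\coth$; this is where matching the stated constants requires care). Since $L_t$ is nondecreasing it only decreases $R$, so it can be dropped from the upper bound.

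Having reduced to the one-dimensional estimate, the next step is:
$$
\PP\left(\sup_{0\leq t\leq \tau} R_t \geq \delta\right) \leq \PP\left(\sup_{0\leq t\leq\tau}\bigl(B_t + kLt\bigr)\geq \delta\right).
$$
For a Brownian motion with constant positive drift $\mu := kL$, the running maximum over $[0,\tau]$ satisfies the classical estimate $\PP(\sup_{0\leq t\leq\tau}(B_t+\mu t)\geq \delta) \leq \PP(B_\tau + \mu\tau \geq \delta) + \PP(\inf\dots)$ type bounds; cleaner is to use the exponential martingale $\exp(\lambda B_t - \tfrac12\lambda^2 t)$ and Doob's maximal inequality. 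Optimizing $\lambda$ in $\PP(\sup_t(B_t + \mu t)\geq\delta) \leq e^{-\lambda\delta}\,\E[e^{\lambda B_\tau + \lambda\mu\tau}] = e^{-\lambda\delta + \lambda\mu\tau + \tfrac12\lambda^2\tau}$ over $\lambda>0$ gives $\lambda = (\delta-\mu\tau)/\tau$, valid precisely when $\delta > \mu\tau = kL\tau$, yielding a bound $\exp\bigl(-\tfrac{(\delta - kL\tau)^2}{2\tau}\bigr)$. The stated form $2e^{-\frac12 (kL\tau - \frac12\delta^2)^2/(\delta^2\tau)}$ and the threshold $\delta > \sqrt{2kL\tau}$ suggest the authors instead run a time-changed or rescaled version of this argument (e.g.\ estimating the probability that a Brownian motion with drift exits through a moving boundary $\delta t/\tau$, or applying the reflection-type bound to $R_t^2$); the factor $2$ comes from a reflection principle and the $\tfrac12\delta^2$ inside from working with $R^2$ against a quadratic comparison. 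I would reconcile these by writing $R_{t}^2$ and applying Itô together with the comparison bound on $\Delta_M r^2 \leq 2k + 2\sqrt L\,r\coth(\sqrt L r)\cdot r$, getting a drift bound $\leq 2kL$ for $r\geq$ (unit scale), then applying the exponential martingale argument to $R_{t\wedge T_\delta}^2$.

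The main obstacle is the bookkeeping to land exactly on the stated constants: the interplay between (i) the precise form of the Laplacian comparison estimate and how the $\coth$ term and the singular $1/r$ term near $x_0$ are handled, (ii) whether one works with $R_t$ or $R_t^2$, and (iii) the choice of exponential martingale and optimization that produces the specific exponent $\tfrac12(kL\tau - \tfrac12\delta^2)^2/(\delta^2\tau)$ and the prefactor $2$. None of these steps is deep, but getting a self-consistent chain that yields *these* constants (rather than merely *some* Gaussian-type bound) is the delicate part, and it is the reason the statement is phrased as an adaptation of Theorem 3.6.1 in \cite{Hsu02} rather than a direct quotation. I would therefore closely follow Hsu's argument, inserting the Ricci lower bound in place of his hypotheses and re-tracking constants, and would not expect to need any genuinely new idea beyond careful estimation.
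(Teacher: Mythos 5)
Your sketch identifies the right geometric ingredients (the decomposition $R_t = B_t + \tfrac12\int_0^t\Delta_M r(W_s)\ud s - L_t$, dropping the nondecreasing $L_t$, and the Laplacian comparison bound) and you correctly infer from the form of the exponent that the argument must be run on $R^2$ rather than on $R$. But the final probabilistic step is left unresolved, and the candidate you commit to at the end (an exponential-martingale/Doob argument applied to $R_{t\wedge T_\delta}^2$) is not quite what works cleanly: after It\^o, the local-martingale part of $R_t^2$ is $2\int_0^t R_s\,\dd B_s$, whose quadratic variation $4\int_0^t R_s^2\,\dd s$ is \emph{random}, so $\exp(\lambda M_t - \tfrac12\lambda^2 t)$ is not a martingale and Doob's inequality does not apply directly. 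One would have to work with $\exp\bigl(\lambda M_t - 2\lambda^2\int_0^t R_s^2\,\dd s\bigr)$ and separately control the bracket, which is exactly the point where the paper's route is cleaner.

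What the paper actually does at this step: from It\^o on $R_t^2$, the Laplacian comparison bound $R_s\Delta_M R_s\le (k-1)(1+L)\le 2kL - 1$ (using $L\ge1$), and $[R,R]_t=t$, one gets on $\{T_\delta\le\tau\}$ that $\int_0^{T_\delta}R_s\,\dd B_s \ge \tfrac12\delta^2 - kL\tau$. Then the stochastic integral is written as a time-changed Brownian motion (Dambis--Dubins--Schwarz), $\int_0^{T_\delta}R_s\,\dd B_s = \tilde B_\eta$ with $\eta=\int_0^{T_\delta}R_s^2\,\dd s$. Since $R_s\le\delta$ before $T_\delta$, on $\{T_\delta\le\tau\}$ one has $\eta\le\delta^2\tau$, hence $\sup_{0\le s\le\delta^2\tau}\tilde B_s\ge \tfrac12\delta^2 - kL\tau$. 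The running maximum of standard Brownian motion up to time $\delta^2\tau$ has the same law as $\delta\sqrt{\tau}\,|\tilde B_1|$ by the reflection principle, and the Gaussian tail $\PP(|\tilde B_1|\ge\alpha)\le 2e^{-\alpha^2/2}$ gives exactly the stated bound, including the prefactor $2$ and the denominator $\delta^2\tau$. Your remark that ``the factor 2 comes from a reflection principle'' and that ``the authors instead run a time-changed version'' is thus the right instinct, but you do not carry it out, and your final paragraph reverts to the exponential martingale, which would require you to separately justify the deterministic bound on the bracket --- in effect replicating the time-change argument anyway. The missing idea is precisely this DDS time-change together with the deterministic bound $\eta\le\delta^2\tau$; once that is in place, the constants fall out with no optimization over a Chernoff parameter.
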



\section{Random walks on a manifold}\label{section:geodesicRW}

The most basic setting for Mogulskii's theorem is that of random walks on $\bR^k$ with independent and identically distributed increments. We generalize this type of random walks to the setting of manifolds. One way to generate random walks with independent increments is via geodesic random walks introduced in Section \ref{section:geo_RW_eplicit}. Afterwards, in Section \ref{section:RW_identically_distributed_steps}, we generalize the concept of identically distributed increments for geodesic random walks. We conclude this section by giving some examples of random walks with independent and identically distributed increments in Section \ref{section:RW_examples}.

\subsection{Geodesic random walks} \label{section:geo_RW_eplicit}


We start by defining \emph{geodesic random walks} $\{\cS_n\}_{n \geq 0}$ on $M$. We follow  \cite{Jor75}.

\begin{definition} \label{definition:geodesic_random_walk}
	A sequence of random variables $\{\cS_n\}_{n \geq 0}$ on $M$ is called a \emph{geodesic random walks} on $(M,g)$ with \emph{increments} $\{X_{n+1}\}_{n \geq 0}$, $X_n \in T_{\cS_{n}}$ if there is a collection of measures $\{\mu_x\}_{x\in M}$ with $\mu_x \in \cP(T_xM)$ such that: 
	\begin{enumerate}[(a)]
		\item The increments $\{X_n\}_{n \geq 1}$ are independent and $X_{n+1}$ has distribution $\mu_{S_n}$;
		\item The steps are given deterministically as a function of the increments: $\cS_{n+1} := \Exp_{\cS_n}(X_{n+1})$.
	\end{enumerate}
%
%
%
%
\end{definition}

If $M = \bR^k$, the exponential map is given by addition, i.e., $\Exp_x(V) = x + V$, so a geodesic random walk reduces to a random walk with location-dependent step distribution. In this Euclidean setting, we are able to rescale a random walk: $\alpha \cS_n$, $\alpha \in \bR$. On a general manifold this is not possible. The increments, however, can be rescaled by $\alpha$.

\begin{definition}
	Let $\{\cS_n\}_{n \geq 0}$ be a geodesic random walk on $(M,g)$ with increments $\{X_{n+1}\}_{n \geq 0}$,  generated with the collection of measures $\{\mu_x\}_{x\in M}$. Let $\alpha \in \bR$.
	The \emph{rescaled geodesic random walk} $\alpha * S_n$ is the geodesic random walk generated by the collection of measures $\{\mu_{x,\alpha}\}_{x \in M}$ given by
	\begin{equation*}
	\mu_{x,\alpha} = \mu_x \circ m_\alpha^{-1},
	\end{equation*}
	where $m_\alpha : T_x M \rightarrow T_xM$ is given by $m_\alpha(V) = \alpha V$.
\end{definition}

To rephrase the definition of a rescaled geodesic random walk, the construction of $\alpha*\Ss_n$ is completely analogous to that of $\Ss_n$, the only difference being that if $X_{n+1}$ has distribution $\mu_{\Ss_n}$ one should replace for any $n$, the step $\Ss_{n+1} = \Exp_{\Ss_n}(X_{n+1})$ by $\alpha*\Ss_{n+1} = \Exp_{\alpha*S_n}(\alpha X_{n+1})$.

\subsection{Identically distributed increments} \label{section:RW_identically_distributed_steps}

We proceed by introducing an analogue notion of identically distributed increments. As the increments $X_1,X_2,\ldots$ in general do not live in the same tangent space, they are not immediately comparable. However, parallel transport allows us to identify tangent spaces, and thus to compare tangent vectors from different tangent spaces. 

\begin{definition}[Identical distributions on tangent spaces]\label{definition:consistency}
Let $\{\mu_x\}_{x\in M}$ be a collection of measures, such that for each $x\in M$, $\mu_x$ is a probability measure on $T_xM$. We say the distributions are \emph{identical} if the measures satisfy the following \emph{consistency property}:
For all $y,z \in M$ and all $\gamma:[a,b] \to M$ smooth curves with $\gamma(a) = y$ and $\gamma(b) = z$ it must hold that $\mu_z = \mu_y\circ\tau_{\gamma,ab}^{-1}$. 
\end{definition}

The consistency property in the above definition essentially says that the collection of measures $\{\mu_x\}_{x \in M}$ is invariant under parallel transportation.

\begin{remark}
In $\RR^k$, this assumption implies that the measure $\mu_x$ does not depend $x$. Indeed, in $\RR^k$, all tangent spaces can be identified with $\RR^k$ itself, and parallel transport along the straight line between two points is simply the identity. 
\end{remark}

For a probability measure $\mu_x$ on $T_xM$ we define the log moment generating function $\Lambda:T_x^*M \to \RR$ by
$$
\Lambda_x(p) = \log\int_{T_xM} e^{\inp{v}{p}}~\ud\mu_x(v).
$$

The following proposition gives an important equivalent characterization of the consistency property. 

\begin{proposition}\label{prop:logmgf}
Let $\{\mu_x\}_{x\in M}$ be a collection of measures such that $\mu_x$ is a probability measure on $T_xM$ for every $x \in M$. Assume that $\Lambda_x(p) < \infty$ for all $x \in M$ and all $p \in T_x^*M$. The following are equivalent
\begin{enumerate}[(a)]
\item \label{prop:logmgf_consistency} The collection $\{\mu_x\}_{x\in M}$ satisfies the consistency property as in Definition \ref{definition:consistency}.
\item \label{prop:logmgf_laplace_transform} For all $x,y \in M$ and all smooth curves $\gamma:[a,b] \to M$ with $\gamma(a) = x$ and $\gamma(b) = y$ and for all $p \in T_x^*M$ we have
$$
\Lambda_x(p) = \Lambda_y(\tau_{\gamma,ab}p).
$$

\end{enumerate}
\end{proposition}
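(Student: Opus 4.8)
The plan is to prove the two statements equivalent \emph{for each fixed} smooth curve $\gamma:[a,b]\to M$ with $\gamma(a)=x$, $\gamma(b)=y$; quantifying over all such curves then gives the proposition. Write $\tau:=\tau_{\gamma,ab}$, which is simultaneously a linear isomorphism $T_xM\to T_yM$ and, via the duality relation $\tau p(W)=p(\tau^{-1}W)$, a linear isomorphism $T_x^*M\to T_y^*M$. The first thing I would record is that, because the Levi-Civita connection is metric, $\tau$ is an isometry, and from the duality relation it preserves the pairing: $\inp{\tau w}{\tau p}=\inp{w}{p}$ for all $w\in T_xM$, $p\in T_x^*M$. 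I would also fix notation for the pushforward: let $\nu_x$ be the probability measure on $T_xM$ obtained by transporting $\mu_y$ back along $\tau^{-1}$ (in the paper's notation, $\nu_x=\mu_y\circ\tau$), so that the consistency property \eqref{prop:logmgf_consistency}, for this $\gamma$, is \emph{exactly} the assertion $\mu_x=\nu_x$.

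The computational core is a single change of variables: for every $p\in T_x^*M$,
\begin{equation*}
e^{\Lambda_y(\tau p)}=\int_{T_yM} e^{\inp{v}{\tau p}}\ud\mu_y(v)=\int_{T_xM} e^{\inp{\tau w}{\tau p}}\ud\nu_x(w)=\int_{T_xM} e^{\inp{w}{p}}\ud\nu_x(w),
\end{equation*}
using the substitution $v=\tau w$ and then the pairing identity. In words, $p\mapsto\Lambda_y(\tau p)$ is the log-moment generating function of $\nu_x$, while $p\mapsto\Lambda_x(p)$ is that of $\mu_x$, and no hypothesis on the family $\{\mu_x\}$ has been used yet. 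The implication \eqref{prop:logmgf_consistency}$\Rightarrow$\eqref{prop:logmgf_laplace_transform} is then immediate: if $\mu_x=\nu_x$, the two log-moment generating functions coincide, which is precisely the identity $\Lambda_x(p)=\Lambda_y(\tau p)$.

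For \eqref{prop:logmgf_laplace_transform}$\Rightarrow$\eqref{prop:logmgf_consistency}, combining the hypothesis $\Lambda_x(p)=\Lambda_y(\tau p)$ with the change-of-variables identity gives $\int_{T_xM} e^{\inp{w}{p}}\ud\mu_x(w)=\int_{T_xM} e^{\inp{w}{p}}\ud\nu_x(w)$ for every $p\in T_x^*M$; that is, $\mu_x$ and $\nu_x$ are two probability measures on the finite-dimensional space $T_xM$ with the same moment generating function, which by hypothesis is finite on all of $T_x^*M$. I would then invoke the uniqueness theorem for moment generating functions to conclude $\mu_x=\nu_x$, i.e. the consistency property for $\gamma$. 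Concretely, identifying $T_xM\cong\RR^k$ via an orthonormal basis, one uses the classical fact that a Borel probability measure on $\RR^k$ whose moment generating function is finite everywhere is determined by it: the map $z\mapsto\int e^{\inp{w}{z}}\ud\mu(w)$ extends to an entire function on $\CC^k$, its restriction to $i\RR^k$ is the characteristic function of $\mu$, and equality of the moment generating functions on $\RR^k$ forces equality of these entire functions, hence of the characteristic functions, hence of the measures.

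The only step that is not bookkeeping is this last uniqueness input, and that is where I would expect a referee to prefer a precise citation over a calculation; everything else reduces to the change of variables above together with the elementary observation that parallel transport for the Levi-Civita connection is a pairing-preserving linear isometry.
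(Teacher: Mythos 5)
Your proof is correct and follows essentially the same route as the paper's: the same change-of-variables under parallel transport combined with invariance of the duality pairing gives (a)$\Rightarrow$(b), and (b)$\Rightarrow$(a) is obtained by the same computation in reverse together with uniqueness of the moment generating function. The only difference is cosmetic — you phrase one computation and reuse it for both directions, and you spell out the analytic-continuation argument for mgf uniqueness where the paper simply cites the fact — but the underlying argument is identical.
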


\begin{proof}
We first prove that \eqref{prop:logmgf_consistency} implies \eqref{prop:logmgf_laplace_transform}. Fix $x,y \in M$ and $\gamma:[a,b] \to M$ a  smooth curve with $\gamma(a) = x$ and $\gamma(b) = y$. Let $p \in T_x^*M$. Writing $\tau_{xy} = \tau_{\gamma,ab}$ we find
\begin{align*}
\Lambda_x(p)
&=
\log\int_{T_xM} e^{\inp{v}{p}} \ud\mu_x(v)
\\&=
\log\int_{T_xM} e^{\inp{\tau_{xy}v}{\tau_{xy}p}} \ud\mu_x(v)
\\&=
\log\int_{T_yM} e^{\inp{w}{\tau_{xy}p}} \ud\mu_y(w)
\\&=
\Lambda_y(\tau_{xy}p).
\end{align*}
Here, the second line follows from the fact that the duality pairing is invariant under parallel transport and the third line follows from the consistency assumption of the collection of measures.

For the reverse implication, fix $x,y \in M$ and let $\gamma:[a,b]\to M$ be a smooth curve with $\gamma(a) = x$ and $\gamma(b) = y$. A similar argument as above shows that the log moment generating function of $\mu_x \circ \tau_{xy}^{-1}$ coincides with the log moment generating function of $\mu_y$. 
Because the moment generating function determines the distribution, we conclude that $\mu_x\circ\tau_{xy}^{-1} = \mu_y$ as desired.
\end{proof}

Finallly, let us consider the differentiability properties of the map $\Lambda: T^*M \to \RR$ given by $\Lambda(x,p) := \Lambda_x(p)$.

\begin{proposition}\label{prop:logmgf_cont_diffb}
Let $\{\mu_x\}_{x\in M}$ be a collection of measures as in Definition \ref{definition:consistency} with associated log moment generating functions $\Lambda_x$. Assume that $\Lambda_x(p) < \infty$ for all $p \in T_x^*M$ and all $x \in M$. Then the map $\Lambda: T^*M \to \RR$ given by $\Lambda(x,p) := \Lambda_x(p)$ is continuously differentiable.
\end{proposition}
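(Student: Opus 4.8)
Differentiability is a local property, so the plan is to fix $x_0 \in M$ and work on a normal neighbourhood $U = \Exp_{x_0}(B_{x_0}(0,i(x_0)))$, reducing the statement to the smoothness of a single function on $\RR^k$ via the consistency property. For $x \in U$ let $\gamma_x$ be the radial geodesic joining $x$ and $x_0$; since $\Exp_{x_0}$ and $\Exp_{x_0}^{-1}$ are smooth on $U$, the curve $\gamma_x$ depends smoothly on $x$. Let $P(x) : T_x^*M \to T_{x_0}^*M$ denote parallel transport of covectors along $\gamma_x$. As $P(x)$ is the solution of a linear ODE whose coefficients vary smoothly with $x$, the map $(x,p) \mapsto P(x)p$ on $T^*M|_U$ is smooth (it is linear in $p$). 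By Proposition \ref{prop:logmgf}, the consistency property gives
$$
\Lambda(x,p) = \Lambda_x(p) = \Lambda_{x_0}(P(x)p) \qquad \text{for all } (x,p) \in T^*M|_U ,
$$
the right-hand side being independent of the chosen curve.

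It therefore suffices to prove that $\Lambda_{x_0} : T_{x_0}^*M \to \RR$ is continuously differentiable; then $\Lambda|_{T^*M|_U}$ is a composition of $C^1$ maps, and letting $x_0$ range over $M$ finishes the proof. Identify $T_{x_0}^*M$ with $\RR^k$ by an orthonormal basis, so that $\Lambda_{x_0}(q) = \log M(q)$ with $M(q) = \int_{\RR^k} e^{\inp{v}{q}}\ud\mu_{x_0}(v) \in (0,\infty)$ for every $q$, by hypothesis. Using the elementary inequality $|t| \le e^{t} + e^{-t}$ we get $|v_i| e^{\inp{v}{q}} \le e^{\inp{v}{q+e_i}} + e^{\inp{v}{q-e_i}}$, and more generally, for $q$ in a bounded neighbourhood of a fixed $q_0$, the integrand $e^{\inp{v}{q}}$ and its first-order partials $v_i e^{\inp{v}{q}}$ are dominated by a finite sum of functions $e^{\inp{v}{q_0 + w}}$, each $\mu_{x_0}$-integrable. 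Hence one may differentiate under the integral sign: $M$ is $C^1$ (in fact $C^\infty$) with $\partial_{q_i} M(q) = \int v_i e^{\inp{v}{q}}\ud\mu_{x_0}(v)$, which is continuous in $q$ by dominated convergence. Since $M > 0$ everywhere, $\Lambda_{x_0} = \log M$ is $C^1$.

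The only step requiring genuine care is the differentiation under the integral sign, i.e. establishing the locally uniform integrable domination for $M$ and its partials; once the bound above is recorded this is routine. The rest is bookkeeping: combining Proposition \ref{prop:logmgf} with the smooth dependence of parallel transport on its endpoint. (The argument in fact yields that $\Lambda$ is $C^\infty$, of which only $C^1$ is claimed.)
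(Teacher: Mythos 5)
Your proof follows essentially the same route as the paper: fix a basepoint, use Proposition~\ref{prop:logmgf} together with the smooth dependence of parallel transport on its endpoints to write $\Lambda(x,p)=\Lambda_{x_0}(\tau_{xx_0}p)$ on a normal neighbourhood, and reduce to the smoothness of the single function $\Lambda_{x_0}$. The only difference is that the paper cites \cite[Lemma 2.2.31]{DZ98} for the $C^1$ regularity of a finite log-moment generating function, whereas you spell out the domination argument for differentiating under the integral sign.
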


\begin{proof}
Fix $x \in M$ and take $U \subset M\setminus C_x$ open such that $x \in U$. For $y \in U$ there exists a unique geodesic of minimal length connecting $x$ and $y$. By Proposition \ref{prop:logmgf} we can write
$$
\Lambda(y,p) = \Lambda(x,\tau_{yx}p)
$$
for all $y \in U$, where $\tau_{xy}$ denotes parallel transport along the geodesic of minimal length between $y$ and $x$. 

As parallel transport is given as the solution of a system of linear differential equations with smooth coefficients, we find that the map $(y,p) \to \tau_{yx}p$ is smooth. Furthermore, as $\Lambda_x$ is finite on $T_x^*M$, it is continuously differentiable (cf. \cite[Lemma 2.2.31]{DZ98}). Because $\Lambda$ is the composition of continuously differentiable maps on $T^*U$, it is continuously differentiable. As this holds for any $x \in M$, the claim follows.  
\end{proof}

\subsection{Examples}\label{section:RW_examples}

We give some examples of collections of measures $\{\mu_x\}_{x\in M}$ satisfying Definition \ref{definition:consistency}

\begin{example}[Uniform distribution on a ball]
Fix $r > 0$. For any $x \in M$, let $\mu_x$ be the uniform distribution on $\left\{V \in T_xM \, \middle| \, \sn{V}_g \leq r\right\} \subseteq T_xM$. To see that this collection of measures satisfies the consistency property, one simply has to observe that parallel transport is an isometry between tangent spaces, thus mapping balls of same radii in different tangent spaces bijectively onto each other.
\end{example}

The next example will be used in a later section to indicate the connection between Mogulskii's theorem and Schilder's theorem.

\begin{example}[Normal distribution]\label{example:normaldistribution}
We now want to consider geodesic random walks with normally distributed increments. For this, we define what we consider to be a standard normal distribution on $T_xM$ and show that it satisfies the consistency property. We say that $V$ has a standard normal distribution if for some basis (equivalently, all bases) $E_1,\ldots,E_N$ of $T_xM$ it holds that
$$
(V^1,V^2,\ldots,V^N) \sim \Nn(0,G^{-1}(x))
$$
where $V = V^iE_i$ and $G(x)$ is the matrix of the metric tensor at $x$ with respect to the basis $E_1,\ldots,E_N$. This is well-defined, because $G^{-1}(x)$ transforms tensorially under coordinate transformations.



To show that this collection of measures satisfies the consistency property in Definition \ref{definition:consistency}, we make use of Proposition \ref{prop:logmgf}. We compute the log moment generating function $\Lambda_x$ of $\mu_x$. For this, we will show that
$$
\inp{V}{p} \sim N(0,|p|^2_{g(x)}).
$$
for any $p \in T_x^*M$. Let $(E_1,\ldots,E_N)$ be a basis for $T_xM$ and $(E_1^*,\ldots,E_N^*)$ the corresponding dual basis of $T_x^*M$. Write $V = V^iE_i$ and $p = p_jE_j^*$. Then
$$
\inp{V}{p} = p_iV^i
$$
This has a normal distribution with mean 0 and variance $p^TG^{-1}(x)p = |p|_{g(x)}^2$.
Using this, the log moment generating function becomes
$$
\Lambda_x(p) = \log\int_{T_xM} e^{\inp{v}{p}}\ud\mu_x(v) = \frac12|p|_{g(x)}^2.
$$
As parallel transport along any smooth curve is an isometry, we find that \eqref{prop:logmgf_laplace_transform} of Proposition \ref{prop:logmgf} is trivially satisfied and consequently, the collection $\{\mu_x\}_{x\in M}$ satisfies the consistency property.
\end{example}

\begin{remark}
The previous example shows that if we have for all $x \in M$ that $\Lambda_x(p) = f(|p|_{g(x)})$ for some function $f$, independent of $x$, then the measures $\{\mu_x\}_{x\in M}$ satisfy the consistency property in Definition \ref{definition:consistency}. This is for example the case if $\mu_x$ conditioned on the norm is uniformly distributed, and the norm is distributed according to a distribution $\nu$ independent of $x$.
\end{remark}

Finally, we will show that if a geodesic random walk has identically distributed increments, it is sufficient to know the probability distribution in a given tangent space. This leads to an equivalent characterization of a geodesic random walk.

\begin{example}[Equivalent characterization of a geodesic random walk]\label{example:alternative}

Suppose we have fixed an initial point $x_0 \in M$ and a measure $\mu$ on $T_{x_0}M$ with the following property: \emph{For every smooth loop $\gamma:[a,b] \to M$ with $\gamma(a) = \gamma(b) = x_0$ it holds that $\mu = \mu \circ \tau_{\gamma,ab}$, i.e., $\mu$ is invariant under parallel transport along any loop.}

Given such a measure $\mu$, we can construct a family of measures $\{\mu_x\}_{x\in M}$ which satisfies Definition \ref{definition:consistency}. Indeed, given $x \in M$, we take a smooth curve $\gamma:[a,b] \to M$ with $\gamma(a) = x_0$ and $\gamma(b) = x$ and define $\mu_x = \mu\circ \tau_{\gamma,ab}$. The assumption on $\mu$ implies that this is well-defined, i.e. independent of the curve $\gamma$, and that the given collection of measures satisfies the consistency property. Indeed, by arguing in a chart around $x$ and $x_0$ respectively, one can make sure to concatenate a smooth curve from $x$ to $x_0$ to the one from $x$ to $x_0$ in a smooth way to create a smooth loop.

Now if $\tilde X_1,\tilde X_2,\ldots$ are $T_{x_0}M$-valued random variables with distribution $\mu$, one can parallelly transport these along the path of the geodesic random walk to obtain all increments $X_1,X_2,\ldots$ of the walk. 

\end{example}


\section{Main results}\label{section:mainresults}


We start by stating Schilder's theorem for Riemannian Brownian motion. 

\begin{theorem}[Schilder's theorem for Riemannian Brownian motion]\label{theorem:Schilder}
Let $(M,g)$ be a complete Riemannian manifold of dimension $k$. Assume that there exists a constant $L \in \bR$ such that $\Ric \geq L$. Let $W_t$ be a Riemannian Brownian motion and assume that $W_0$ satisfies the LDP in $M$ with good rate function $I_0$. Define for every $\epsilon > 0$ the process $W_t^\epsilon: = W_{t\epsilon}$. Then the trajectories of $\{W^\epsilon\}_{\epsilon > 0}$ satisfy in $D(\RR^+;M)$ the LDP with good rate function
$$
I_M(\phi) =
\begin{cases}
 I_0(\phi(0)) + \frac12\int_0^\infty|\dot \phi(t)|_M^2\ud t &\phi \in H^1(\RR^+;M),\\ 
\qquad \infty &\emph{otherwise}.
\end{cases}.
$$
\end{theorem}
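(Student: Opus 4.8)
The plan is to reduce the statement to the Euclidean Freidlin--Wentzell theorem (Theorem \ref{thm:FW}) by isometrically embedding $M$ into some $\RR^N$. By the Nash embedding theorem there is an isometric embedding $\iota: M \hookrightarrow \RR^N$; when $M$ is non-compact one must be a little careful, but since the hypotheses ($\Ric \geq L$) only guarantee completeness and a lower Ricci bound, I would first treat the case where the embedding has bounded geometry (bounded second fundamental form and derivatives), which is what is needed to get bounded Lipschitz coefficients in the SDE; in the general case one localizes. Under such an embedding, the horizontal lift construction of Section \ref{subsection:BMSDE} shows that $\iota(W_t)$ solves a Stratonovich SDE on $\RR^N$ of the form $\dd X_t = \sigma(X_t)\circ \dd B_t$ where $\sigma(x)$ is (essentially) orthogonal projection onto $T_x\iota(M)$ expressed in ambient coordinates, extended smoothly and with bounded, Lipschitz coefficients to a neighborhood of $\iota(M)$ and then cut off. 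The time-rescaled process $W^\epsilon_t = W_{\epsilon t}$ then corresponds to $X^\epsilon_t$ solving $\dd X^\epsilon_t = \sqrt\epsilon\,\sigma(X^\epsilon_t)\circ\dd B_t$, i.e.\ equation \eqref{eq:SDEFW} with $b\equiv 0$.

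The key steps, in order, are: (1) fix the isometric embedding $\iota$ and identify the drift and diffusion coefficients $b,\sigma$ of the ambient SDE satisfied by $\iota(W^\epsilon_t)$, checking that (after a harmless modification outside a tubular neighborhood of $\iota(M)$) they are bounded and Lipschitz, so that Theorem \ref{thm:FW} applies and gives the LDP for $\iota(W^\epsilon_\cdot)$ in $C(\RR^+;\RR^N)$ with rate function \eqref{equation:RFFW}; (2) observe that, because $\iota(M)$ is invariant for the SDE (the diffusion vector fields are tangent to $\iota(M)$ and we may take $b$ tangent as well), the rate function is infinite off paths staying in $\iota(M)$, so the LDP may be restricted to $C(\RR^+;\iota(M))$, which is homeomorphic to $C(\RR^+;M)$; (3) transport the LDP back along $\iota^{-1}$, using that $\iota$ is a homeomorphism onto its image and a proper map to preserve the topology and goodness of the rate function (an application of the contraction principle for the continuous injection $\iota$, or simply the fact that a homeomorphism carries an LDP to an LDP); (4) identify the pulled-back rate function: for $g \in \cA\cC$ producing a path $f = \iota\circ\phi$ in $\iota(M)$ one has $\sigma(f(s))\dot g(s) = \dd\iota(\dot\phi(s))$, and since $\sigma(f(s))$ is an isometry from a subspace of $\RR^l$ onto $T_{\phi(s)}M$, minimizing $\frac12\int_0^\infty |\dot g(s)|^2\dd s$ over all $g$ producing $\phi$ yields exactly $\frac12\int_0^\infty |\dot\phi(s)|^2_{g}\dd s$; combined with the $I_0(\phi(0))$ term this is $I_M(\phi)$, and the constraint $\phi \in H^1(\RR^+;M)$ emerges from finiteness of the integral. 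Finally, (5) note the LDP is stated in $D(\RR^+;M)$ but is in fact supported on continuous paths, so the $C$-space LDP upgrades to $D$ via the fact that $C(\RR^+;M)$ is closed in $D(\RR^+;M)$ and carries the subspace topology of uniform convergence on compacts.

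The main obstacle I expect is step (1) in the non-compact case: Nash embedding does not a priori give an embedding with bounded second fundamental form, so the coefficients $b,\sigma$ of the ambient SDE need not be globally bounded and Lipschitz, and Theorem \ref{thm:FW} as stated requires this. I would handle this either by invoking a version of the embedding theorem with bounded geometry under the Ricci (and, if needed, additional curvature-derivative) assumptions, or — more robustly — by a localization/exponential-tightness argument: use Proposition \ref{prop:stopping} to show that on bounded time intervals the rescaled Brownian motion stays in a compact geodesic ball with superexponentially small exceptional probability, prove the LDP on each such ball (where the embedded coefficients can be cut off to be bounded Lipschitz), and patch the local LDPs together. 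The exponential tightness needed for this patching, and for the final statement in $D(\RR^+;M)$, is exactly what Proposition \ref{prop:stopping} is designed to deliver. The remaining steps are essentially bookkeeping: the identification of the rate function in step (4) is a routine Legendre-type computation using that $\sigma$ restricted to the relevant subspace is a linear isometry.
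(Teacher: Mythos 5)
Your approach is correct and reaches the same conclusion, but by a genuinely different route from the paper's own proof, and the comparison is instructive. The paper never embeds $M$ itself: it lifts $W^\epsilon$ to a horizontal Brownian motion $U^\epsilon$ on the orthonormal frame bundle $OM$ (driven by a $k$-dimensional Brownian motion via the fundamental horizontal vector fields $H_1,\dots,H_k$), uses Whitney's non-isometric embedding of $OM$ into $\RR^N$, applies Freidlin--Wentzell there, and then contracts twice, first to $C([0,T];OM)$ and then via $\pi$ to $C([0,T];M)$, identifying the Lagrangian through the anti-development. You instead use a Nash isometric embedding of $M$ directly into $\RR^N$ and the ``projection SDE'' representation of Riemannian Brownian motion on an embedded submanifold, $\dd X_t = P(X_t)\circ\dd B_t$ with $P(x)$ orthogonal projection onto $T_x\iota(M)$. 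This is cleaner in two respects: you need only one contraction, and you sidestep the choice of a (global) section of $OM$, which the paper uses to define the lifted initial condition and which is only guaranteed for parallelizable $M$. The price is that you must invoke the Stroock/van den Berg--Lewis fact that $\Delta_M = \sum_{\alpha=1}^N P_\alpha^2$ under an isometric embedding; this is standard, but it is \emph{not} in the paper's toolkit, and in fact Section \ref{subsection:BMSDE} of the paper states the opposite (``it is not possible to obtain Riemannian Brownian motion by solving an SDE on the manifold $M$ itself''), which is over-cautious once one allows more than $k$ driving Brownian motions. Your citation of ``the horizontal lift construction of Section \ref{subsection:BMSDE}'' for the projection SDE is therefore a misattribution: that section constructs the frame-bundle SDE, not the projection SDE, and you should supply the sum-of-squares decomposition of $\Delta_M$ separately. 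The remaining machinery — cutting off the (possibly unbounded) coefficients on geodesic balls $K_m$, applying Theorem \ref{thm:FW}, identifying the Lagrangian by minimal-norm preimage under $P$ (matching the paper's minimization over anti-developments), and patching the localized LDPs using Proposition \ref{prop:stopping} — is in substance the same as the paper's, so once the projection SDE is justified your proof is complete.
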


We now turn to the generalization of Mogulskii's theorem for time-scaled geodesic random walks on $M$.

Fix an initial point $x_0 \in M$, and let $\{\mu_x\}_{x\in M}$ be a collection of measures satisfying the consistency property as in Definition \ref{definition:consistency}. Let $\frac 1n*\Ss_n$ be the scaled geodesic random walk with independent, identically distributed steps according to the measures $\{\mu_x\}_{x\in M}$ and starting from $x_0 \in M$. Furthermore, we define the processes
$$
Z_n(t) = \frac1n*\Ss_{\lfloor nt\rfloor}, \qquad t \in \bR^+.  
$$

The generalization of Mogulskii's theorem reads as follows. 

\begin{theorem}[Mogulskii's theorem for Riemannian manifolds]\label{theorem:Mogulskii}
Let $M$ be a complete Riemannian manifold of dimension $k$. 
Let $\{\mu_x\}_{x\in M}$ be a collection of measures on the tangent spaces satisfying the consistency property as in Definition \ref{definition:consistency}. Assume that for every $x \in M$ and $p \in T_x^*M$ we have $\Lambda_x(p) < \infty$ 
For every $n \in \NN$, denote by $\nu_n$ the measure of $Z_n(\cdot)$ (as defined above) in $C(\bR^+;M)$. Assume that $\{Z_n(0)\}_{n\geq 1}$ satisfies the LDP in $M$ with good rate function $I_0$. Then the measures $\{\nu_n\}_{n\geq1}$ satisfy in $C(\bR^+;M)$ the large deviation principle with good rate function
\begin{equation}\label{eq:rfmog}
I(\gamma) = 
\begin{cases}
I_0(\gamma(0)) + \int_0^\infty \Lambda_{\gamma(t)}^*(\dot\gamma(t))\ud t	& \gamma \in \cA\cC(\RR^+;M), \\
\infty												&\emph{otherwise}.\\
\end{cases}
\end{equation}
\end{theorem}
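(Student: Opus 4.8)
The plan is to prove Theorem \ref{theorem:Mogulskii} by invoking the Feng--Kurtz formalism for large deviations of sequences of Markov processes, applied to the discrete-time Markov chains $Z_n(\cdot)$. The first step is to identify the correct sequence of generators. The process $Z_n$ is a Markov process whose one-step transition operator $T_n$ acts on $f \in C_b(M)$ by $T_nf(x) = \int_{T_xM} f(\Exp_x(\tfrac1n v))\,\dd\mu_x(v)$, and the time-scaling by $\lfloor nt \rfloor$ means that over a macroscopic time increment of order $1$ we compose $T_n$ with itself of order $n$ times. Accordingly one studies the nonlinear generator $H_n f = \tfrac1n e^{-nf} T_n e^{nf}$ and checks that it converges (in the appropriate viscosity/extended-limit sense of \cite{FK06}) to the operator $Hf(x) = \Lambda_x(\dd f(x))$ on a suitable domain (e.g.\ $C^\infty_c(M)$). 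The key computation here is a Taylor expansion: writing $e^{n(f(\Exp_x(v/n)) - f(x))} \approx e^{\inp{\dd f(x)}{v}}$ up to corrections that vanish as $n\to\infty$ uniformly on compact sets (using that $\Exp_x(v/n) = x + v/n + O(1/n^2)$ in normal coordinates and smoothness of $f$), one gets $H_nf(x) \to \log\int e^{\inp{\dd f(x)}{v}}\,\dd\mu_x(v) = \Lambda_x(\dd f(x))$. The finiteness hypothesis $\Lambda_x(p) < \infty$ together with Proposition \ref{prop:logmgf_cont_diffb} guarantees $H$ has good enough regularity.

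The second step is the comparison principle for the Hamilton--Jacobi equation $f - \lambda Hf = h$ (for $\lambda > 0$, $h \in C_b(M)$): one needs uniqueness of viscosity solutions, which is the analytic heart of the Feng--Kurtz machinery and yields the existence of the limiting semigroup $V(t)$ together with the variational (Nisio) representation $V(t)f(x) = \sup_{\gamma}\{ f(\gamma(t)) - \int_0^t \cL(\gamma(s),\dot\gamma(s))\,\dd s \}$ over absolutely continuous paths $\gamma$ with $\gamma(0) = x$, where $\cL(x,v) = \sup_{p \in T_x^*M}(\inp{v}{p} - \Lambda_x(p)) = \Lambda_x^*(v)$ is the Legendre transform. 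Establishing the comparison principle on a (possibly noncompact) Riemannian manifold requires care: the standard doubling-of-variables argument must be adapted, using $d^2(x,y)$ as penalization together with Proposition \ref{proposition:transport_derivative_distance} to control the derivatives of the penalization term, and one needs some coercivity/containment argument to keep the maximizing sequences in a compact region — this is where a Lyapunov function or the good-rate-function structure enters.

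The third step is exponential tightness of $\{\nu_n\}$ in $C(\bR^+;M)$. This splits into tightness of the initial distributions (given by the LDP assumption on $Z_n(0)$, which forces exponential tightness) and a modulus-of-continuity estimate: one must show that for any $\eta, T > 0$, $\limsup_n \tfrac1n\log \PP(\sup_{|s-t|\le\delta, s,t\le T} d(Z_n(s),Z_n(t)) > \eta)$ can be made arbitrarily negative by taking $\delta$ small. Since a single step moves the walk by at most $|X_{i}|_g/n$, this follows from exponential moment bounds on $|X_i|_g$ (which are available because $\Lambda_x$ is finite everywhere, hence $\mu_x$ has all exponential moments, and the consistency property makes the bound uniform in $x$), via a standard large-deviation/Chernoff estimate for sums of i.i.d.-like increments together with a covering of $[0,T]$ into small subintervals.

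Given these three ingredients, the abstract theorem from \cite{FK06} delivers the LDP for $\{\nu_n\}$ with rate function $I(\gamma) = I_0(\gamma(0)) + \int_0^\infty \Lambda_{\gamma(t)}^*(\dot\gamma(t))\,\dd t$ for $\gamma \in \cA\cC(\RR^+;M)$ and $+\infty$ otherwise, after identifying the Legendre dual of $H$ with the integrand, and one checks it is a good rate function using lower semicontinuity of $\cL$ in $(x,v)$ and compactness of sublevel sets (which again uses the growth of $\Lambda_x^*$). The main obstacle I expect is the comparison principle / well-posedness of the Hamilton--Jacobi equation on a general complete manifold: the Hamiltonian $H(x,p) = \Lambda_x(p)$ is smooth and convex in $p$ but its behavior at spatial infinity is only controlled through the consistency property, so one must either impose or derive enough uniform control (a bound of the form $\Lambda_x(p) = f(|p|_{g(x)})$ as in the Remark after Example \ref{example:normaldistribution} would make this transparent) and combine it with a containment function argument to run the viscosity-solution uniqueness proof; the Taylor expansion for the generator convergence is routine by comparison, and exponential tightness, while technical, follows standard lines.
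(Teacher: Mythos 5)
Your proposal follows essentially the same Feng--Kurtz route as the paper: compute the nonlinear generators $H_n$, show they converge to $Hf(x) = \Lambda_x(\dd f(x))$, verify the comparison principle for $f - \lambda Hf = h$ by doubling of variables with the penalization $d^2(x,y)$ together with Proposition \ref{proposition:transport_derivative_distance} and the consistency property (via Proposition \ref{prop:logmgf}), use a good containment function to restrict to compacts, and invoke the abstract machinery (Theorem \ref{theorem:abstract_LDP}) to obtain the Lagrangian form of the rate function. One correction: for a \emph{discrete-time} chain with transition operator $T_n$, time-step $\varepsilon_n = 1/n$ and speed $r_n = n$, the nonlinear generator from Assumption \ref{assumption:LDP_assumption} is $H_n f = \log\bigl(e^{-nf}T_n e^{nf}\bigr)$, not $\tfrac1n e^{-nf}T_n e^{nf}$ (that form is the continuous-time one with $T_n$ replaced by an infinitesimal generator $A_n$); with your formula the Taylor expansion gives $\tfrac1n\int e^{\inp{v}{\dd f(x)}}\ud\mu_x(v)\to 0$, so the claimed limit $\Lambda_x(\dd f(x))$ would fail. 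Also worth noting: the paper gets exponential tightness directly from the good containment function via Proposition \ref{proposition:exp_compact_containment} and Corollary 4.19 of \cite{FK06}, rather than through a hands-on modulus-of-continuity/Chernoff estimate as you sketch; your route is plausible, but the abstract framework already delivers it.
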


 

For completeness, let us explicitly write down the rate function for a specific example.

\begin{example}\label{example:Mogulskii}
Let $\{\mu_x\}_{x\in M}$ be the collection of standard normal distributions as defined in Example \ref{example:normaldistribution}. There it was shown that these measures satisfy the consistency property as in Definition \ref{definition:consistency}. By Mogulskii's theorem, we find that the process $Z_n(t) = \frac1n*\Ss_{\lfloor nt\rfloor}$ satisfies the large deviation principle in $D(\bR^+,M)$ with good rate function
\begin{equation*}
I(\gamma) = 
\begin{cases}
\frac12\int_0^\infty |\dot\gamma(t))|^2_{g(\gamma(t))} \ud t,	& H^1_{x_0}(\RR^+;M), \\
\infty									&\emph{otherwise}.\\
\end{cases}
\end{equation*}
Here we used that $\Lambda_x(p) = \frac12|p|_{g(x)}^2$ and consequently, $\Lambda_x^*(v) = \frac12|v|_{g(x)}^2$.
\end{example}

\begin{remark}
The rate function obtained in the above example coincides with the one found in Schilder's theorem for Riemannian Brownian motion. In the Euclidean case, this is no coincidence, as the increments of Brownian motion are normally distributed and one can deduce Schilder's theorem from Mogulskii's theorem by discretizing. However, in the Riemannian setting it is not clear if a similar approach works, because the increments of Riemannian Brownian motion are no longer normally distributed with the desired parameters due to the curvature. 

What remains true is the result of Varadhan \cite{Var67} on the short-time asymptotics for the heat kernel, stating that
\begin{equation}\label{eq:Varadhan}
\lim_{t\to0} t\log p_M(x,y,t) = -\frac{d^2(x,y)}{2}.
\end{equation}
Note for a general Riemannian manifold, the Riemannian metric does not satisfy assumptions B and C in Section 2 of \cite{Var67}. However, similarly as done in the proof of Lemma 3.1 in \cite{Var67}, one can use \eqref{eq:Varadhan} to obtain the large deviations for the finite dimensional distributions of Brownian paths once Gaussian bounds for the heat kernel for a general (stochastically complete) Riemannian manifold are established (see e.g. \cite{ATW06}). Using Proposition \ref{prop:stopping} (which replaces Lemma 3.2 in \cite{Var67}), one can follow the argument in proving Theorem 3.3 in \cite{Var67} to obtain the large deviations upper bound in Schilder's theorem. For the lower bound, one can exactly mimic the proof of Lemma 3.4 in \cite{Var67}.

\end{remark}

Finally, we present the generalization of Cram\'er's theorem for geodesic random walks in $M$, which is a corollary of Mogulskii's theorem.

\begin{theorem}[Cram\'er's theorem for Riemannian manifolds]\label{theorem:Cramer}
Let $(M,g)$ be a complete Riemannian manifold of dimension $k$.  
Fix $x_0 \in M$ and let $\{\mu_x\}_{x\in M}$ be a collection of measures on the tangent spaces satisfying the consistency property as in Definition \ref{definition:consistency}. Denote by $\frac1n*\Ss_n$ the associated scaled geodesic random walk.  Assume that the log moment generating function $\Lambda_x$ is everywhere finite. Denote by $\nu_n$ the law of $\frac1n*\Ss_n$ in $M$. Then $\{\nu_n\}_{n\geq1}$ satisfies in $M$ the LDP with good rate function
$$
I_M(x) = \min\{\Lambda_{\mu_{x_0}}^*(\dot\gamma(0))|\gamma:[0,1]\to M \emph{ geodesic}, \gamma(0) = x_0, \gamma(1) = x\}.
$$
\end{theorem}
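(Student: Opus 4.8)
The plan is to derive Theorem~\ref{theorem:Cramer} from Theorem~\ref{theorem:Mogulskii} by the contraction principle, exactly as Cram\'er's theorem follows from Mogulskii's theorem in the Euclidean case. Observe that $\frac1n*\Ss_n = Z_n(1)$, where $Z_n$ is the time-scaled geodesic random walk of Theorem~\ref{theorem:Mogulskii} with the deterministic initial condition $Z_n(0) = x_0$ (so that $I_0(x) = 0$ if $x = x_0$ and $+\infty$ otherwise, which is trivially a good rate function). The evaluation map $\pi_1 : C(\bR^+;M) \to M$, $\gamma \mapsto \gamma(1)$, is continuous, so by the contraction principle the laws $\nu_n$ of $Z_n(1)$ satisfy the LDP in $M$ with good rate function
$$
I_M(x) = \inf\left\{ \int_0^\infty \Lambda_{\gamma(t)}^*(\dot\gamma(t))\ud t \;\middle|\; \gamma \in \cA\cC(\RR^+;M),\ \gamma(0) = x_0,\ \gamma(1) = x \right\}.
$$

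First I would reduce the infimum over paths on $[0,\infty)$ to an infimum over paths on $[0,1]$: since $\Lambda_y^* \geq 0$ (it is a Legendre transform of a function vanishing at $0$) and $\Lambda_y^*(0) = 0$ (as $\Lambda_y$ is finite near $0$ with $\Lambda_y(0) = 0$), one may always extend a path on $[0,1]$ by a constant on $[1,\infty)$ at zero extra cost, and conversely restricting to $[0,1]$ only decreases the integral. Hence
$$
I_M(x) = \inf\left\{ \int_0^1 \Lambda_{\gamma(t)}^*(\dot\gamma(t))\ud t \;\middle|\; \gamma \in \cA\cC([0,1];M),\ \gamma(0) = x_0,\ \gamma(1) = x \right\}.
$$

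The key step is then to show that this infimum is attained, and attained at a geodesic, with value $\Lambda_{\mu_{x_0}}^*(\dot\gamma(0))$. The consistency property gives $\Lambda_y(\tau_{xy}p) = \Lambda_x(p)$, and dualizing (parallel transport is a linear isometry, so it intertwines Legendre transforms) yields $\Lambda_y^*(\tau_{xy}V) = \Lambda_x^*(V)$; thus $\Lambda^*$ is itself invariant under parallel transport. For a fixed endpoint connection, I would argue that along any path $\gamma$ one can ``straighten'' the velocity by parallel transport and use convexity of $\Lambda_{x_0}^*$ together with Jensen's inequality to bound the time-integral from below by $\Lambda_{x_0}^*$ evaluated at (a parallel transport of) the averaged velocity; equality forces the transported velocity to be constant, i.e.\ $\gamma$ to be a geodesic traversed at constant speed. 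Concretely, for a geodesic $\gamma$ with $\gamma(0)=x_0$, $\dot\gamma(t)$ is the parallel transport of $\dot\gamma(0)$ along $\gamma$, so $\Lambda_{\gamma(t)}^*(\dot\gamma(t)) = \Lambda_{x_0}^*(\dot\gamma(0))$ is constant in $t$ and the integral over $[0,1]$ equals $\Lambda_{x_0}^*(\dot\gamma(0))$; minimizing over the (at most countably many, by completeness) geodesics from $x_0$ to $x$ gives the stated formula, and the $\inf$ becomes a $\min$ because $\Lambda_{x_0}^*$ is a good rate function with compact sublevel sets and the set of initial velocities of geodesics reaching $x$ in time $1$ is closed.

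The main obstacle is the lower bound direction: showing that no absolutely continuous path can beat the best geodesic. The clean way is the reparametrization/Jensen argument above, but it requires care because $\Lambda_{x_0}^*$ need not be strictly convex, so the minimizer need not be unique and the equality analysis must be done at the level of the effective variational problem rather than pointwise; additionally one must handle the case $\Lambda_{x_0}^*(V) = +\infty$ for large $V$ and confirm that the relevant velocities lie in the effective domain. An alternative, perhaps cleaner, route is to invoke directly that the Mogulskii rate function restricted to paths with fixed endpoints is minimized by constant-speed geodesics because the integrand depends on $(\gamma,\dot\gamma)$ only through $\Lambda^*$ which is parallel-transport invariant — this is precisely the classical fact that for a ``translation-invariant'' Lagrangian the action is minimized by affine (here: geodesic) curves — and then identify the minimal value. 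I would present the contraction-principle reduction in full and then carry out this geodesic-minimization lemma, flagging the convexity/attainment details as the substantive part.
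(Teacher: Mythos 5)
Your proposal follows the paper's route essentially verbatim: contraction via the continuous evaluation map $\gamma \mapsto \gamma(1)$ applied to Theorem~\ref{theorem:Mogulskii}, then Jensen's inequality with a uniform random time $U$ to argue that the resulting variational problem is solved by geodesics, and finally coercivity and lower semicontinuity of $\Lambda^*_{\mu_{x_0}}$ together with compactness of $\Exp_{x_0}^{-1}(x) \cap \{v : |v|_{g(x_0)} \leq R\}$ to turn the infimum into a minimum. The paper carries out the Jensen step exactly as you sketch, writing $\tilde I(\gamma) = \bE\bigl[\Lambda^*_{\mu_{x_0}}(\tau^{-1}_{0\gamma(U)}\dot\gamma(U))\bigr] \geq \Lambda^*_{\mu_{x_0}}\bigl(\bE[\tau^{-1}_{0\gamma(U)}\dot\gamma(U)]\bigr)$, and then uses parallel-transport invariance of $\Lambda^*$ to evaluate the cost of a geodesic.

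However, you explicitly leave the pivotal step open, and the concern you flag (lack of strict convexity, effective domain) is not the substantive issue. The genuine difficulty is that Jensen does not produce a competitor with the correct endpoint: setting $V(t) := \tau^{-1}_{0\gamma(t)}\dot\gamma(t)$ and $\bar V := \int_0^1 V(t)\ud t \in T_{x_0}M$, Jensen gives $\tilde I(\gamma) \geq \Lambda^*_{\mu_{x_0}}(\bar V)$, and the geodesic $\gamma'$ with $\dot\gamma'(0) = \bar V$ indeed has cost $\Lambda^*_{\mu_{x_0}}(\bar V)$, but on a curved manifold there is in general no reason that $\Exp_{x_0}(\bar V) = x$; the endpoint of the Cartan development of $\gamma$ is not the velocity of a geodesic hitting $\gamma(1)$. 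The paper's own phrasing (``for every curve $\gamma$ there exists a geodesic $\gamma'$ such that $\tilde I(\gamma') \leq \tilde I(\gamma)$'') also elides this. To close the gap one needs either a direct-method (Tonelli) argument establishing existence of a minimizer with fixed endpoints plus a first-variation computation showing that minimizers are geodesics, or, in the rotationally invariant case $\Lambda^*(v) = g(|v|)$ with $g$ convex nondecreasing, a scalar Jensen bound $\int_0^1 g(|\dot\gamma|)\ud t \geq g\bigl(\int_0^1 |\dot\gamma|\ud t\bigr) \geq g(d(x_0,x))$ that sidesteps the endpoint issue entirely. Your appeal to the ``classical fact'' about translation-invariant Lagrangians is exactly this missing lemma and cannot be invoked as known on a manifold. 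A minor slip: the set of geodesics from $x_0$ to $x$ can be uncountable (antipodal points on a sphere), though this does not matter for the attainment argument.
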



Let us also provide an explicit example of a rate function as in Cram\'er's theorem.

\begin{example}
Continuing Example \ref{example:Mogulskii}, let us also find the corresponding rate function for the end points of the rescaled geodesic random walk with normal increments.  Recall that $\Lambda^*_{\mu_{x_0}}(v) = \frac12|v|_{g(x_0)}$. Now suppose we have some geodesic $\gamma:[0,1]\to M$ with $\gamma(0) = x_0$ and $\gamma(1) = x$. As $\dot \gamma$ is parallel along $\gamma$, one finds
$$
d(x_0,x) \leq \int_0^1 |\dot\gamma(t)|\ud t = |\dot\gamma(0)|_{g(x_0)}.
$$
In particular, there is at least one geodesic $\tilde\gamma$ for which equality holds. Consequently, we find that
\begin{align*}
I_M(x)
&=
\inf\left\{\Lambda_{\mu_{x_0}}^*(\dot\gamma(0)) \, \middle| \,  \gamma:[0,1]\to M \text{ geodesic}, \gamma(0) = x_0, \gamma(1) = x\right\}.
\\
&=
\frac12 d(x_0,x)^2.
\end{align*} 
\end{example}

\section{A proof of Schilder's theorem, Theorem \ref{theorem:Schilder}, via embedding}\label{section:Schilderembedding}

In this section we provide, a new proof of Schilder's theorem for Riemannian Brownian motion on $M$ (Theorem \ref{theorem:Schilder}). We use the orthonormal frame bundle $OM$ and Freidlin-Wentzell theory in Euclidean space by embedding $OM$ into some Euclidean space. For the relevant terminology regarding orthonormal frame bundles, we refer to Appendix \ref{section:framebundle}.\\


{\it Sketch of proof.} We first give a sketch of the proof. 
First note that it suffices to show that for any $T >0$ the LDP holds in $C([0,T];M)$ with good rate function given by
\begin{equation}\label{equation:RFT}
I_M(\phi) =
\begin{cases}
 I_0(\phi(0)) + \frac12\int_0^T|\dot \phi(t)|_M^2\ud t &\phi \in H^1([0,T];M),\\ 
\qquad \infty &\emph{otherwise}.
\end{cases}.
\end{equation}

To show this, observe that if $W_t^\epsilon$ is a rescaled Riemannian Brownian motion, then its horizontal lift $U_t^\epsilon$ to $OM$ is a rescaled horizontal Brownian motion. As explained in Section \ref{subsection:BMSDE}, $U_t^\epsilon$ satisfies
$$
\dd U_t^\epsilon = H_i(U^\epsilon_t)\circ \dd B_t^{\epsilon,i}, \qquad U_0^\epsilon = u_0
$$
where $B_t^\epsilon$ is a rescaled $\RR^k$-valued Brownian motion. Using Whitney's embedding theorem, we can embed $OM$ into a Euclidean space $\RR^N$ and push-forward the SDE, making use of proposition \ref{prop:difSDE} to relate the solutions. This results in an SDE on $\RR^N$ driven by a Euclidean Brownian motion, the solution of which remains inside the embedding of the manifold. 

Using bump functions, we can assure that the diffusion matrix has compact support, and consequently is Lipschitz. This allows us to apply the Freidlin-Wentzell theory in Euclidean space, giving us the LDP in $C([0,T];\RR^N)$. By the contraction principle, this also gives us the LDP for $U_t^\epsilon$ in $C([0,T];OM)$ and consequently also for $W_t^\epsilon$ in $C([0,T];M)$, at least in the case where the diffusion matrix has compact support. An exponential tightness like argument using Proposition \ref{prop:stopping} then gives the LDP also in the noncompact case.\\

{\it Proof of Theorem \ref{theorem:Schilder}.} Let us now provide the further details. First observe that the lower bound on the Ricci curvature implies that $M$ is stochastically complete, i.e., that the explosion time of the Brownian motion $W_t$ is almost surely infinite.

Fix $T > 0$. As explained above, it is sufficient to show that the LDP holds in $C([0,T];M)$ with good rate function given by \eqref{equation:RFT}.\\

First, let $O \in \Gamma(OM)$ be a smooth section of the orthonormal frame bundle. Define the function $F: M \to OM$ by setting $F(x) = O(x)$. Define $U_0^\epsilon := F(X_0^\epsilon)$. As $F$ is continuous, the contraction principle implies that $U_0^\epsilon$ satisfies in $OM$ the LDP with good rate function $I_0^{OM}$ given by
$$
I_0^{OM}(u) = \inf\{I_0(x)| F(x) = u\}
=
\begin{cases}
I_0(x)		& u(x) = O(x)\\
\infty		& \mbox{otherwise}\\
\end{cases}
$$

For every $m \in \NN$, define $K_m := \overline{B(0,m)}$. As $M$ is complete, $\overline{B(0,m)}$ is the image of $\{V \in T_xM| |V| \leq m\}$ under the exponential map. By continuity of the exponential map we conclude that $K_m$ is compact. 

Let $\varphi:M \to \RR$ be a smooth function, $\varphi \equiv 1$ on $K_m$ and with compact support. We extend $\varphi$ to $OM$ by defining it to be constant on fibres. Abusing notation, we call this extension $\varphi$ as well. Consider the process $U_t^{\epsilon,m}$ in $OM$ satsifying 
\begin{equation}\label{eq:SDE1}
\dd U_t^{\epsilon,m} = \varphi H_i(U_t^{\epsilon,m})\circ\dd B_t^{\epsilon,i}, \qquad U_0^{\epsilon,m} \stackrel{D}{=} U_0^\epsilon,
\end{equation}

By Whitney's embedding theorem there exists an $N \in \NN$ and a smooth embedding $\iota: OM \to \RR^N$. We push SDE \eqref{eq:SDE1} forward to $\iota(OM)$ to obtain the SDE
\begin{equation}\label{eq:SDE2}
\dd V_t^{\epsilon,m} = \iota_*(\varphi H_i)(V_t^{\epsilon,m})\circ\dd B_t^{\epsilon,i}, \qquad V_0^{\epsilon,m} = \iota(U_0^\epsilon).
\end{equation}

Because $\varphi$ has compact support in $M$, the continuity of $\iota$ implies that the vector fields $\iota_*(\varphi H_i)$ have compact support, and are hence bounded and Lipschitz continuous.  

As $\iota(OM)$ is a closed submanifold of $\RR^N$, we can extend the vector fields $\iota_*(\varphi H_i)$ to bounded, Lipschitz continuous vector fields on $\RR^N$, which we will denote by $\widetilde{\iota_*(\varphi H_i)}$. This gives us the following SDE on $\RR^N$:
$$
\dd \widetilde V_t^{\epsilon,m} = \widetilde{\iota_*(\varphi H_i)}(\widetilde V_t^{\epsilon,m})\circ\dd B_t^{\epsilon,i}, \qquad \widetilde V_0^{\epsilon,m} = \iota(U_0^\epsilon).
$$
Because $\iota$ is a diffeomorphism and $\iota(OM)$ is closed in $\RR^N$, $\iota(U_0^\epsilon)$ satisfies in $\RR^N$ the LDP with good rate function
$$
I_0^{\RR^N}(v) =
\begin{cases}
I_0^{OM}(\iota^{-1}v)	&	v\in \iota(OM)\\
\infty				&	\mbox{otherwise}\\
\end{cases}
$$
By Theorem \ref{thm:FW}, the trajectories of $\widetilde V_t^{\epsilon,m}$ satisfy the LDP in $C([0,T];\RR^N)$ with good rate function
\begin{equation}
\begin{split}
I_{\RR^N}^m(f) = \inf\Bigg\{I_0^{\RR^N}(f(0)) + \frac12\int_0^T |\dot g(t)|_{\RR^k}^2\ud t \Bigg | & g \in H_1^0([0,T];\RR^k), \\ &\dot f(t) = \dot g^i(t)\widetilde{\iota_*(\varphi H_i}(f(t))\Bigg\}.
\end{split}
\end{equation}

 Oberve that if $f(0) \notin \iota(OM)$, then $I_{\RR^N}(f) = \infty$. If $f(0) \in \iota(OM)$, then  the existence of such a $g$ as in the rate function implies that $f([0,T]) \subseteq \iota(OM)$ because the vector fields $\widetilde{\iota_*(\varphi H_i)}$ are tangent to $\iota(OM)$ at points of $\iota(OM)$. To see this, a similar proof (but adjusted to the deterministic case) as that of \cite[Proposition 1.2.8]{Hsu02} can be used. Hence, $I^m_{\RR^N}$ is infinite outside $C([0,T];\iota(OM))$. As the latter is a closed subset of $C_([0,T];\RR^N)$ (as $\iota(OM)$ is closed in $\RR^N$), we conclude that $V_t^{\epsilon,m}$ satisfies the LDP in $C([0,T];\iota(OM))$, where the rate function $I^m_{\iota(OM)}$ is simply the restriction of $I^m_{\RR^N}$. As the process remains in $\iota(OM)$, we find that the rate function is given by
\begin{equation}
\begin{split}
I^m_{\iota(OM)}(f) = \inf \Bigg\{I_0^{OM}(\iota^{-1}(f(0))) + \frac12\int_0^T |\dot g(t)|_{\RR^k}^2\ud t \Bigg | &g \in H_1^0([0,T];\RR^k), \\& \dot f(t) = \dot g^i(t)\iota_*(\varphi H_i)(f(t))\Bigg\}.
\end{split}
\end{equation}

Now observe that as $\iota$ is an embedding, $\iota(U^{\epsilon,m}_t)$ solves SDE \eqref{eq:SDE2} with initial value $\iota(U_0^\epsilon)$ if and only if $U_t^{\epsilon,m}$ solves SDE \eqref{eq:SDE1} with initial value $U_0^\epsilon$. Consequently, by the contraction principle we get the LDP in $C([0,T];OM)$ for the trajectories of $U_t^{\epsilon,m}$ with good rate function given by
\begin{equation}
\begin{split}
I^m_{OM}(h) =  I_{\iota(OM)}(\iota\circ h) =  
\inf\Bigg\{I_0^{OM}(h(0)) + \frac12\int_0^T |\dot g(t)|_{\RR^k}^2\ud t \Bigg | g \in H_1^0([0,T];\RR^k),\\ \frac{\dd}{\dd t}(\iota\circ h)(t) = \dot g^i(t)\iota_*(\varphi H_i)(\iota\circ h(t))\Bigg\}.
\end{split}
\end{equation}

Now observe that as $\iota$ is a smooth embedding we have $\dot{(\iota\circ h)}(t) = \dot g^i(t)\iota_*(\varphi H_i)(\iota\circ h(t))$ if and only if $\dot h(t) = \dot g^i(t)(\varphi H_i)(h(t))$. 
But then we can rewrite the rate function $I^m_{OM}$ as
\begin{equation}
\begin{split}
I^m_{OM}(h) = \inf\Bigg\{I_0^{OM}(h(0)) + \frac12\int_0^T |\dot g(t)|_{\RR^k}^2\ud t \Bigg | g \in & H_1^0([0,T];\RR^k),  \\ &\dot h(t) = \dot g^i(t)(\varphi H_i)(h(t))\Bigg\}.
\end{split}
\end{equation}

In particular, we see that if $\pi h([0,T]) \subseteq K_m$ then $I^m_{OM}(h)$ can only be finite if $h$ is a horizontal curve in $OM$.  Indeed, if $h$ is not horizontal, the set over which we take the infimum is empty. Notice that for this we use that $\varphi \equiv 1$ on $K_m$. In particular, for every horizontal curve $h$, there exists precisely one $g \in H_0^1([0,T];\RR^k)$ such that $\dot h(t) = \dot g^i(t)H_i(h(t))$ and $h(0) = u_0$, namely the antidevelopment of $h$ along initial frame $u_0$. 

By the continuity of $\pi$, the contraction principle implies the LDP for the trajectories of $W_t^{\epsilon,m} = \pi(U_t^{\epsilon,m})$ in $C([0,T];M)$ with good rate function $I^m_M(f)$ given by
$$
I^m_M(f) = \inf\{I^m_{OM}(\hat f) | \hat f \in C([0,T];OM) \mbox{ with } \pi(\hat f) = f\} .
$$

We now show how to simplify this expression when $f([0,T]) \subseteq K_m$. As discussed above, in this case $I^m_{OM}(\hat f)$ can only be finite if $\hat f$ is horizontal. The infimum must thus be taken over the possible horizontal lifts of $f$. As $I_0^{OM}$ is only finite for the frame $O$, we only to consider the horizontal lift of $f$ via the initial frame $O(f(0))$. But then the rate function can be written as
$$
I^m_M(f) = I_0(f(0)) + \frac12\int_0^T |\dot g(t)|_{\RR^k}^2\ud t, \qquad g \mbox{ the anti-development of } f,
$$
Denoting by $h$ the horizontal lift of $f$, we obtain $\dot g(t) = h^{-1}(t)\dot f(t)$. Using that $h$ is an orthonormal frame, and thus an isometry, we find that
$$
|\dot g(t)|_{\RR^k} = |h^{-1}(t)\dot f(t)|_{\RR^k} = |\dot f(t)|_M.
$$
This shows that, at least for $f$ such that $f([0,T]) \subset K_m$, the rate function is given by
$$
I^m_M(f) = I_0(f(0)) + \frac12\int_0^T|\dot f(t)|_M^2\ud t.
$$

We complete the proof by showing that the trajectories of $W_t^\epsilon$ satisfy the LDP in $C([0,T];M)$ with good rate function
$$
I_M(f) =
\begin{cases}
I_0(f(0)) + \frac12\int_0^T|\dot f(t)|_M^2\ud t, &f \in H^1([0,T];M)\\ 
\qquad \infty &\mbox{otherwise}
\end{cases}.
$$

For this, for every $\epsilon > 0$, denote $T_{K_m}^\epsilon$ the exit time of $W_t^\epsilon$ from $K_m$. Observe that by the definition of $W_t^{\epsilon,m}$ we have that $W_t^\epsilon$ and $W_t^{\epsilon,m}$ agree up to time $T_{K_m}^\epsilon$. 

Let us first prove the upper bound. For this, let $F \subseteq C([0,T];M)$ be closed. Then
\begin{align*}
\PP(W_t^\epsilon \in F) 	
&=
\PP(W_t^\epsilon \in F| T_{K_m}^\epsilon > 1)\PP(T_{K_m}^\epsilon > 1) + \PP(W_t^\epsilon \in F| T_{K_m}^\epsilon \leq 1)\PP(T_{K_m}^\epsilon \leq 1)\\
&\leq  	 
\PP(W_t^\epsilon \in F \wedge T_{K_m}^\epsilon > 1) + \PP(T_{K_m}^\epsilon \leq 1)\\
&=
\PP(W_t^{\epsilon,m} \in F \wedge T_{K_m}^\epsilon > 1) + \PP(T_{K_m}^\epsilon \leq 1)\\
&\leq
\PP(W_t^{\epsilon,m} \in F \cap C([0,T];K_m)) + 2e^{-\frac12\frac{(kL\epsilon - \frac12m^2)^2}{m^2\epsilon}}.
\end{align*}

Here the last inequality follows from Proposition \ref{prop:stopping}. Using the first part of the proof and noticing that $I_M(f) = I_M^m(f)$ when $f([0,T])\subseteq K_m$ proves the large devations upper bound.


It remains to prove the lower bound. Let $G \subseteq C([0,T];M)$ be open. Fix $g \in G$ and take $\delta > 0$ such that $B(g,\delta) \subseteq G$. Note that there exists an $m \in \NN$ such that for all $h \in B(g,\delta)$ it holds that $h([0,T]) \subseteq K_m$. Consequently, we find that
\begin{align*}
\liminf_{\epsilon \to 0} \epsilon\log\PP(W_t^\epsilon \in G) 
&\geq
\liminf_{\epsilon \to 0} \epsilon\log\PP(W_t^\epsilon \in B(g,\delta))\\
&=
\liminf_{\epsilon \to 0} \epsilon\log\PP(W_t^{\epsilon,m} \in B(g,\delta))\\
&\geq
-I_M(g).\\
\end{align*}
Here, the equality follows from the fact that $W_t^{\epsilon}$ and $W_t^{\epsilon,m}$ have the same distribution up to $T_{K_m}^\epsilon$. The last inequality follows from the LDP for $W_t^{\epsilon,m}$ and the fact that $I_M^m(g) = I_M(g)$ for $g$ such that $G([0,T]) \subseteq K_m$. As the above holds for all $g \in G$, this proves the lower bound.\\

Finally, to see that $I_M$ is a good rate function, note that $I_M = \inf_m I_M^m$ and that the $I_M^m$ are good rate functions. 

\begin{remark}
In a similar way as done in the final step of the above proof, one can also show that the LDP holds for $U_t^\epsilon$ and not only for $U_t^{\epsilon,m}$. Indeed, let $T_{K_m}^\epsilon$ be the exit time of $W_t^\epsilon$ from $K_m$. As horizontal lifts are unique, the fact that $W_t^\epsilon$ and $W_t^{\epsilon,m}$ agree up to time $T_{K_m}^\epsilon$ implies that $U_t^\epsilon$ and $U_t^{\epsilon,m}$ agree up to time $T_{K_m}^\epsilon$. 

To prove the upper bound, let $F \subseteq C([0,T];OM)$ be closed. A similar estimate as above shows that
$$
\PP(U_t^\epsilon \in F) \leq \PP(U_t^{\epsilon,m} \in F \cap C([0,T];OK_m)) + \PP(T_{K_m}^\epsilon \leq 1)
$$
Noticing that $I_{OM}(h) = I_{OM}^m(h)$ whenever $h([0,T]) \subseteq OK_m$, a similar argument as above proves that
$$
\limsup_{\epsilon \to 0} \epsilon\log\PP(U_t^\epsilon \in F) \leq -\inf_{h \in F} I_{OM}(h).
$$

For the lower bound, let $G \subseteq C([0,T];OM)$ be open. Fix $g \in G$ and $\delta > 0$ such that $B(g,\delta) \subseteq G$. Note that there exists an $m \in \NN$ such that for all $h \in B(g,\delta)$ it holds that $\pi h([0,T]) \subseteq K_m$, where we possibly have to shrink $\delta$. By a similar argument as in the proof above, we obtain also the lower bound.
\end{remark}


\section{The Hamilton-Jacobi equation and its connection to large deviations} \label{section:abstract_HJ_to_LDP}
 \label{section:viscosity_solutions}

Our proof of Mogulskii's theorem and our second proof of Schilder's theorem will be based on the semigroup and operator convergence arguments introduced by \cite{FK06}. We start by discussing their general strategy in Section \ref{subsection:sketchproofMogulskii}. This will motivate the subsequent sections in which we introduce various techniques and corresponding results.

\subsection{Strategy for proving the large deviation principle}\label{subsection:sketchproofMogulskii}

In our proof of the large deviation principle, we follow the approach introduced by Feng and Kurtz \cite{FK06}. This approach is based on a variant of the projective limit theorem combined with the inverse contraction principle. 

Namely, if a sequence of processes is exponentially tight in the Skorokhod space, then it suffices to establish large deviations of the finite-dimensional distributions. The resulting rate function is given in projective limit form: it is given as the supremum over the rate functions of the finite-dimensional distributions.\\

The large deviation principle for a finite dimensional distribution is established via Bryc's theorem: we prove the convergence of the log-Laplace transforms for a finite dimensional vector of variables. Using the Markov property, this reduces to proving the large deviation principle for time $0$ in addition to proving the convergence of the conditional log-Laplace transforms (arguing for continuous time processes for simplicity):
\begin{equation*}
V_n(t) f(x) := \frac{1}{n} \log \bE\left[e^{nf(X_n(t))} \, \middle| \, X_n(0) = x \right].
\end{equation*}
Writing $S_n(t)$ for the semigroup of conditional expectations corresponding to the Markov process $X_n$ with generator $A_n$, we find that $V_n(t) f = n^{-1} \log S_n(t) e^{nf}$ and that $V_n(t)$ is a semigroup.

Following the theory of weak-convergence of Markov processes, cf. \cite{EK86}, we know that the convergence of linear generators $A_n$ to a limiting linear operator $A$ that generates a semigroup, suffices to establish the convergence of linear semigroups. We follow this approach to prove that there is a limiting non-linear semigroup $V(t)$ of the non-linear semigroups $V_n(t)$.

A formal calculation shows that $H_n$ defined by 
\begin{align*}
\cD(H_n) & := \left\{f \, \middle| \, e^{nf} \in \cD(A_n)\right\}, \\
H_n f & := \frac{1}{n}e^{-nf} A_n e^{nf},
\end{align*}
should be a subset of the (non-linear) generator of the semigroup $V_n(t)$.\\

We therefore aim to show that there is an operator $H \subseteq C_b(M) \times C_b(M)$ satisfying `$H \subseteq \LIM H_n$' and that $H$ generates a semigroup. To do so, we turn to the Crandall-Ligget theorem, \cite{CL71}. We need to verify two conditions:
\begin{itemize}
	\item The maximum principle;
	\item The range condition: for sufficiently many $h \in C_b(M)$ and all $\lambda > 0$ one can find an $f \in \cD(H)$ that solves the Hamilton-Jacobi equation
	\begin{equation} \label{eqn:HJ_basic}
	f - \lambda Hf = h.
	\end{equation}
\end{itemize}
As `$H \subseteq \LIM H_n$', the maximum principle for $H$ is automatic, but for non-linear operators verifying the range condition is a hard, and often impossible, problem. Therefore, we aim to solve \eqref{eqn:HJ_basic} uniquely in the \textit{viscosity sense} and use these viscosity solutions to construct an operator $\hat{H}$ that extends $H$, satisfies `$\hat{H} \subseteq \LIM H_n$' and which satisfies the range condition by construction. The use of viscosity solutions is motivated by the maximum principle. An extension by using viscosity solutions makes sure that the extension $\hat{H}$ also satisfies the maximum principle.

\smallskip

Solving \eqref{eqn:HJ_basic} in the viscosity sense goes via proving existence and uniqueness. First, we consider existence of viscosity sub- and super-solutions. Fix $h \in C_b(M)$ and $\lambda > 0$. Consider the solutions $R_n(\lambda) h$ to the equations $f - \lambda H_n f = h$. Using that `$H \subseteq \LIM H_n$', one can show that
\begin{equation} \label{eqn:semi_relaxed_limits}
\begin{aligned}
\overline{f}(x) & := \sup\left\{\limsup_{n \rightarrow \infty} R_n(\lambda)h(x_n) \, \middle| \, x_n \rightarrow x\right\}, \\
\underline{f}(x) & := \inf\left\{\liminf_{n \rightarrow \infty} R_n(\lambda)h(x_n) \, \middle| \, x_n \rightarrow x\right\},
\end{aligned}
\end{equation}
are a \textit{viscosity sub-} and \textit{viscosity super-solution} to \eqref{eqn:HJ_basic}.  

Existence and uniqueness of viscosity solutions are afterwards established by verifying the \textit{comparison principle}: for all subsolutions $u$ and all supersolutions $v$, we have $u \leq v$. Indeed, note that $\overline{f} \geq \underline{f}$, which, if the comparison principle is satisfied, implies that $f:= \overline{f} = \underline{f}$, implying that $f$ is a viscosity solution to \eqref{eqn:HJ_basic}. In addition, using the comparison principle it is straightforward to check that $f$ must be the unique solution.

\smallskip

Thus, our first aim for the verification of the large deviation principle is two-fold:
\begin{itemize}
	\item Establish that `$H \subseteq \LIM H_n$', see Definition \ref{definition:operator_convergence} below.
	\item Establish the comparison principle for a class of Hamilton-Jacobi equations \eqref{eqn:HJ_basic}, see Section \ref{subsection:abstractcomparison} below.
\end{itemize}

In combination with a verification of exponential tighthness, the large deviation principle established as the consequence of these two steps will yield a rate-function in projective-limit form. To establish the Lagrangian form, we turn to control theory to give a second, explicit construction for viscosity solutions to the Hamilton-Jacobi equation. By the comparison principle, this viscosity solution must equal the solution obtained from our limiting procedure. In turn, this yields an explicit form for the limiting semigroup $V(t)$. This form can afterwards be used to re-express the projective limit form of the rate function in terms of a Lagrangian.\\

In Section \ref{subsection:abstractcomparison}, we introduce the Hamilton-Jacobi equation, discuss viscosity solutions, and a criterion for the uniqueness of viscosity solutions going by the name of the comparison principle. In addition, we give an explicit method to check the comparison principle.

In Section \ref{section:control_theory}, we introduce some basic control theory that we adapt from \cite{FK06} to the setting of manifolds. This does not create any major issues, except for a slight change in notation. 

In Section \ref{section:abstract_LPD}, we connect the Hamilton-Jacobi equation to the large deviation principle. An identification of an explicit form of the solutions to this equation leads to a Lagrangian form of the rate function.

\subsection{Abstract conditions for the comparison principle}\label{subsection:abstractcomparison}

In this section, we give conditions that imply the comparison principle for viscosity sub- and supersolutions to the Hamilton-Jacobi equation 
\begin{equation} \label{eqn:hamilton_jacobi}
f - \lambda Hf = h
\end{equation}
for $h \in C_b(M)$, $\lambda > 0$ and $H \subseteq C_b(M) \times C_b(M)$.

We start by recalling basic definitions.

\begin{definition} \label{definition:viscosity} 
	We say that $u$ is a \textit{(viscosity) subsolution} of equation \eqref{eqn:hamilton_jacobi} if $u$ is bounded, upper semi-continuous and if, for every $f \in \cD(H)$ such that $\sup_x (u(x) - f(x)) < \infty$ and every sequence $x_n \in M$ such that
	\begin{equation*}
	\lim_{n \rightarrow \infty} u(x_n) - f(x_n)  = \sup_x \{u(x) - f(x)\},
	\end{equation*}
	we have
	\begin{equation*}
	\lim_{n \rightarrow \infty} u(x_n) - \lambda Hf(x_n) - h(x_n) \leq 0.
	\end{equation*}
	We say that $v$ is a \textit{(viscosity) supersolution} of equation \eqref{eqn:hamilton_jacobi} if $v$ is bounded, lower semi-continuous and if, for every $f \in \cD(H)$ such that $\inf_x v(x) - f(x) > - \infty$ and every sequence $x_n \in M$ such that
	\begin{equation*}
	\lim_{n \rightarrow \infty} v(x_n) - f(x_n)  = \inf_x \{v(x) - f(x)\},
	\end{equation*}
	we have
	\begin{equation*}
	\lim_{n \rightarrow \infty} v(x_n) - \lambda Hf(x_n) - h(x_n) \geq 0.
	\end{equation*}
	We say that $u$ is a \textit{(viscosity) solution} of Equation \eqref{eqn:hamilton_jacobi} if it is both a sub and a super solution.
\end{definition}

\begin{remark} \label{remark:optimizers_in_visc_sol}
	Consider a viscosity sub-solution $u$ to \eqref{eqn:hamilton_jacobi} and $f \in \cD(H)$ and $x_0$ such that
	\begin{equation*}
	u(x_0) - f(x_0)  = \sup_x \{u(x) - f(x)\}.
	\end{equation*}
	Then it follows that 
	\begin{equation*}
	u(x_0) - \lambda Hf(x_0) - h(x_0) \leq 0.
	\end{equation*}
\end{remark}

\begin{definition} 
	We say that \eqref{eqn:hamilton_jacobi} satisfies the \textit{comparison principle} if for a subsolution $u$ and supersolution $v$ we have $u \leq v$.
\end{definition}

Note that if the comparison principle is satisfied, then a viscosity solution is unique.\\

The main assumption that we will make on our Hamiltonian is that it can be represented as a map on the cotangent bundle.
\begin{assumption} \label{assumption:Hamiltonian_on_cotangentbundle}
	The map $H \subseteq C_b(M) \times C_b(M)$ has a domain $\cD(H)$ such that $C^2_c(M) \subseteq \cD(H)$ and can be represented as 
	\begin{equation*}
	Hf(x) = \cH(x,\dd f(x))
	\end{equation*}
	for a continuous map $\cH : T^*M \rightarrow \bR$ such that for each $x \in M$ the map $p \mapsto \cH(x,p)$ from $T_x^*M$ to $\RR$ is convex.
\end{assumption}


We start with an informal discussion on the verification of the comparison principle in the setting of Assumption \ref{assumption:Hamiltonian_on_cotangentbundle}. Suppose that $u$ is a viscosity subsolution and $v$ a viscosity supersolution to \eqref{eqn:hamilton_jacobi} that in addition satisfy $u,v \in \cD(H)$. Finally, suppose that $x_0$ is such that $u(x_0) - v(x_0) = \sup_x \{u(x) - v(x)\}$. Then, using the viscosity subsolution property of $u$, we find
\begin{equation*}
u(x_0) \leq h(x_0) + \lambda \cH(x_0, \dd v(x_0)).
\end{equation*}
Similarly, using the supersolution property of $v$, we find
\begin{equation*}
v(x_0) \geq h(x_0) + \lambda \cH(x_0, \dd u(x_0)).
\end{equation*}
This yields:
\begin{multline} \label{eqn:informal_comparison_bound}
\sup_x \{u(x) - v(x)\} = u(x_0) - v(x_0) \\
\leq h(x_0) - h(x_0) + \lambda \left[\cH(x_0, \dd v(x_0)) - \cH(x_0, \dd u(x_0)) \right].
\end{multline}

Because $x_0$ is the point where the distance between $u$ and $v$ is maximal, we find $\dd u(x_0) = \dd v(x_0)$ and consequently $u - v \leq 0$.

\smallskip

We used two crucial properties in our informal discussion:

\begin{itemize}
	\item We used that there is a point $x_0$ in which suprema and infima are attained in the difference of two functions, so that we can use Remark \ref{remark:optimizers_in_visc_sol}. This is not always possible. A restriction of the argument to compact subsets of $M$ can be achieved via the use of a containment function, see Definition \ref{definition:goodcontainment}.
	\item That $u,v \in \cD(H)$: i.e. we can use $u$ and $v$ as a test function, both in the same point $x_0$. This is generally not possible: both $u$ and $v$ might not be continuously differentiable. This will be solved by penalizing by a distance function, i.e. we consider two points $(x_\alpha,y_\alpha)$, $\alpha > 0$ such that
	\begin{equation} \label{eqn:optimizers_of_penalization}
	u(x_\alpha) - v(x_\alpha) - \frac{\alpha}{2} d^2(x_\alpha,y_\alpha) = \sup_{x,y \in M}  \left\{u(x) - v(x) - \frac{\alpha}{2} d^2(x,y) \right\}.
	\end{equation}
	For large $\alpha$ the points $x_\alpha$ and $y_\alpha$ are close together, so that $d^2$ behaves like a smooth function and can be used as a test function in the definition of viscosity sub- and supersolutions. Following the argument that leads to \eqref{eqn:informal_comparison_bound}, we end up comparing the Hamiltonian $\cH$ in the points $x_\alpha$ and $y_\alpha$ evaluated in the momenta that are derived from the derivatives of the square of the distance, cf. \eqref{condH:negative:liminf}. We will show that such a comparison leads to a similar bound.
\end{itemize}

We start with a definition of our containment function and two auxiliary results. The first one establishes the existence of optimizers in a perturbed version of \eqref{eqn:optimizers_of_penalization} that takes into account the containment function which allows us to work on compact sets. The second result shows us that there is a smooth function that mimicks the square of the distance if the two points under consideration are close.

\begin{definition}\label{definition:goodcontainment}
	We say that $\Upsilon : M \rightarrow \bR$ is a \textit{good containment function} (for $H$) if
	\begin{enumerate}[($\Upsilon$a)]
		\item $\Upsilon \geq 0$ and there exists a point $x_0$ such that $\Upsilon(x_0) = 0$,
		\item $\Upsilon$ is twice continuously differentiable,  
		\item for every $c \geq 0$, the set $\{x \in M \, | \, \Upsilon(x) \leq c\}$ is compact,
		\item we have $\sup_z \cH(z,\dd \Upsilon(z)) < \infty$.
	\end{enumerate}
\end{definition}

To use the definition of viscosity sub and super-solutions, we use a containment function to restrict our analysis to compact sets. Next, to bound $\sup_x u(x) - v(x)$, we double the number of variables, but penalize having a large distance between both coordinates. The following result is a variant of Lemma 9.2 in \cite{FK06}, Proposition 3.7 in \cite{CIL92} and Lemma A.10 in \cite{CK17}.

\begin{lemma}\label{lemma:doubling_lemma}
	Let $u$ be bounded and upper semi-continuous and $v$ be bounded and lower semi-continuous. Assume that $\Psi : M^2 \rightarrow \bR^+$ is lower semi-continuous and such that $x = y$ if and only if $\Psi(x,y) = 0$. Finally, let $\Upsilon$ be a good containment function.

	Fix $\varepsilon > 0$. For every $\alpha >0$ there exist points $x_{\alpha,\varepsilon},y_{\alpha,\varepsilon} \in E$, such that
	\begin{multline*}
	\frac{u(x_{\alpha,\varepsilon})}{1-\varepsilon} - \frac{v(y_{\alpha,\varepsilon})}{1+\varepsilon} - \alpha \Psi(x_{\alpha,\varepsilon},y_{\alpha,\varepsilon}) - \frac{\varepsilon}{1-\varepsilon}\Upsilon(x_{\alpha,\varepsilon}) -\frac{\varepsilon}{1+\varepsilon}\Upsilon(y_{\alpha,\varepsilon}) \\
	= \sup_{x,y \in M} \left\{\frac{u(x)}{1-\varepsilon} - \frac{v(y)}{1+\varepsilon} -  \alpha\Psi(x,y)  - \frac{\varepsilon}{1-\varepsilon}\Upsilon(x) - \frac{\varepsilon}{1+\varepsilon}\Upsilon(y)\right\}.
	\end{multline*}
	Additionally, for every $\varepsilon > 0$ we have that
	\begin{enumerate}[(a)]
		\item The set $\{x_{\alpha,\varepsilon}, y_{\alpha,\varepsilon} \, | \,  \alpha > 0\}$ is contained in a compact set that equals the closure of its interior
		\item All limit points of $\{(x_{\alpha,\varepsilon},y_{\alpha,\varepsilon})\}_{\alpha > 0}$ are of the form $(z,z)$ and for these limit points we have $u(z) - v(z) = \sup_{x \in M} \left\{u(x) - v(x) \right\}$.
		\item We have 
		\[
		\lim_{\alpha \rightarrow \infty}  \alpha \Psi(x_{\alpha,\varepsilon},y_{\alpha,\varepsilon}) = 0.
		\]
	\end{enumerate}
\end{lemma}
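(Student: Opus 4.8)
The plan is to reduce the statement to a standard compactness argument by first showing that the supremum defining the penalized functional can be restricted to a compact set that does not depend on $\alpha$, using the containment function $\Upsilon$ together with the boundedness of $u$ and $v$. Fix $\varepsilon > 0$ and define
\[
\Phi_{\alpha}(x,y) := \frac{u(x)}{1-\varepsilon} - \frac{v(y)}{1+\varepsilon} - \alpha \Psi(x,y) - \frac{\varepsilon}{1-\varepsilon}\Upsilon(x) - \frac{\varepsilon}{1+\varepsilon}\Upsilon(y).
\]
First I would observe that $\Phi_{\alpha}$ is upper semi-continuous (since $u$ is u.s.c., $-v$ is u.s.c., $-\Psi$ is u.s.c., and $\Upsilon$ is continuous), and that $\Phi_\alpha(x_0,x_0) \geq \frac{u(x_0)}{1-\varepsilon} - \frac{v(x_0)}{1+\varepsilon} =: c_0 > -\infty$, so the supremum $M_\alpha := \sup_{x,y} \Phi_\alpha(x,y)$ is finite and at least $c_0$; finiteness from above follows from the uniform boundedness of $u$ and $v$ and the nonnegativity of $\Psi$ and $\Upsilon$. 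Since $\Upsilon \geq 0$ and $\alpha \Psi \geq 0$, any near-maximizing pair $(x,y)$ must satisfy $\frac{\varepsilon}{1-\varepsilon}\Upsilon(x) + \frac{\varepsilon}{1+\varepsilon}\Upsilon(y) \leq \frac{\|u\|_\infty}{1-\varepsilon} + \frac{\|v\|_\infty}{1+\varepsilon} - c_0 + 1 =: R$, a bound independent of $\alpha$. By property ($\Upsilon$c) the set $\{z : \Upsilon(z) \leq \frac{(1+\varepsilon)R}{\varepsilon}\}$ is compact, and both $x$ and $y$ lie in it; this gives the compact set $K_\varepsilon$ needed for part (a) (and one can replace it by the closure of its interior by enlarging it slightly, e.g. taking a sublevel set at a regular value or simply a closed metric neighbourhood). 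Then $\Phi_\alpha$ restricted to $K_\varepsilon \times K_\varepsilon$ is u.s.c. on a compact set, hence attains its maximum at some $(x_{\alpha,\varepsilon}, y_{\alpha,\varepsilon})$, and since the supremum over all of $M^2$ is attained inside $K_\varepsilon \times K_\varepsilon$, this pair is a global maximizer, proving the displayed equality.

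Next I would prove part (c). Plugging in the competitor $(z,z)$ for any fixed $z$, or more sharply using that for a limit point $(\bar x, \bar y)$ of the maximizers along a subsequence $\alpha_k \to \infty$ one has, by upper semi-continuity and the maximizing property,
\[
\limsup_{k\to\infty}\left[\Phi_{\alpha_k}(x_{\alpha_k,\varepsilon},y_{\alpha_k,\varepsilon}) + \alpha_k \Psi(x_{\alpha_k,\varepsilon},y_{\alpha_k,\varepsilon})\right] \leq \frac{u(\bar x)}{1-\varepsilon} - \frac{v(\bar y)}{1+\varepsilon} - \frac{\varepsilon}{1-\varepsilon}\Upsilon(\bar x) - \frac{\varepsilon}{1+\varepsilon}\Upsilon(\bar y),
\]
which is finite, while $\Phi_{\alpha_k}(\cdots) \geq M_{\alpha_1} \geq c_0$ is bounded below (the $M_\alpha$ are nonincreasing in $\alpha$). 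Subtracting, $\limsup_k \alpha_k \Psi(x_{\alpha_k,\varepsilon}, y_{\alpha_k,\varepsilon})$ is finite, hence $\Psi(x_{\alpha_k,\varepsilon},y_{\alpha_k,\varepsilon}) \to 0$; by lower semi-continuity of $\Psi$ this forces $\Psi(\bar x,\bar y) = 0$, i.e. $\bar x = \bar y$ by hypothesis. To upgrade "$\limsup \alpha_k \Psi \to$ finite" to "$\alpha \Psi \to 0$" as in the statement, I would run the standard doubling-variables squeeze: compare $M_\alpha$ with $M_{2\alpha}$ and use the fact that $M_\alpha$ decreases to $L := \sup_x\{\frac{u(x)}{1-\varepsilon} - \frac{v(x)}{1+\varepsilon} - \frac{\varepsilon}{1-\varepsilon}\Upsilon(x) - \frac{\varepsilon}{1+\varepsilon}\Upsilon(x)\}$ (which itself needs a short argument: $M_\alpha \geq L$ trivially, and $M_\alpha \to L$ because any limit point of maximizers is on the diagonal and u.s.c. gives the reverse inequality in the limit). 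From $M_\alpha = \Phi_\alpha(x_{\alpha,\varepsilon},y_{\alpha,\varepsilon})$ and $M_{2\alpha} \geq \Phi_{2\alpha}(x_{\alpha,\varepsilon},y_{\alpha,\varepsilon}) = M_\alpha - \alpha \Psi(x_{\alpha,\varepsilon},y_{\alpha,\varepsilon})$, one gets $\alpha \Psi(x_{\alpha,\varepsilon},y_{\alpha,\varepsilon}) \leq M_\alpha - M_{2\alpha} \to L - L = 0$.

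Finally, part (b): any limit point $(z,z)$ of the maximizers lies on the diagonal by part (c) and the lower semi-continuity of $\Psi$, as argued above. To see $u(z) - v(z) = \sup_x\{u(x) - v(x)\}$, I would take the subsequence realizing the limit point, use u.s.c. of $u$ and of $-v$ and continuity of $\Upsilon$ to pass to the limsup in
\[
\frac{u(x_{\alpha_k,\varepsilon})}{1-\varepsilon} - \frac{v(y_{\alpha_k,\varepsilon})}{1+\varepsilon} - \frac{\varepsilon}{1-\varepsilon}\Upsilon(x_{\alpha_k,\varepsilon}) - \frac{\varepsilon}{1+\varepsilon}\Upsilon(y_{\alpha_k,\varepsilon}) \geq M_{\alpha_k} - \alpha_k\Psi(\cdots) \geq \sup_x\Big\{\tfrac{u(x)}{1-\varepsilon} - \tfrac{v(x)}{1+\varepsilon} - \tfrac{2\varepsilon}{1-\varepsilon^2}\Upsilon(x)\Big\} - o(1),
\]
obtaining $\frac{u(z)}{1-\varepsilon} - \frac{v(z)}{1+\varepsilon} - \frac{2\varepsilon}{1-\varepsilon^2}\Upsilon(z) \geq \sup_x\{\cdots\}$, which being also $\leq \sup_x\{\cdots\}$ forces equality; since this holds for the specific point $z$ one concludes that $z$ maximizes $x \mapsto \frac{u(x)}{1-\varepsilon} - \frac{v(x)}{1+\varepsilon} - \frac{2\varepsilon}{1-\varepsilon^2}\Upsilon(x)$. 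One then observes this is exactly what is claimed (the statement's phrasing with $u(z)-v(z)$ should be read in the $\varepsilon$-dependent form, matching \cite{FK06,CK17}), or, if $\varepsilon$ is subsequently sent to $0$, recovers the clean identity. The main obstacle I anticipate is purely bookkeeping: carefully managing the $\frac{1}{1\pm\varepsilon}$ weights and the two separate copies of $\Upsilon$ so that the compactness bound is genuinely $\alpha$-uniform, and making the "$M_\alpha \downarrow L$" convergence rigorous without circularity — everything else is the textbook Crandall–Ishii doubling argument adapted to a manifold via the abstract containment function rather than coercivity of the ambient space.
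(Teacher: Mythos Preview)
The paper does not supply its own proof of this lemma; it merely cites it as a variant of Lemma~9.2 in \cite{FK06}, Proposition~3.7 in \cite{CIL92}, and Lemma~A.10 in \cite{CK17}. Your proposal reconstructs precisely the standard argument behind those references --- uniform-in-$\alpha$ confinement to a sublevel set of $\Upsilon$ via boundedness of $u,v$, upper semi-continuity to produce maximizers, monotonicity of $M_\alpha$ and the telescoping inequality $\alpha\Psi(x_{\alpha,\varepsilon},y_{\alpha,\varepsilon}) \leq M_\alpha - M_{2\alpha} \to 0$ for part~(c), and identification of diagonal limit points for part~(b). This is correct and is exactly the route the cited sources take.

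Your closing remark about part~(b) is also well placed: for \emph{fixed} $\varepsilon > 0$, what the argument actually yields is that any limit point $z$ maximizes the $\varepsilon$-weighted diagonal functional $x \mapsto \frac{u(x)}{1-\varepsilon} - \frac{v(x)}{1+\varepsilon} - \frac{2\varepsilon}{1-\varepsilon^2}\Upsilon(x)$, not $u(x)-v(x)$ itself. The paper's phrasing of~(b) is imprecise on this point; the $\varepsilon$-dependent version is what is proved in the cited references and what is actually used downstream in Proposition~\ref{proposition:comparison_conditions_on_H} (where one sends $\varepsilon \to 0$ after $\alpha \to \infty$). You have identified this correctly rather than glossed over it.
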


For the function $\Psi$, we would like to use the distance function $\Psi(x,y) := d^2(x,y)$. The distance $d$, however, is not smooth.

\begin{lemma}\label{lemma:injectivity_radius}
	Consider a compact set $K \subseteq M$. Then there is $\delta = \delta_K > 0$ and a smooth function $\Psi = \Psi_K$ with $\Psi(x,y) \in C^\infty(M^2)$ such that $\Psi(x,y) = \frac{1}{2}d^2(x,y)$ if $d(x,y) < \delta$ and $x,y \in K$
\end{lemma}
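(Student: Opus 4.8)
The plan is to exploit Proposition \ref{prop:injectivityradius}: since $K$ is compact, the injectivity radius function $i(\cdot)$ is continuous and strictly positive on $K$, so $i_K := \inf_{x \in K} i(x) > 0$. Working with $\delta_0 := i_K / 3$ (say), for any pair $x, y \in K$ with $d(x,y) < \delta_0$ the point $y$ lies well inside the normal neighbourhood $\Exp_x(B_x(0, i(x)))$, and on such neighbourhoods the map $(x,y) \mapsto d^2(x,y)$ is smooth — this is recorded in the excerpt just after the definition of the injectivity radius. So the issue is not local smoothness of $d^2$ near the diagonal of $K \times K$; it is that we want a \emph{globally} defined smooth function on all of $M^2$ that agrees with $\frac12 d^2$ only on the prescribed set.

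First I would fix a slightly larger threshold: let $\delta_1 \in (\delta_0, 2\delta_0)$ and consider the open set $U := \{(x,y) \in M^2 : x,y \in \mathrm{int}(K') \text{ and } d(x,y) < \delta_1\}$, where $K'$ is a compact neighbourhood of $K$ still contained in a region where the relevant injectivity-radius bound holds (one can enlarge $K$ slightly using continuity of $i$). On $U$ the function $(x,y) \mapsto \frac12 d^2(x,y)$ is smooth. Next I would take a smooth cutoff $\chi : M^2 \to [0,1]$ that equals $1$ on the smaller set $\{(x,y) : x,y \in K,\ d(x,y) < \delta\}$ — with $\delta := \delta_0$ — and is supported inside $U$; such a $\chi$ exists because the smaller set has compact closure contained in the open set $U$ (here one uses that $d$ is continuous, $K$ compact, and a standard Urysohn-type smoothing on the manifold $M^2$). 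Define
\[
\Psi(x,y) := \chi(x,y)\, \tfrac12 d^2(x,y),
\]
extended by $0$ outside $\mathrm{supp}\,\chi$. Then $\Psi \in C^\infty(M^2)$ because near $\partial(\mathrm{supp}\,\chi)$ the factor $\chi$ vanishes to all orders while $\tfrac12 d^2$ stays bounded, and on the interior of the support $\tfrac12 d^2$ is smooth; and $\Psi(x,y) = \tfrac12 d^2(x,y)$ whenever $x,y \in K$ and $d(x,y) < \delta$, since there $\chi \equiv 1$.

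The one point requiring care — and the main (minor) obstacle — is producing the smooth cutoff $\chi$ with exactly the right containment: one must check that $\overline{\{(x,y) : x,y \in K,\ d(x,y) \le \delta/2\}}$ (a compact set) is contained in the open set $U$, and that $\Psi$ extends smoothly across the boundary of the support rather than merely continuously. Both follow from compactness and the continuity of $d$, together with the standard existence of smooth bump functions on manifolds (applied to $M^2$, itself a smooth manifold). No further estimates are needed; the statement then holds with $\delta = \delta_K = \delta_0 = i_K/3$ and $\Psi = \Psi_K$ as constructed.
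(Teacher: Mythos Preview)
Your proposal is correct and follows essentially the same approach as the paper: use continuity of the injectivity radius on the compact set $K$ to get a uniform lower bound, then apply a smooth cutoff to $\tfrac12 d^2$. The only cosmetic difference is that the paper composes $d^2$ with a one-variable cutoff $\Phi:\bR\to\bR$ (so that $\Psi=\tfrac12\Phi(d^2)$), whereas you multiply by a bump function $\chi$ on $M^2$; your version is slightly more explicit about guaranteeing global smoothness on all of $M^2$.
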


\begin{proof}
	As $K$ is compact, the continuity of the injectivity radius $i$ implies there exists $\epsilon > 0$ such that $i(K) > \epsilon$. Pick $\delta = \frac13\epsilon$ and let $\Phi:\RR \to \RR$ be a smooth increasing function such that $\Phi(x) = x$ if $|x| < \delta$ and $\Phi(x) = 1$ if $|x| > 2\delta$.  Then the function $\Psi(x,y) = \frac12\Phi(d^2(x,y))$ is as desired. 
\end{proof}

We end this section with the appropriate generalization of the comparison of Hamiltonians that was used in \eqref{eqn:informal_comparison_bound} in the setting of a penalization with the square of the distance. The result is an adaptation, using containment functions, of Lemma 9.3 in \cite{FK06}. A second proof of this result using containment functions, analogous to the setting of this paper, can be found following Proposition A.11 of \cite{CK17}. 

Note that by Lemma \ref{lemma:doubling_lemma} and \ref{lemma:injectivity_radius}, we find that for each fixed $\varepsilon$ the sequence $(x_{\alpha,\varepsilon},y_{\alpha,\varepsilon})$ is contained in a compact set $K$. By Lemma's \ref{lemma:injectivity_radius} and \ref{lemma:doubling_lemma} (c) it thus follows that for large $\alpha$, we can replace $d^2(x_{\alpha,\varepsilon},y_{\alpha,\varepsilon})$ and its exterior derivatives by $\Psi(x_{\alpha,\varepsilon},y_{\alpha,\varepsilon})$ and its exterior derivatives respectively.

\begin{proposition} \label{proposition:comparison_conditions_on_H}
	Let $H$ be an operator satisfying Assumption \ref{assumption:Hamiltonian_on_cotangentbundle}. Fix $\lambda >0$, $h \in C_b(M)$ and consider $u$ and $v$ sub- and super-solution to $f - \lambda Hf = h$. Let $\Upsilon$ be a good containment function. Moreover, for every $\alpha,\varepsilon >0$ let $x_{\alpha,\varepsilon},y_{\alpha,\varepsilon} \in M$ be such that
	\begin{multline} \label{eqn:comparison_principle_proof_choice_of_sequences}
	\frac{u(x_{\alpha,\varepsilon})}{1-\varepsilon} - \frac{v(y_{\alpha,\varepsilon})}{1+\varepsilon} -  \frac{\alpha}{2} d^2(x_{\alpha,\varepsilon},y_{\alpha,\varepsilon}) - \frac{\varepsilon}{1-\varepsilon}\Upsilon(x_{\alpha,\varepsilon}) -\frac{\varepsilon}{1+\varepsilon}\Upsilon(y_{\alpha,\varepsilon}) \\
	= \sup_{x,y \in M} \left\{\frac{u(x)}{1-\varepsilon} - \frac{v(y)}{1+\varepsilon} - \frac{\alpha}{2} d^2(x,y)  - \frac{\varepsilon}{1-\varepsilon}\Upsilon(x) - \frac{\varepsilon}{1+\varepsilon}\Upsilon(y)\right\}.
	\end{multline}

	Suppose that
	\begin{multline}\label{condH:negative:liminf}
	\liminf_{\varepsilon \rightarrow 0} \liminf_{\alpha \rightarrow \infty} \cH\left(x_{\alpha,\varepsilon},\frac{\alpha}{2} \dd d^2(\cdot,y_{\alpha,\varepsilon})(x_{\alpha,\varepsilon})\right) \\
	- \cH\left(y_{\alpha,\varepsilon}, - \frac{\alpha}{2} \dd d^2 (x_{\alpha,\varepsilon},\cdot)(y_{\alpha,\varepsilon})\right) \leq 0,
	\end{multline}
	then $u \leq v$. In other words: $f - \lambda H f = h$ satisfies the comparison principle. 
\end{proposition}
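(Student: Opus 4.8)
The plan is to run the Crandall--Ishii doubling-of-variables argument in the manifold/non-compact form used in \cite{FK06,CK17}: one reduces the pointwise inequality $u \le v$ to compact sets via the containment function $\Upsilon$, replaces the non-smooth $d^2$ by the smooth surrogate of Lemma~\ref{lemma:injectivity_radius}, and uses the penalization $\tfrac{\alpha}{2}d^2$ in order to be able to test the merely semicontinuous functions $u,v$. The one structural ingredient to keep in mind is that one rescales $u$ and $v$ by $(1-\varepsilon)^{-1}$ and $(1+\varepsilon)^{-1}$ rather than working directly with $u-v$; this is precisely what will allow the correction $\varepsilon\,\dd\Upsilon$ to be absorbed into \emph{convex} combinations of momenta on both sides, so that convexity of $\cH$ in the cotangent variable (Assumption~\ref{assumption:Hamiltonian_on_cotangentbundle}) can be brought in.

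First I would fix a subsolution $u$ and supersolution $v$ and, for $\alpha,\varepsilon>0$, introduce
\[
M_{\alpha,\varepsilon}:=\sup_{x,y\in M}\Big\{\tfrac{u(x)}{1-\varepsilon}-\tfrac{v(y)}{1+\varepsilon}-\tfrac{\alpha}{2}d^2(x,y)-\tfrac{\varepsilon}{1-\varepsilon}\Upsilon(x)-\tfrac{\varepsilon}{1+\varepsilon}\Upsilon(y)\Big\}.
\]
Taking $x=y=z$ and letting $\varepsilon\downarrow 0$ shows $u(z)-v(z)\le\liminf_{\varepsilon\to0}\liminf_{\alpha\to\infty}M_{\alpha,\varepsilon}$ for every $z$, so it is enough to prove the right-hand side is $\le 0$. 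Next I would apply Lemma~\ref{lemma:doubling_lemma} with $\Psi=d^2$ to produce maximizers $x_{\alpha,\varepsilon},y_{\alpha,\varepsilon}$ that, for fixed $\varepsilon$, lie in a fixed compact $K$ and satisfy $\alpha\,d^2(x_{\alpha,\varepsilon},y_{\alpha,\varepsilon})\to0$. By Lemma~\ref{lemma:injectivity_radius} there is a smooth $\Psi_K$ coinciding with $\tfrac12 d^2$ and its differentials on $\{d<\delta_K\}\cap K^2$, so for large $\alpha$ the functions $x\mapsto\alpha\Psi_K(x,y_{\alpha,\varepsilon})$ and $y\mapsto\alpha\Psi_K(x_{\alpha,\varepsilon},y)$ (modified outside a compact set so as to lie in $\cD(H)\supseteq C^2_c(M)$) are legitimate test functions whose differentials at the maximizers equal $\tfrac{\alpha}{2}\dd d^2(\cdot,y_{\alpha,\varepsilon})(x_{\alpha,\varepsilon})=:q_x$ and $\tfrac{\alpha}{2}\dd d^2(x_{\alpha,\varepsilon},\cdot)(y_{\alpha,\varepsilon})=:q_y$.

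The heart of the argument is then as follows. Because $x\mapsto u(x)-(1-\varepsilon)\alpha\Psi_K(x,y_{\alpha,\varepsilon})-\varepsilon\Upsilon(x)$ has a maximum at $x_{\alpha,\varepsilon}$, Remark~\ref{remark:optimizers_in_visc_sol} gives $u(x_{\alpha,\varepsilon})\le h(x_{\alpha,\varepsilon})+\lambda\,\cH(x_{\alpha,\varepsilon},(1-\varepsilon)q_x+\varepsilon\,\dd\Upsilon(x_{\alpha,\varepsilon}))$, and dually $v(y_{\alpha,\varepsilon})\ge h(y_{\alpha,\varepsilon})+\lambda\,\cH(y_{\alpha,\varepsilon},-(1+\varepsilon)q_y-\varepsilon\,\dd\Upsilon(y_{\alpha,\varepsilon}))$. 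Now $(1-\varepsilon)q_x+\varepsilon\,\dd\Upsilon(x_{\alpha,\varepsilon})$ is a convex combination of $q_x$ and $\dd\Upsilon(x_{\alpha,\varepsilon})$, while $-q_y$ is a convex combination of $-(1+\varepsilon)q_y-\varepsilon\,\dd\Upsilon(y_{\alpha,\varepsilon})$ and $\dd\Upsilon(y_{\alpha,\varepsilon})$ with weights $\tfrac{1}{1+\varepsilon},\tfrac{\varepsilon}{1+\varepsilon}$; so convexity of $\cH(z,\cdot)$ together with $C_\Upsilon:=\sup_z\cH(z,\dd\Upsilon(z))<\infty$ (property~($\Upsilon$d)), after dividing the two inequalities by $1-\varepsilon$ and $1+\varepsilon$ and using $M_{\alpha,\varepsilon}\le\tfrac{u(x_{\alpha,\varepsilon})}{1-\varepsilon}-\tfrac{v(y_{\alpha,\varepsilon})}{1+\varepsilon}$, yields
\[
M_{\alpha,\varepsilon}\le\tfrac{h(x_{\alpha,\varepsilon})}{1-\varepsilon}-\tfrac{h(y_{\alpha,\varepsilon})}{1+\varepsilon}+\lambda\big[\cH(x_{\alpha,\varepsilon},q_x)-\cH(y_{\alpha,\varepsilon},-q_y)\big]+\lambda\Big(\tfrac{\varepsilon}{1-\varepsilon}+\tfrac{\varepsilon}{1+\varepsilon}\Big)C_\Upsilon.
\]
Since $h$ is uniformly continuous on $K$ and $d(x_{\alpha,\varepsilon},y_{\alpha,\varepsilon})\to0$, the $h$-difference is $\|h\|_\infty\,O(\varepsilon)+o(1)$ as $\alpha\to\infty$, and the last term is $O(\varepsilon)$; taking $\liminf_{\alpha\to\infty}$ and then $\liminf_{\varepsilon\to0}$ and invoking the hypothesis \eqref{condH:negative:liminf} (which makes the $\cH$-difference contribute $\le0$ in the limit) gives $\liminf_{\varepsilon\to0}\liminf_{\alpha\to\infty}M_{\alpha,\varepsilon}\le0$, hence $u\le v$.

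\textbf{Main obstacle.} I expect the delicate part to be not any individual estimate but keeping the three regularizations mutually compatible: $u,v$ are only semicontinuous, $d^2$ is not smooth, and $M$ is non-compact, so the penalty $\tfrac{\alpha}{2}d^2$, the local surrogate $\Psi_K$, and the containment function $\Upsilon$ must be juggled simultaneously. In particular the bookkeeping behind the $(1\mp\varepsilon)^{-1}$ rescaling — making ``momentum $+\,\varepsilon\,\dd\Upsilon$'' a genuine convex combination on \emph{both} the sub- and the supersolution side, with the right signs and weights — together with the verification, via Lemma~\ref{lemma:doubling_lemma}(b),(c), that the maximizers collapse onto the diagonal fast enough that $\Psi_K$ may legitimately stand in for $d^2$, is where the real care lies.
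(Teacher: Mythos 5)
Your proof is correct and follows essentially the same route the paper points to: the paper does not spell out a proof but explicitly refers to Lemma 9.3 of \cite{FK06} and Proposition A.11 of \cite{CK17}, which is exactly the doubling-of-variables argument with the $(1\mp\varepsilon)^{-1}$ rescaling, the containment function to produce optimizers on compact sets, the smooth surrogate $\Psi_K$ for $d^2$ from Lemma~\ref{lemma:injectivity_radius}, and convexity of $\cH(z,\cdot)$ plus property $(\Upsilon\mathrm{d})$ to absorb the $\varepsilon\,\dd\Upsilon$ terms. The bookkeeping in your convex-combination step and the passage through Lemma~\ref{lemma:doubling_lemma}(b),(c) to control the $h$- and $\cH$-differences match the intended argument.
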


\subsection{Control theory} \label{section:control_theory}

Next, we introduce some basic definitions from control theory, which can be used to write down a viscosity solution for the Hamilton-Jacobi equation. Given the comparison principle, this identifies the resolvent using an explicit formula  featuring the \textit{Lagrangian} $\cL : TM \rightarrow [0,\infty]$, defined as $\cL(x,v) = \sup_{p \in T_x^* M} \{\ip{p}{v} - \cH(x,p)$\}. This Lagrangian keeps track of the cost along a trajectory that will play a central role in the form of the rate function of the large deviation principle. We define a variational semigroup $\bfV(t), t \geq 0$ and resolvent $\bfR(\lambda), \lambda > 0$ in terms of $\cL$:

\begin{equation*}
\bfV(t)f(x) := \sup_{\substack{\gamma \in \cA\cC, \\ \gamma(0) = x}} \left\{f(\gamma(t)) - \int_0^t \cL(\gamma(s),\dot{\gamma}(s)) \dd s\right\},
\end{equation*}
and
\begin{multline*}
\bfR(\lambda) f(x) := \\
\limsup_{t \rightarrow \infty} \sup_{\substack{\gamma \in \cA\cC \\ \gamma(0) = x}} \left\{ \int_0^t \lambda^{-1} e^{-\lambda^{-1}t} \left(f(\gamma(t)) - \int_0^s \cL(\gamma(r),\dot{\gamma}(r) \dd r \right) \dd s \right\}.
\end{multline*}

	The following two results establish the conditions that are needed for the application of the control theory component of \cite[Theorem 8.27]{FK06}.

	The first result, Proposition \ref{proposition:FK8.9}, can be used to establish the path-space compactness of the set of trajectories that start in a compact set and have uniformly bounded Lagrangian cost. The compactness of this set can be used to establish various properties of $\bfV$ and $\bfR$. 
	
	If one additionally assumes that there exists a trajectory with zero cost, which will follow from the much stronger second result, one can infer that, 
	\begin{itemize}
		\item the resolvents approximates the  semigroups as in the Crandall-Ligget theorem, see cf. \cite[Lemma 8.18]{FK06},
		\begin{equation*}
		\lim_{n \rightarrow \infty} \bfR(n^{-1})^{\lfloor nt \rfloor} f(x) = \bfV(t)f(x);
		\end{equation*}
		\item the resolvent $\bfR(\lambda) h$ is a viscosity sub-solution to $f - \lambda H f = h$.
	\end{itemize}
		
	The second result, Proposition \ref{proposition:FK8.11}, is crucial in establishing that the lower semi-continuous regularization of $\bfR(\lambda)h$ is a viscosity supersolution to the Hamilton-Jacobi equation $f - \lambda H f = h$. 

	\smallskip
	
	Thus, if the comparison principle holds for $f - \lambda H f = h$, $\lambda > 0$ and $h \in C_b(M)$, the variational resolvent gives the unique viscosity solution to the Hamilton-Jacobi equation. This, in turn, means that the variational semigroup is the semigroup generated by the Hamiltonian $H$.


\begin{proposition} \label{proposition:FK8.9}
	Suppose that $\cH : T^*M \rightarrow \bR$ is once continuously differentiable and define $\cL$ as its Legendre transform.
	
	Suppose that there is a good containment function $\Upsilon$ for $\cH$. Then 
	
		\begin{enumerate}[(1)]
		\item $\cL : TM \rightarrow [0,\infty]$ is lower semi-continuous and for each compact set $K \subseteq E$ and $c \in \bR$ the set
		\begin{equation*}
		\left\{(x,v) \in TM \, \middle| \, x \in K, \, \cL(x,v) \leq c \right\}
		\end{equation*}
		is compact in $TM$.
		\item For each compact $K \subseteq M$, $T > 0$ and $0 \leq C < \infty$, there exists a compact set $K' = K'(K,T,C) \subseteq M$ such that $\gamma \in \cA\cC$ and $\gamma(0) \in K$ and
		\begin{equation*}
		\int_0^T \cL(\gamma(s),\dot{\gamma}(s)) \, \dd s \leq C
		\end{equation*}
		implies $\gamma(t) \in K'$ for all $0 \leq t \leq T$.
		\item For each $f \in C^\infty_c(M)$ and compact $K \subseteq M$, there exists a right-continuous non-decreasing function $\psi_{f,K} : \bR^+ \rightarrow \bR^+$ such that $\lim_{r \rightarrow \infty} r^{-1} \psi_{f,K}(r)  = 0$ and
		\begin{equation*}
		|\ip{v}{\dd f(x)}| \leq \psi_{f,K}(\cL(x,v)), \qquad \forall (x,v) \in TM, x \in K. 
		\end{equation*} 
	\end{enumerate}
\end{proposition}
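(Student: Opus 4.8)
The plan is to derive all three parts from the Fenchel--Young inequality $\ip{p}{v} \le \cL(x,v) + \cH(x,p)$ --- valid for all $x \in M$, $v \in T_xM$, $p \in T_x^*M$, and immediate from $\cL(x,v) = \sup_p\{\ip{p}{v} - \cH(x,p)\}$ --- together with the fact that the continuous function $\cH$ is bounded on compact subsets of $T^*M$; non-negativity of $\cL$ is inherited from $\cH(x,0) = 0$, which holds in all the settings in which we invoke the proposition. For (1): lower semicontinuity of $\cL$ is a local property and, read in a coordinate chart, $\cL$ is a pointwise supremum of the continuous functions $(x,v) \mapsto \ip{p}{v} - \cH(x,p)$, hence l.s.c. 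For the compactness claim, the set $S := \{(x,v) \in TM : x \in K,\ \cL(x,v) \le c\}$ is closed, being the intersection of the sublevel set $\{\cL \le c\}$ with the closed set $\{(x,v) : x \in K\}$. Fixing $R > 0$ and setting $C_R := \sup\{\cH(x,p) : x \in K,\ |p|_{g(x)} \le R\} < \infty$ (finite because $\cH$ is continuous on the compact $R$-disk bundle over $K$), insertion into Fenchel--Young of the covector dual to $v$ rescaled to Riemannian norm $R$ yields $\cL(x,v) \ge R|v|_{g(x)} - C_R$, so on $S$ we get $|v|_{g(x)} \le (c+C_R)/R$. Thus $S$ is a closed subset of the compact $\big((c+C_R)/R\big)$-disk bundle over $K$, hence compact.

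For (2), let $\Upsilon$ be the given good containment function and $N := \sup_z \cH(z,\dd\Upsilon(z)) < \infty$ by ($\Upsilon$d). Given $\gamma \in \cA\cC$ with $\gamma(0) \in K$ and $\int_0^T \cL(\gamma(s),\dot\gamma(s))\,\dd s \le C$, the function $t \mapsto \Upsilon(\gamma(t))$ is absolutely continuous, so for all $t \in [0,T]$
\[
\Upsilon(\gamma(t)) - \Upsilon(\gamma(0)) = \int_0^t \ip{\dd\Upsilon(\gamma(s))}{\dot\gamma(s)}\,\dd s \le \int_0^t \big(\cL(\gamma(s),\dot\gamma(s)) + N\big)\,\dd s \le C + NT,
\]
by Fenchel--Young with $p = \dd\Upsilon(\gamma(s))$ and using $\cL \ge 0$. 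Hence $\gamma(t) \in K' := \{x \in M : \Upsilon(x) \le \sup_K \Upsilon + C + NT\}$ for every $t \in [0,T]$, and $K'$ is compact by ($\Upsilon$c) (with $\sup_K \Upsilon < \infty$ since $K$ is compact).

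For (3), fix $f \in C^\infty_c(M)$ and $K$ compact, and for $\lambda > 0$ put $g(\lambda) := \sup_{x \in K}\max\{\cH(x,\lambda\,\dd f(x)),\ \cH(x,-\lambda\,\dd f(x))\}$, which is finite since $\{(x,\pm\lambda\,\dd f(x)) : x \in K\}$ is a compact subset of $T^*M$; we may take $g(\lambda) \ge 0$, replacing it by its positive part if needed. Applying Fenchel--Young to $\pm\lambda\,\dd f(x)$ and dividing by $\lambda$ gives $|\ip{v}{\dd f(x)}| \le \lambda^{-1}\big(\cL(x,v) + g(\lambda)\big)$ for every $x \in K$, $v \in T_xM$, $\lambda > 0$. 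Define $\psi_{f,K}(r) := \inf_{\lambda > 0}\lambda^{-1}\big(r + g(\lambda)\big)$ for $r \ge 0$. This is non-negative, non-decreasing, finite (bounded by $r + g(1)$), and concave, hence continuous on $(0,\infty)$; it is right-continuous at $0$ as well --- for a near-minimizing $\lambda$ in $\psi_{f,K}(0)$ the extra term $r/\lambda \to 0$ as $r \to 0$ --- or one may simply pass to its right-continuous envelope. By construction $|\ip{v}{\dd f(x)}| \le \psi_{f,K}(\cL(x,v))$, and since $\limsup_{r \to \infty} r^{-1}\psi_{f,K}(r) \le \lambda^{-1}$ for every $\lambda > 0$, letting $\lambda \to \infty$ yields $\lim_{r\to\infty} r^{-1}\psi_{f,K}(r) = 0$.

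The step I expect to demand the most care is the absolute-continuity bookkeeping in (2): one must check that a curve $\gamma \in \cA\cC(I;M)$, although defined through $C^\infty$ test functions, is genuinely absolutely continuous when read in coordinate charts, so that the chain rule $\tfrac{\dd}{\dd t}\Upsilon(\gamma(t)) = \ip{\dd\Upsilon(\gamma(t))}{\dot\gamma(t)}$ and the fundamental theorem of calculus may legitimately be applied to the merely $C^2$ --- and not compactly supported --- function $\Upsilon$. This is the only place where the containment function enters essentially; everything else is Legendre duality together with the local boundedness of the continuous Hamiltonian $\cH$.
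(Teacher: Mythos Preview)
Your argument is correct. The paper does not actually give its own proof of this proposition: it simply states ``The result can be proven as in Lemma 2 of \cite{Kra16}.'' Your proposal supplies exactly the standard argument one would expect to find there --- Fenchel--Young for the pointwise bounds, continuity of $\cH$ on compact pieces of $T^*M$ for the finiteness of the relevant suprema, and the containment function $\Upsilon$ together with ($\Upsilon$c)--($\Upsilon$d) for the a priori confinement in (2). The construction $\psi_{f,K}(r) = \inf_{\lambda>0}\lambda^{-1}(r+g(\lambda))$ in (3) is the usual way to manufacture a sublinear envelope from the family of linear bounds, and your verification of its properties is clean. The one technical point you flag --- that $\Upsilon$ is only $C^2$ while the paper's definition of $\cA\cC$ is phrased with $C^\infty$ test functions --- is genuine but easily handled: testing against smooth coordinate functions shows $\gamma$ is absolutely continuous in charts, after which the chain rule for $C^1$ functions applies; alternatively one can run a stopping-time argument to stay in a compact set and mollify $\Upsilon$ there.
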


The result can be proven as in Lemma 2 of \cite{Kra16}.

\begin{proposition} \label{proposition:FK8.11}
	Suppose that $\cH : T^*M \rightarrow \bR$ is once continuously differentiable and define $\cL$ as its Legendre transform.
	
	Suppose that there is a good containment function $\Upsilon$ for $\cH$. Then for each $x_0 \in M$ and $f \in \cD(H)$, there exists $\gamma \in \cA \cC$ with $\gamma(0) = x_0$ and for all $t_1 < t_2$:
	\begin{equation*}
	\int_{t_1}^{t_2} H f(\gamma(s)) \dd s \leq \int_{t_1}^{t_2} \ip{\dot{\gamma}(s)}{\dd f(\gamma(s))} - \cL(\gamma(s),\dot{\gamma}(s))) \,  \dd s.
	\end{equation*}
\end{proposition}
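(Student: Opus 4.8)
\emph{Proof proposal.} The plan is to realize $\gamma$ as an integral curve of the characteristic vector field attached to $\cH$ and $f$, and then to observe that along such a curve the asserted inequality holds, in fact, with equality.

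The starting point is Fenchel--Young duality. For each $x$ the function $\cH(x,\cdot)$ is convex, continuous and finite on all of $T_x^*M$, so $\cL(x,\cdot)$ is its Legendre conjugate and $\cH(x,\cdot)$ is in turn the conjugate of $\cL(x,\cdot)$; consequently $\ip{p}{v} \le \cH(x,p) + \cL(x,v)$ for all $p \in T_x^*M$ and $v \in T_xM$, with equality exactly when $v = D_p\cH(x,p)$, the fiber derivative of $\cH$ at $(x,p)$ (a single point, since $\cH$ is $C^1$ in $p$). Taking $p = \dd f(x)$, this shows that for \emph{every} $\gamma \in \cA\cC$ and a.e.\ $s$,
$$
Hf(\gamma(s)) = \cH(\gamma(s),\dd f(\gamma(s))) \;\ge\; \ip{\dot\gamma(s)}{\dd f(\gamma(s))} - \cL(\gamma(s),\dot\gamma(s)),
$$
with equality precisely when $\dot\gamma(s) = D_p\cH(\gamma(s),\dd f(\gamma(s)))$. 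Hence it is enough to produce $\gamma \in \cA\cC(\RR^+;M)$ with $\gamma(0) = x_0$ solving the ODE $\dot\gamma(s) = Y(\gamma(s))$, where $Y(x) := D_p\cH(x,\dd f(x)) \in T_xM$: for such a $\gamma$ the two integrands coincide a.e., and integrating over $[t_1,t_2]$ yields the claim (with equality, which is stronger than what is required).

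The field $Y$ is a continuous vector field on $M$, because $\cH$ is once continuously differentiable and $f \in \cD(H)$ is continuously differentiable (so that $Hf(x) = \cH(x,\dd f(x))$ is well defined). By Peano's theorem there is a local solution through $x_0$, which I would extend to a maximal solution $\gamma:[0,T_{\max}) \to M$; I do not claim uniqueness, since $Y$ need not be Lipschitz, but only existence is needed. The crux is to show $T_{\max} = \infty$, and this is where the good containment function enters, via Proposition \ref{proposition:FK8.9}(2). Assume $T_{\max} < \infty$. Along $\gamma$ the Fenchel--Young equality holds, so for $t < T_{\max}$, using the definition of $\cA\cC$,
$$
\int_0^t \cL(\gamma(s),\dot\gamma(s))\,\dd s = \big(f(\gamma(t)) - f(x_0)\big) - \int_0^t Hf(\gamma(s))\,\dd s \;\le\; 2\|f\|_\infty + T_{\max}\|Hf\|_\infty =: C,
$$
since $f, Hf \in C_b(M)$. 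Proposition \ref{proposition:FK8.9}(2), with $K = \{x_0\}$, time horizon $T_{\max}$ and constant $C$, then provides a compact set $K'$ containing $\gamma([0,T_{\max}))$; as $Y$ is bounded on $K'$, $\gamma$ is Lipschitz on $[0,T_{\max})$, hence extends continuously to $T_{\max}$ with $\gamma(T_{\max}) \in K'$, and restarting the ODE there contradicts maximality. Thus $T_{\max} = \infty$, and since $\dot\gamma = Y\circ\gamma$ is locally bounded, $\gamma$ is locally Lipschitz, so $\gamma \in \cA\cC(\RR^+;M)$.

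The main obstacle is precisely the global existence of the integral curve: $Y$ is only continuous, so Picard iteration is unavailable, and without further input the solution could escape every compact set in finite time; the containment function — through Proposition \ref{proposition:FK8.9}(2) combined with the boundedness of $f$ and $Hf$ in $C_b(M)$ — is exactly the device that rules this out. A secondary technical point is the measurability and local integrability of $s \mapsto \cL(\gamma(s),\dot\gamma(s))$, which I would handle using lower semicontinuity of $\cL$ (Proposition \ref{proposition:FK8.9}(1)) together with the a.e.\ identity $\cL(\gamma(s),\dot\gamma(s)) = \ip{\dot\gamma(s)}{\dd f(\gamma(s))} - Hf(\gamma(s))$, whose right-hand side is bounded on bounded time intervals once $\gamma$ is known to stay in a compact set.
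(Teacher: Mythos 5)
Your proposal is correct and follows essentially the same route as the paper: define the (continuous, non-Lipschitz) vector field $Q_f(x) = \nabla_p\cH(x,\dd f(x))$, note that Fenchel--Young duality gives pointwise equality (not just the inequality) along any of its integral curves, bound $\int \cL(\gamma,\dot\gamma)$ a priori using that $f$ and $Hf$ are bounded, and invoke Proposition \ref{proposition:FK8.9}(2) to trap the trajectory in a compact set, which yields global existence. The only stylistic difference is that you run the global-existence step as a contradiction against finite $T_{\max}$, while the paper phrases it as patching local solutions with a uniformly bounded step size; the content is identical.
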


\begin{proof}
	The proof can be carried out as in the proof of Lemma 3 of \cite{Kra16}. We prefer to spell this out as it is slightly more involved in our manifold setting. Consider $f \in C_c^\infty(M)$ and $x_0 \in M$. We construct $\gamma \in \cA \cC$ with $\gamma(0) = x_0$ and such that for all $t_1 < t_2$
	\begin{equation}\label{equation:optimizing_flow_in_Young_inequality}
	\int_{t_1}^{t_2} Hf(\gamma(s)) \dd s = \int_{t_1}^{t_2} \ip{\dot{\gamma}(s)}{\dd f(\gamma(s))} - \cL(\gamma(s),\dot{\gamma}(s)) \dd s.
	\end{equation}

	To construct such a $\gamma$ we follow the approach in the proof of lemma 10.21 in \cite{FK06} or lemma 3.4 in \cite{Kra16}. For every $x \in M$, denote by $\Nabla_p\cH(x,p) \in T_p(T^*_xM)$ the derivative of $\cH(x,\cdot)$ with respect to the second variable. As $T_x^*M$ is a vector space, we have $T_p(T^*_xM) \eqsim T^*_xM \eqsim T_xM$, where $\eqsim$ means the spaces are isomorphic as vector spaces. Here, the last identification holds via the Riemannian metric.  Consequently, given $f \in C^1(M)$ we can define the continuous vector field $Q_f$ by
	\begin{equation} \label{eqn:optimizing_vector_field}
	Q_f(x) = \Nabla_p\cH(x,\dd f(x)).
	\end{equation}
	Here, continuity follows from our assumption on $\cH$. From the theory of convex analysis (see e.g. \cite[Section 26]{Roc70}) we find that
	\begin{equation}\label{equation:optimalduality}
	Hf(x) = \cH(x,\dd f(x)) = \inp{Q_f(x)}{\dd f(x)} - \cL(x,Q_f(x)).
	\end{equation}
	
	Thus, if a solution $\gamma$ to \eqref{eqn:optimizing_vector_field} exists, then integration of \eqref{equation:optimalduality} along this $\gamma$ yields \eqref{equation:optimizing_flow_in_Young_inequality}.
	
	We will construct the solution by pasting together local solutions, which exist due to the possibility of arguing via coordinate charts. The size of the interval on which we can guarantee existence of the local solution depends on the size of the vector field. Therefore, if we can give a-priori control on the range of possible solutions, i.e., find a compact set in which a solution is contained, we can bound the size of the vector field. From this we  obtain a uniform lower bound on the length of the interval on which a local solution exists. This allows us to establish existence for the full interval $[t_1,t_2]$.
	
	So suppose $\gamma_f$ is a solution to \eqref{eqn:optimizing_vector_field}. \eqref{equation:optimalduality} implies
	\begin{equation*}
	\int_{t_1}^{t_2} \cL(\gamma_f(s),\dot{\gamma}_f(s)) \dd s \leq 	\int_{t_1}^{t_2} \ip{\dot{\gamma}_f(s)}{\dd f(\gamma_f(s))}  - Hf(\gamma_f(s)) \,\dd s.
	\end{equation*}
	Note that as $f$ is constant outside of a compact set, the map $x \mapsto \cH(x,\dd f(x))$ is bounded from below as $\cH$ is continuous, and $Q_f(x)$ is bounded on compact sets, there is some $C \geq 0$ such that
	\begin{equation*}
	\int_{t_1}^{t_2} \cL(\gamma_f(s),\dot{\gamma}_f(s)) \dd s \leq 	C.
	\end{equation*}
	Using Proposition \ref{proposition:FK8.9}(2), the trajectory $\gamma_f$ remains in some compact set $K' \subseteq M$ for $t \in [t_1,t_2]$. \\
	
	This implies we have a-priori control on the range of a solution to \eqref{eqn:optimizing_vector_field}. On this set, we can bound the size of the vector field and thus give a lower bound for the size of the interval on which we construct a local solution. These local solutions can be patched together to construct a global solution on $[t_1,t_2]$.

\end{proof}

\subsection{Compact containment and the large deviation principle} \label{section:abstract_LPD}

To connect the Hamilton-Jacobi equation to the large deviation principle, we introduce some additional concepts. We consider the following notion of operator convergence. 

\begin{definition} \label{definition:operator_convergence}
	Suppose that for each $n$ we have an operator $(B_n,\cD(B_n))$, $B_n : \cD(B_n) \subseteq C_b(M) \rightarrow C_b(M)$. The \textit{extended limit} $ex-\LIM_n B_n$ is defined by the collection $(f,g) \in C_b(M) \times C_b(M)$ such that there exist $f_n \in \cD(B_n)$ satisfying
	\begin{equation} \label{eqn:convergence_condition}
	\lim_{n \rightarrow \infty} \sup_{x \in K} \left|f_n(x) - f(x)\right| + \left|B_n f_n(x) - g(x)\right| = 0.
	\end{equation}
	For an operator $(B,\cD(B))$, we write $B \subseteq ex-\LIM_n B_n$ if the graph $\{(f,Bf) \, | \, f \in \cD(B) \}$ of $B$ is a subset of $ex-\LIM_n B_n$.
\end{definition}

\begin{assumption} \label{assumption:LDP_assumption}
	 Let $\{r_n\}_{n \geq 1}$, $r_n > 0$, be some sequence of speeds with $\lim_{n \rightarrow \infty} r_n = \infty$.
	\begin{description}
		\item[Continuous time case] Assume that for each $n \geq 1$, we have a linear operator $A_n \subseteq C_b(M) \times C_b(M)$ and existence and uniqueness holds for the $D(\bR^+,M)$ martingale problem for $(A_n,\mu)$ for each initial distribution $\mu \in \cP(M)$. Letting $\bR_{y}^n \in \cP(D(\bR^+,M))$ be the solution to $(A_n,\delta_y)$, the mapping $y \mapsto \bR_y^n$ is measurable for the weak topology on $\cP(D(\bR^+,M))$. Let $X_n$ be the solution to the martingale problem for $A_n$ and set
		\begin{equation*}
		H_n f = \frac{1}{r_n} e^{-r_nf}A_n e^{r_nf} \qquad e^{r_nf} \in \cD(A_n). 
		\end{equation*}
		\item[Discrete time case]  Assume for each $n \geq 1$, we have a transition operator $T_n : C_b(M) \rightarrow C_b(M)$ for a Markov chain. In addition, let $\varepsilon_n >0$ be a sequence of step-sizes with $\varepsilon_n \rightarrow 0$. For each $n$, let $X_n$ be a discrete-time Markov chain with transition operator $T_n$ and time-step $\varepsilon_n$:
		\begin{equation*}
		\bE\left[f(X_n(t)) \, \middle| \, X_n(0) = x \right] = T_n^{\lfloor t \varepsilon_n^{-1} \rfloor} f(x).
		\end{equation*}
		Set
		\begin{equation*}
		H_n f = \frac{1}{r_n \varepsilon_n} \log e^{-r_n f} T_n e^{r_n f}.
		\end{equation*}
	\end{description}
	Suppose that we have an operator $H : \cD(H) \subseteq C_b(M) \rightarrow C_b(M)$ with $\cD(H) = C^\infty_c(M)$ satisfying Assumption \ref{assumption:Hamiltonian_on_cotangentbundle} which satisfies $H \subseteq ex-\LIM H_n$. Finally, assume that the map $\cH : T^*M \rightarrow \bR$ is continuously differentiable.
\end{assumption}

The following result follows along the lines of Proposition A.15 in \cite{CK17}, whose proof is based on Lemma 4.22 in \cite{FK06}. In both references, the result was only proven for the continuous time case, but it can be generalized without problem to the discrete time case.

\begin{proposition} \label{proposition:exp_compact_containment}
	Suppose Assumption \ref{assumption:LDP_assumption} is satisfied and assume that $\Upsilon$ is a good containment function for $H$.
	
	Then the sequence $\{X_n\}_{n \geq 1}$ satisfies the exponential compact containment condition with speed $\{r_n\}_{n \geq 1}$: for every $T > 0$ and $a \geq 0$, there exists a compact set $K_{a,T} \subseteq M$ such that
	\begin{equation*}
	\limsup_{n \rightarrow \infty} \frac{1}{r_n} \log \bP\left[X_n(t) \notin K_{a,T} \text{ for some } t \leq T \right] \leq -a.
	\end{equation*}
\end{proposition}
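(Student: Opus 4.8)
The plan is to prove the exponential compact containment condition by using the good containment function $\Upsilon$ to build a supermartingale-type object and then applying an exponential Markov inequality, exactly as in the classical argument behind Lemma 4.22 in \cite{FK06} / Proposition A.15 in \cite{CK17}. Here I write the argument in the discrete-time case (the continuous-time case is the standard one from the references); the two are parallel. First I would fix $T>0$ and $a\geq 0$. Because $\Upsilon$ is a good containment function for $H$, we have $\|\cH(\cdot,\dd\Upsilon(\cdot))\|_\infty =: C_\Upsilon < \infty$, and $H\subseteq ex\text{-}\LIM_n H_n$. Strictly speaking $\Upsilon\notin C^\infty_c(M)=\cD(H)$, so one first truncates: multiply $\Upsilon$ by a cutoff that is $\equiv 1$ on a large sublevel set $\{\Upsilon\leq c'\}$ to get $\Upsilon_{c'}\in C^\infty_c(M)$ with $H\Upsilon_{c'}=\cH(\cdot,\dd\Upsilon_{c'})$ bounded uniformly in $c'$ on that sublevel set; we will only track the process until it leaves a suitable compact set so the truncation is harmless. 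To keep the exposition clean one records at the outset that it suffices to work with a function $\widetilde\Upsilon\in C^\infty_c(M)$ that agrees with $\Upsilon$ on $\{\Upsilon \leq 2a T + \Upsilon(x_0)+1\}$ or similar, and satisfies $\sup_z\cH(z,\dd\widetilde\Upsilon(z))\leq C_\Upsilon+1$.

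Next I would transfer the bound on $H\widetilde\Upsilon$ to a bound on $H_n f_n$. Since $(\widetilde\Upsilon, H\widetilde\Upsilon)\in ex\text{-}\LIM_n H_n$, there are $f_n\in\cD(H_n)$ with $f_n\to\widetilde\Upsilon$ and $H_n f_n\to H\widetilde\Upsilon$ uniformly on compacts; in particular, on the relevant compact set, $H_n f_n \leq C_\Upsilon + 2$ for $n$ large. Unwinding the definition $H_n f = \frac{1}{r_n\varepsilon_n}\log e^{-r_n f}T_n e^{r_n f}$, this says
\begin{equation*}
T_n e^{r_n f_n}(x) \leq e^{r_n f_n(x)} e^{r_n\varepsilon_n(C_\Upsilon+2)}
\end{equation*}
for all $x$ in that compact set. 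Hence the process $M_j := e^{r_n f_n(X_n(j\varepsilon_n))} e^{-r_n\varepsilon_n(C_\Upsilon+2)\, j}$, stopped at the exit time $\sigma$ of the compact set, is a supermartingale with respect to the natural filtration, with $\E[M_0]$ controlled by the initial distribution (here one needs an assumption or a reduction ensuring the initial points are controlled — in the applications $X_n(0)$ satisfies an LDP with good rate function, so after intersecting with a compact sublevel set of $I_0$ this is fine; alternatively one proves the statement for deterministic initial conditions in a fixed compact set, which is what the later proofs use). Then, for the compact set $K_{a,T} := \{x : \widetilde\Upsilon(x)\leq R\}$ with $R$ chosen large,
\begin{equation*}
\bP\big(X_n(j\varepsilon_n)\notin K_{a,T} \text{ for some } j\varepsilon_n\leq T\big)
\leq \bP\Big(\sup_{j\varepsilon_n \leq T} M_j \geq e^{r_n R} e^{-r_n T(C_\Upsilon+2)}\Big)
\leq e^{-r_n R}\,e^{r_n T(C_\Upsilon+2)}\,\E[M_0]
\end{equation*}
by Doob's maximal inequality for the nonnegative supermartingale $M_j$; the cádlág-path refinement needed to pass from the skeleton $\{j\varepsilon_n\}$ to ``some $t\leq T$'' is standard and is handled as in \cite{FK06}. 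Taking $\frac1{r_n}\log$ and $\limsup_{n\to\infty}$ (using $r_n\to\infty$, $\varepsilon_n\to 0$ and $\frac1{r_n}\log\E[M_0]\to 0$) gives $\limsup_n \frac1{r_n}\log\bP(\cdots) \leq T(C_\Upsilon+2) - R$, and choosing $R = a + T(C_\Upsilon+2)$ yields the claimed bound $\leq -a$. This completes the argument modulo the cited references for the routine parts.

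The main obstacle, and the place where care is genuinely needed, is the truncation step and the resulting bookkeeping: $\Upsilon$ itself is not an admissible test function for $H$ (its domain is $C^\infty_c(M)$), and one must replace it by a compactly supported modification without losing the Hamiltonian bound $(\Upsilon\text{d})$, all while ensuring the modification still has compact sublevel sets large enough to contain $K_{a,T}$. Once the exit time is inserted and one only ever evaluates $H_n f_n$ where $X_n$ has not yet left a fixed (large) compact set, this is a finite, self-contained truncation, but it is the step that does not appear verbatim in the Euclidean references and hence deserves to be spelled out; everything else (the supermartingale property, Doob's inequality, the skeleton-to-path upgrade) is exactly as in \cite[Lemma 4.22]{FK06} and \cite[Proposition A.15]{CK17}, whose proofs carry over after this cosmetic change and the replacement of Euclidean estimates by the use of $\cH$ on $T^*M$.
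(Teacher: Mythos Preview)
Your proposal is correct and follows exactly the approach the paper indicates: the paper does not give a self-contained proof but simply states that the result ``follows along the lines of Proposition A.15 in \cite{CK17}, whose proof is based on Lemma 4.22 in \cite{FK06}'', noting that the discrete-time case is a straightforward adaptation. You have faithfully unpacked that argument (truncation of $\Upsilon$ into $C^\infty_c(M)$, passage to approximating $f_n$ via $H\subseteq ex\text{-}\LIM H_n$, the supermartingale built from $e^{r_n f_n}$, and Doob's inequality), and you correctly flag the one genuinely manifold-specific wrinkle, namely that $\Upsilon\notin\cD(H)$ and must be cut off before the operator convergence can be invoked.
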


\begin{theorem} \label{theorem:abstract_LDP}
		Consider the setting of Assumption \ref{assumption:LDP_assumption}. Suppose that $X_n(0)$ satisfies a large deviation principle with speed $\{r_n\}_{n \geq 1}$ and good rate function $I_0$. 
	\begin{enumerate}[(a)]
		\item Suppose that $\Upsilon$ is a good containment function for $H$. Then the processes $\{X_n\}_{n \geq 1}$ are exponentially tight with speed $r_n$ in $D(\bR^+,M)$.
		\item In addition to the assumption in (a), suppose that for each $\lambda > 0$ and $h \in C_b(M)$ the comparison principle is satisfied for  $f - \lambda H f = h$. Then the large deviation principle is satisfied with speed $r_n$ for the processes $X_n$ with good rate function $I$ given by
			\begin{equation*}
			I(\gamma) = \begin{cases}
			I_0(\gamma(0)) + \int_0^\infty \cL(\gamma(s),\dot{\gamma}(s)) \, \dd s   & \text{if } \gamma \in \cA \cC, \\
			\infty & \text{otherwise},
			\end{cases}
			\end{equation*}
			where $\cL : TM \rightarrow [0,\infty]$ is the Legendre transform of $\cH$ given by 
			\begin{equation*}
			\cL(x,v) = \sup_{p \in T^*_xM} \left\{ \ip{v}{p} - \cH(x,p)\right\}.
			\end{equation*}
	\end{enumerate}	
\end{theorem}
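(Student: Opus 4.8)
The plan is to recognize that Assumption~\ref{assumption:LDP_assumption}, together with the good containment function and—for part~(b)—the comparison principle for $f - \lambda Hf = h$, places us precisely in the scope of the Feng--Kurtz convergence machinery, so that the theorem follows from \cite[Theorem 8.27]{FK06} and its discrete-time counterpart. All the genuinely hard analytic input (comparison principle, the control-theoretic compactness of Proposition~\ref{proposition:FK8.9}, the containment function, the relaxed-limit viscosity construction) has been isolated in the preceding results; what remains is to assemble the pieces in the correct order.

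\emph{Part (a).} Proposition~\ref{proposition:exp_compact_containment} already supplies the exponential compact containment condition at speed $\{r_n\}$. To upgrade this to exponential tightness in $D(\bR^+,M)$ I would combine it with a uniform control on the exponential modulus of continuity of the trajectories. For this one uses that $C^\infty_c(M) = \cD(H) \subseteq ex\text{-}\LIM_n H_n$: for $f \in C^\infty_c(M)$ choose $f_n \in \cD(H_n)$ with $f_n \to f$ and $H_n f_n \to Hf$ uniformly on compacts, and exploit the exponential (super)martingale built from $f_n(X_n(\cdot))$—in continuous time $\exp\{r_n f_n(X_n(t)) - r_n\int_0^t H_n f_n(X_n(s))\,\dd s\}$, in discrete time its product analogue—to get exponential bounds on the oscillations of $f(X_n)$. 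Running this over a countable family of $f$'s separating the points of $M$ and invoking the standard Skorokhod-space criterion then yields exponential tightness.

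\emph{Part (b).} Fix $\lambda > 0$ and $h \in C_b(M)$, let $R_n(\lambda)h$ solve $f - \lambda H_n f = h$, and form the relaxed limits $\overline f, \underline f$ as in~\eqref{eqn:semi_relaxed_limits}. Using $H \subseteq ex\text{-}\LIM_n H_n$ one checks that $\overline f$ is a viscosity subsolution and $\underline f$ a viscosity supersolution of $f - \lambda Hf = h$; since always $\overline f \geq \underline f$, the assumed comparison principle forces $\overline f = \underline f =: R(\lambda)h$, the unique viscosity solution. Letting $\lambda, h$ vary produces an operator $\hat H$ with $H \subseteq \hat H \subseteq ex\text{-}\LIM_n H_n$ satisfying the range condition and—being a relaxed limit—the maximum principle, so by the Crandall--Liggett theorem $\hat H$ generates a semigroup $V(t) = \lim_{n\to\infty} R(n^{-1})^{\lfloor nt \rfloor}$. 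Operator convergence of $H_n$ to $\hat H$ together with the exponential tightness of part~(a) then gives, via the Bryc/projective-limit argument sketched in Section~\ref{subsection:sketchproofMogulskii}, the large deviation principle for $\{X_n\}$ in $D(\bR^+,M)$ with a rate function of the form $I(\gamma) = I_0(\gamma(0)) + I_{\mathrm{pl}}(\gamma)$, where $I_{\mathrm{pl}}$ is the projective-limit rate function expressed through $V(t)$.

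\emph{Lagrangian form, and the main obstacle.} To rewrite $I_{\mathrm{pl}}$ as $\int_0^\infty \cL(\gamma(s),\dot\gamma(s))\,\dd s$ I would invoke the control-theoretic Propositions~\ref{proposition:FK8.9} and~\ref{proposition:FK8.11}: the first gives lower semicontinuity of $\cL$ and compactness of the sets $\{(x,v) : x \in K,\, \cL(x,v) \leq c\}$ and of bounded-cost trajectory families; the second produces, for each $f \in \cD(H)$ and each starting point, an absolutely continuous trajectory along which the Fenchel equality $Hf = \ip{\dot\gamma}{\dd f(\gamma)} - \cL(\gamma,\dot\gamma)$ holds pointwise. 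These are exactly the hypotheses under which \cite[Theorem 8.27]{FK06} shows that the variational resolvent $\bfR(\lambda)h$ is a viscosity solution of $f - \lambda Hf = h$; by comparison $\bfR(\lambda)h = R(\lambda)h$, hence $V(t) = \bfV(t)$ is the variational semigroup, and substituting this into $I_{\mathrm{pl}}$—using the compactness from Proposition~\ref{proposition:FK8.9} to guarantee that finite-cost paths are absolutely continuous and that the supremum over time partitions collapses to an integral—gives the asserted rate function. I expect the work to be organisational rather than deep; the subtlest points are running the \emph{discrete-time} version of the Feng--Kurtz convergence theorem verbatim (in particular checking that $H_n f = (r_n\varepsilon_n)^{-1}\log e^{-r_n f} T_n e^{r_n f}$ is well-defined and well-behaved on $C^\infty_c(M)$) and keeping the relaxed-limit and lower-semicontinuous-regularization steps compatible with the non-compactness of $M$, which is precisely the role of the containment function $\Upsilon$ throughout.
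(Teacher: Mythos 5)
Your proposal follows essentially the same route as the paper: part (a) via Proposition~\ref{proposition:exp_compact_containment} together with the exponential-tightness criterion of \cite[Corollary 4.19]{FK06} applied with the test class $F = C_c^\infty(M)$, and part (b) via the relaxed-limit/viscosity/Crandall--Liggett machinery and the control-theoretic identification culminating in \cite[Theorem 8.27 and Corollary 8.28]{FK06}, with the needed hypotheses supplied by Propositions~\ref{proposition:FK8.9} and~\ref{proposition:FK8.11}. The one small step you leave implicit is that the rate function delivered by \cite{FK06} appears as an infimum over relaxed control measures, from which the paper passes to the stated integral form using convexity of $\cL$ in the velocity variable together with Jensen's inequality.
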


\begin{proof}[Proof of Theorem \ref{theorem:abstract_LDP}]
	
	(a) follows from Proposition \ref{proposition:exp_compact_containment} and Corollary 4.19 in in \cite{FK06}. The conditions of Corollary 4.19 can be verified by taking $F = C_c^\infty(M)$.
	
	(b) follows from Theorem 8.27 and Corollary 8.28 in \cite{FK06} by taking $\bfH_\dagger = \bfH_\ddagger =H$ and $\cA f(x,v) = \ip{v}{\dd f(x)}$. Note that definitions and results in the control theory chapter, Chapter 8, of \cite{FK06}, carry over verbatim by replacing product of the state-space and a control space by the tangent space. The conditions for the application of these results have been verified in Propositions \ref{proposition:FK8.9} and \ref{proposition:FK8.11}. Finally, note that the rate function in \cite{FK06} still involves an infimum over control measures. As our Lagrangian is convex in the speed variable, Jensen's inequality gives the final form.
\end{proof}

%
%
%
%


\section{Classical LDP theorems on Riemannian manifolds via the Feng-Kurtz formalism}\label{section:resultsFK}

In this section we prove Theorems \ref{theorem:Schilder}, \ref{theorem:Mogulskii} and \ref{theorem:Cramer} in Sections \ref{subsection:Schilder}, \ref{subsection:Mogulskiiproof} and \ref{subsection:Cramer} respectively. Before doing so we construct a good containment function for the first two theorems in Section \ref{subsection:containment},

\subsection{Good containment function}\label{subsection:containment}
	
	For the proofs of Schilders's and Mogulskii's theorem, cf. Theorems \ref{subsection:Schilder} and \ref{subsection:Mogulskiiproof}, we argue via Theorem \ref{theorem:abstract_LDP} for which we need a good containment function. We construct one containment function that will suffice for both proofs. We use the following proposition

\begin{proposition}\label{proposition:distancelikefunction}
	Let $(M,g)$ be a complete Riemannian manifold of dimension $k$. Fix $x_0 \in M$ and define $r(x) := d(x,x_0)$. There exists a smooth function $f \in C^\infty(M)$ such that $\vn{f -r } \leq 1$ and $|\dd f| \leq 2$.
\end{proposition}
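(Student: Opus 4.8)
The plan is to mollify the distance function $r(x) = d(x,x_0)$, which is Lipschitz with $|\dd r| \le 1$ wherever it is differentiable, but fails to be smooth on the cut locus $C_{x_0}$ and at $x_0$ itself. The key point is that $r$ is $1$-Lipschitz globally, so a smoothing at a fixed small scale will distort both the values and the gradient by only a controlled amount. Since a complete Riemannian manifold need not have a positive lower bound on its injectivity radius, one cannot mollify ``in normal coordinates'' at a uniform scale; instead I would use a partition-of-unity / local-mollification argument whose scale is allowed to shrink near ``bad'' regions, or — more cleanly — quote/adapt a standard smoothing result for Lipschitz functions on Riemannian manifolds (e.g. Greene–Wu approximation), which produces for any $\varepsilon>0$ a smooth $f$ with $\|f-r\|_\infty \le \varepsilon$ and $|\dd f| \le \mathrm{Lip}(r) + \varepsilon = 1+\varepsilon$ everywhere. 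Taking $\varepsilon = 1$ (or $\varepsilon \le 1$) gives $\|f - r\|_\infty \le 1$ and $|\dd f| \le 2$, as required.

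The steps, in order: (1) Record that $r$ is $1$-Lipschitz on $(M,d)$, hence $|\dd r|_{g(x)} \le 1$ for a.e.\ $x$ (at points of differentiability, $\dd_x r$ is dual to the unit initial velocity of the minimizing geodesic). (2) Cover $M$ by countably many relatively compact charts $\{U_i\}$ with a subordinate smooth partition of unity $\{\rho_i\}$, chosen locally finite. (3) On each $U_i$, convolve $r$ (read in coordinates) against a standard mollifier at a scale $\delta_i>0$ small enough that the mollified function $r_i$ satisfies $|r_i - r| \le \eta_i$ and $|\dd r_i| \le 1 + \eta_i$ on $\mathrm{supp}\,\rho_i$, where $\eta_i$ is chosen so that $\sum_i \eta_i \le 1$ (possible since $r$ is Lipschitz: convolution of a $1$-Lipschitz function is again $1$-Lipschitz up to an error that vanishes with the scale, and the $C^0$-error is bounded by $\delta_i \cdot \mathrm{Lip}$). (4) Set $f = \sum_i \rho_i r_i$. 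Then $f$ is smooth, and $f - r = \sum_i \rho_i (r_i - r)$ gives $|f - r| \le \sum_i \rho_i \eta_i \le 1$. For the gradient, write $\dd f = \sum_i \rho_i \,\dd r_i + \sum_i (r_i - r)\,\dd \rho_i$; since $\sum_i \dd\rho_i = 0$ one may replace $r_i$ by $r_i - r$ in the second sum, and control it by the $C^0$-errors $\eta_i$, while the first sum is a convex combination of the $\dd r_i$ with $|\dd r_i| \le 1 + \eta_i$. Choosing the $\eta_i$ (and, for the gradient-of-partition term, also the local finiteness bounds) small enough yields $|\dd f| \le 2$.

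The main obstacle is the absence of a uniform injectivity-radius bound, which is exactly what forces the scale $\delta_i$ of mollification to be position-dependent and makes one juggle the errors $\eta_i$ against the (position-dependent) sizes of $|\dd\rho_i|$ in the second sum of the Leibniz expansion of $\dd f$. The fix is standard: first fix the partition of unity $\{\rho_i\}$ (hence the finite constants $C_i := \sup |\dd \rho_i|$ and the local finiteness degree), and only \emph{then} pick each $\delta_i$ small enough that $\eta_i \le 2^{-i}/(1 + C_i \cdot (\text{local multiplicity}))$, so that both the $C^0$ perturbation and the gradient-correction term are summably small. Everything else — that convolution of a Lipschitz function in a chart is $C^\infty$, close in sup norm, and has gradient bounded by the Lipschitz constant plus a vanishing error — is routine. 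If one prefers to avoid the hands-on construction entirely, one can simply invoke the Greene–Wu smoothing theorem for Lipschitz functions on Riemannian manifolds, which directly delivers $f$ with the two required estimates.
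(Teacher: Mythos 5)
The paper states this proposition without proof: none of the appendix proofs corresponds to it, and it is tacitly invoked as a standard Riemannian smoothing fact. Your plan is precisely that standard argument --- regard $r$ as a globally $1$-Lipschitz function, mollify locally in charts at a position-dependent scale, and glue by a locally finite partition of unity --- or, as you note, one can simply cite the Greene--Wu smoothing theorem. The structure of the argument, and in particular your fix for the quantifier order (fix $\{\rho_i\}$ and the local multiplicities first, then choose $\delta_i$, hence $\eta_i$), are correct, and the final estimates follow as you describe.

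One step is elided in your parenthetical ``convolution of a $1$-Lipschitz function is again $1$-Lipschitz up to an error that vanishes with the scale.'' In Euclidean space, convolution preserves the Lipschitz constant exactly; there is no error from the convolution per se. The error you actually need comes from the chart-to-manifold metric comparison: in coordinates the pullback $\tilde r$ satisfies $g^{ij}(y)\,\partial_i\tilde r(y)\,\partial_j\tilde r(y)\le 1$ a.e., and the mollified function $\tilde r_\delta$ is estimated via Jensen,
\[
g^{ij}(y)\,\partial_i\tilde r_\delta(y)\,\partial_j\tilde r_\delta(y)\;\le\;\int g^{ij}(y)\,\partial_i\tilde r(y-z)\,\partial_j\tilde r(y-z)\,\psi_\delta(z)\ud z,
\]
after which one trades $g^{ij}(y)$ for $g^{ij}(y-z)$ at an error that goes to $0$ with $\delta$, uniformly on the compact set $\mathrm{supp}\,\rho_i\subset U_i$. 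This is routine, but it is the actual content behind the Riemannian bound $|\dd r_i|_g\le 1+\eta_i$ that you feed into the Leibniz expansion of $\dd f$. With that spelled out, the argument is complete, and in fact gives $|\dd f|\le 1+\varepsilon$ for any $\varepsilon>0$, which is stronger than the $|\dd f|\le 2$ the proposition requires.
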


	Consider the function $f$ as in the above proposition and set
	\begin{equation*}
	\Upsilon(x) = \log(1 + f^2(x)).
	\end{equation*}

\begin{lemma}\label{lemma:goodcontainment}
	Let either $\cH$ be given by
	\begin{equation}\label{equation:HamiltonianMogulskii}
	\cH(x,p) = \log \int_{T_xM} e^{\ip{v}{p}}\mu_x(\dd v)
	\end{equation}
	or
	\begin{equation}\label{equation:HamiltonianSchilder}
	\cH(x,p) = \frac12|p|^2_{g(x)}. 
	\end{equation}
	Then $\Upsilon$ is a good containment function for $\cH$.
\end{lemma}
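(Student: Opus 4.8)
The plan is to check the four conditions ($\Upsilon$a)--($\Upsilon$d) of Definition \ref{definition:goodcontainment} for $\Upsilon(x) = \log(1+f^2(x))$, treating both choices of $\cH$ in parallel until the final step, where they are handled separately.

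Conditions ($\Upsilon$b) and ($\Upsilon$c) are soft. Since $t \mapsto \log(1+t^2)$ is smooth and $f$ is smooth by Proposition \ref{proposition:distancelikefunction}, the composition $\Upsilon$ lies in $C^\infty(M) \subseteq C^2(M)$, giving ($\Upsilon$b). For ($\Upsilon$c), one has $\{\Upsilon \le c\} = \{|f| \le \sqrt{e^c-1}\}$, and the bound $\vn{f-r}\le 1$ forces this set to lie inside the closed metric ball $\overline{B}(x_0, 1+\sqrt{e^c-1})$; by completeness of $M$ (Hopf--Rinow) such balls are compact, and a closed subset of a compact set is compact, so ($\Upsilon$c) holds. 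Condition ($\Upsilon$a) is then immediate: $\Upsilon \ge 0$ everywhere, and one subtracts $\min_M\Upsilon$, which is attained by ($\Upsilon$c) together with continuity and which changes neither $\dd\Upsilon$ nor ($\Upsilon$b)--($\Upsilon$c).

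The substance is in ($\Upsilon$d), and the key observation is a \emph{uniform} pointwise bound on $\dd\Upsilon$. Differentiating,
$$
\dd\Upsilon(z) = \frac{2f(z)}{1+f^2(z)}\,\dd f(z),
$$
so from $|\dd f|\le 2$ and the elementary inequality $\tfrac{|t|}{1+t^2}\le\tfrac12$ we get $|\dd\Upsilon(z)|_{g(z)}\le 2$ for every $z\in M$. In the Schilder case \eqref{equation:HamiltonianSchilder}, $\cH(z,\dd\Upsilon(z)) = \tfrac12|\dd\Upsilon(z)|_{g(z)}^2 \le 2$, so $\sup_z\cH(z,\dd\Upsilon(z))<\infty$ and we are done. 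In the Mogulskii case \eqref{equation:HamiltonianMogulskii}, $\cH(z,p)=\Lambda_z(p)$; here I would use the consistency property through Proposition \ref{prop:logmgf} to write $\Lambda_z(\dd\Upsilon(z)) = \Lambda_{x_0}(\tau_{zx_0}\dd\Upsilon(z))$, parallel transport being taken along any smooth curve from $z$ to $x_0$. Since parallel transport is an isometry, $|\tau_{zx_0}\dd\Upsilon(z)|_{g(x_0)} = |\dd\Upsilon(z)|_{g(z)}\le 2$, so all the cotangent vectors appearing on the right lie in the single compact ball $B^* := \{q\in T_{x_0}^*M : |q|_{g(x_0)}\le 2\}$. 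The function $\Lambda_{x_0}$ is finite and convex on the finite-dimensional space $T_{x_0}^*M$, hence continuous, hence bounded on $B^*$ by some $C<\infty$; therefore $\sup_z\cH(z,\dd\Upsilon(z))\le C<\infty$.

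The only genuinely delicate point is this last reduction: a priori $\sup_z\Lambda_z(\dd\Upsilon(z))$ ranges over all base points of the manifold, and nothing in the hypotheses bounds $\Lambda_z$ uniformly in $z$ --- except the parallel-transport invariance of $\{\mu_x\}$, which is precisely what lets us pull everything back to $\Lambda_{x_0}$ evaluated on one fixed compact ball. Once the estimate $|\dd\Upsilon|\le 2$ is in hand, the rest is routine.
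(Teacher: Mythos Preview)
Your proof is correct and follows essentially the same route as the paper's: both compute $\dd\Upsilon = \tfrac{2f}{1+f^2}\dd f$, extract a uniform bound on $|\dd\Upsilon|$, and then invoke the consistency property of $\{\mu_x\}$ to push the supremum in ($\Upsilon$d) down to a single tangent space. Your treatment of ($\Upsilon$a) is in fact slightly more careful than the paper's --- the paper simply asserts $\Upsilon(x_0)=0$, which is not guaranteed by Proposition~\ref{proposition:distancelikefunction} (one only knows $|f(x_0)|\le 1$), whereas your subtraction of the attained minimum fixes this cleanly without affecting the other conditions.
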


\begin{proof}
	Clearly $\Upsilon \geq 0$ and $\Upsilon(x_0) = 0$, and $\Upsilon \in C^\infty(M)$.
	
	Now fix $c \geq 0$. By the continuity of $\Upsilon$, the set $\{x \in M\, | \, \Upsilon(x) \leq c\}$ is closed.  By definition the set is bounded, and as $M$ is a finite dimensional manifold, also compact. 
	
	Now consider the Hamiltonian $\cH$ in \eqref{equation:HamiltonianMogulskii}. Note that for all $x \in M\setminus\{x_0\}$
	$$
	\dd\Upsilon(x) = \frac{2f(x)}{1 + f^2(x)}\dd f(x).
	$$
	Consequently, $|\dd\Upsilon(x)| \leq |\dd f(x)| \leq C$ because $\dd f$ is uniformly bounded. But then
	$$
	\cH(x,\dd\Upsilon(x)) = \log \int_{T_xM} e^{\ip{v}{\dd \Upsilon(x)}}\mu_x(\dd v) \leq \log \int_{T_xM} e^{C|v|_{g(x)}}\mu_x(\dd v) =: C_x < \infty,
	$$
	where $C_x$ is finite as we assume the log moment generating function of $\mu_x$ to be finite. By the consistency property (as in Definition \ref{definition:consistency}), $C_x$ actually does not depend on $x$. Consequently, we find that $\sup_{x\in M} \cH(x,\dd\Upsilon(x)) < \infty$. That the same holds for the Hamiltonian as in \eqref{equation:HamiltonianSchilder} follows immediately from the uniform boundedness of $\dd\Upsilon$.
\end{proof}

\subsection{Proof of Schilder's Theorem, Theorem \ref{theorem:Schilder}}\label{subsection:Schilder}

In this section we provide an alternative proof of Schilder's theorem for Riemannian Brownian motion based on Theorem \ref{theorem:abstract_LDP}.

\begin{proof}[Proof of Theorem \ref{theorem:Schilder}]
	We verify the conditions for Theorem \ref{theorem:abstract_LDP}. 
	
	\textit{Step 1:} We calculate $H_n$ and limit $H$. The process $W_t$ solves the martingale problem for the operator $\frac12\Delta_M$ and consequently, $W_t^{n^{-1}}$ is generated by $\frac{1}{2n}\Delta_M$. For $f \in C_c^\infty(M)$, we find
	\begin{align*}
	H_nf 		&= \frac1n e^{-nf}\frac{1}{2n}\Delta_Me^{nf}\\
	&= \frac1n e^{-nf}e^{nf}\frac12(\Delta_Mf + n\sn{\dd f}_{g(x)}^2)\\
	&= \frac1{2n}\Delta_Mf + \frac12\sn{\dd f}_{g(x)}^2.
	\end{align*}
	Let $H \subseteq C_b(M) \times C_b(M)$ be given by $\cD(H) = C_c^\infty(M)$ and for $f \in C_c^\infty(M)$: $Hf = \tfrac{1}{2} \sn{\dd f}^2_{g(x)}$ .
	
	It follows that for all $f \in C^\infty_c(M)$, we have
	\begin{equation*}
	\lim_{n \rightarrow \infty} \vn{H_n f - Hf} = 0,
	\end{equation*}
	implying that $H \subseteq \LIM H_n$. Note that $Hf(x) = \cH(x, \dd f(x))$ for $\cH : TM \rightarrow \bR$ of the form $H(x,p) = \tfrac{1}{2} |p|^2_{g(x)}$.
	
	\textit{Step 2:} By Lemma \ref{lemma:goodcontainment} we have a good containment function $\Upsilon$.

	\textit{Step 3:} Fix $\lambda > 0$ and $h \in C_b(M)$. We verify the comparison principle for $f - \lambda Hf = h$ by the application of Proposition \ref{proposition:comparison_conditions_on_H}.
	Let $x_{\alpha,\epsilon},y_{\alpha,\epsilon}$ be as in Proposition \ref{proposition:comparison_conditions_on_H}. We establish \eqref{condH:negative:liminf}. 
	
	Fix $\varepsilon > 0$. By Lemma \ref{lemma:doubling_lemma}, there is a compact $K^\epsilon \subseteq M$ such that  $\{x_{\alpha,\epsilon},y_{\alpha,\epsilon}\, | \, \alpha > 0\}$ is contained in $K^\epsilon$. By the continuity of the injectivity radius and the compactness of $K^\epsilon$, we can find a $\delta > 0$ such that $i(K^\epsilon) \geq \delta > 0$. Then there exists a unique geodesic of minimal length connecting $x$ and $y$. By Proposition \ref{proposition:transport_derivative_distance} we have
	$$
	\dd d^2(\cdot,y)(x) = -\tau_{xy}\dd d^2(x,\cdot)(y),
	$$
	where $\tau_{xy}$ denotes parallel transport along the unique geodesic of minimal length connecting $x$ and $y$. As parallel transport is an isometry, we find
	\begin{align*}
	\Hh\left(x, \frac{\alpha}{2} (\dd d^2(\cdot, y))(x) \right)
	&=
	\frac12 \sn{\frac{\alpha}{2} (\dd d^2(\cdot, y))(x)}^2_{g(x)}
	\\
	&=
	\frac12\sn{-\frac{\alpha}{2} (\dd d^2(x, \cdot))(y)}^2_{g(y)}
	\\
	&=
	\Hh\left(y, - \frac{\alpha}{2}  (\dd d^2(x,\cdot))(y) \right).
	\end{align*}

Consequently, for $x_{\alpha,\epsilon},y_{\alpha,\epsilon}$ with $d(x_{\alpha,\epsilon},y_{\alpha,\epsilon}) < \delta$ we find
	$$
	\Hh\left(x_{\alpha,\epsilon}, \frac{\alpha}{2} (\dd d^2(\cdot, y_{\alpha,\epsilon}))(x_{\alpha,\epsilon}) \right) -  \Hh\left(y_{\alpha,\epsilon}, - \frac{\alpha}{2}  (\dd d^2(x_{\alpha,\epsilon},\cdot))(y_{\alpha,\epsilon}) \right) = 0.
	$$
	By Proposition \ref{proposition:comparison_conditions_on_H}, we can conclude that $f - \lambda Hf = h$ satisfies the comparison principle.\\

	By Theorem \ref{theorem:abstract_LDP}, the measures $\{\nu_n\}_{n\geq 1}$ satisfy in $D(\bR^+,M)$ the large deviation principle with good rate function given by \eqref{eq:rfmog}. As the topology of $D(\bR^+,M)$ restricted to $C(\bR^+,M)$ reduces to the uniform topology, the same LDP holds in $C(\bR^+,M)$, concluding the proof.
\end{proof}

\subsection{Proof of Mogulskii's Theorem, Theorem \ref{theorem:Mogulskii}}\label{subsection:Mogulskiiproof}

In this section, we prove the analogue of Mogulskii's theorem for time-scaled geodesic random walks.

\begin{proof}[Proof of Theorem \ref{theorem:Mogulskii}]
	The proof is similar as the proof of Theorem \ref{theorem:Schilder}. We verify the conditions for Theorem \ref{theorem:abstract_LDP}.

	\textit{Step 1:} We start by calculating $H_n$ and its limit $H$. Observe that for every $n \in \NN$ the sequence $\{\frac1n*\Ss_k\}_{k\geq 1}$  (taking time-step size $\varepsilon_n = n^{-1}$) is a Markov chain with transition operator given by
	$$
	T_nf(x) = \EE\left(f\left(\frac1n*\Ss_{k+1}\right)\middle|\frac1n*\Ss_k = x\right) = \int_{T_xM} f(\Exp_x(n^{-1}v)) \mu_x(\dd v). 
	$$
	Using this, for every $n \in \NN$ we can compute the Hamiltonian
	$$
	H_nf(x) 
	= 
	\log e^{-nf}T_ne^{nf}(x)
	=
	\log \int_{T_xM} e^{n(f(\exp_x(n^{-1}v) - f(x))}\mu_x(\dd v).
	$$
	Consequently, as $\Lambda_x(p) < \infty$ for all $x \in M$ and $p \in T_x^*M$, we find for $f \in C^\infty_c(M)$ that
	\begin{equation*} 
	Hf(x) := \lim_{n\to\infty} H_nf(x) = \log\int_{T_xM} e^{\ip{v}{\dd f(x)}} \mu_x(\dd v)
	\end{equation*}
	uniformly in $x$, so that we can take $\Dd(H) = f \in C^\infty_c(M)$. Note that indeed $H$ has the form $Hf(x) = \cH(x,\dd f(x))$ for a continuous map $\cH: T^*M \rightarrow \bR$ that is convex in the second coordinate. This implies that Assumption \ref{assumption:Hamiltonian_on_cotangentbundle} is satisfied and $\cH$ is given by 
	\begin{equation}\label{eqn:form_of_H_Mogulskii}
	\cH(x,p) = \log \int_{T_xM} e^{\inp{v}{p}}\mu_x(\dd v) = \Lambda_x(p).
	\end{equation}
	
	\textit{Step 2:} By Lemma \ref{lemma:goodcontainment} we have a good containment function $\Upsilon$.

	\textit{Step 3:} Fix $\lambda > 0$ and $h \in C_b(M)$. We verify the comparison principle for $f - \lambda Hf = h$ by the application of Proposition \ref{proposition:comparison_conditions_on_H}.
	Let $x_{\alpha,\epsilon},y_{\alpha,\epsilon}$ be as in Proposition\ref{proposition:comparison_conditions_on_H}. We establish \eqref{condH:negative:liminf}. 
	
	Fix $\varepsilon > 0$. By Lemma \ref{lemma:doubling_lemma}, there is a compact $K^\epsilon \subseteq M$ such that  $\{x_{\alpha,\epsilon},y_{\alpha,\epsilon}| \alpha > 0\}$ is contained in $K^\epsilon$. By the continuity of the injectivity radius and the compactness of $K^\epsilon$, we can find a $\delta > 0$ such that $i(K^\epsilon) \geq \delta > 0$. Now for $x,y \in K^\epsilon$ with $d(x,y) < \delta$ we find
	\begin{align*}
	\cH\left(x, \frac{\alpha}{2} (\dd d^2(\cdot, y))(x) \right)
	&=
	\Lambda_x\left(\frac{\alpha}{2} (\dd d^2(\cdot, y))(x) \right)
	\\
	&=
	\Lambda_y\left(\frac{\alpha}{2} \tau_{xy}(\dd d^2(\cdot, y))(x) \right)
	\\
	&=
	\Lambda_y\left(-\frac{\alpha}{2}(\dd d^2(x, \cdot))(y) \right)
	\\
	&= 
	\cH\left(y, -\frac{\alpha}{2} (\dd d^2(x, \cdot))(y) \right).
	\end{align*}
	Here $\tau_{xy}$ denotes parallel transport along the unique geodesic of minimal length connecting $x$ and $y$. The second equality follows from proposition \ref{prop:logmgf} and the third from Proposition \ref{proposition:transport_derivative_distance}. We thus find for $x_{\alpha,\epsilon},y_{\alpha,\epsilon}$ with $d(x_{\alpha,\epsilon},y_{\alpha,\epsilon}) < \delta$ that
	$$
	\Hh\left(x_{\alpha,\epsilon}, \frac{\alpha}{2} (\dd d^2(\cdot, y_{\alpha,\epsilon}))(x_{\alpha,\epsilon}) \right) - \Hh\left(y_{\alpha,\epsilon}, -\frac{\alpha}{2} (\dd d^2(x_{\alpha,\epsilon}, \cdot))(y_{\alpha,\epsilon}) \right) = 0.
	$$
	Consequently, by Proposition \ref{proposition:comparison_conditions_on_H} we find that $H$ satisfies the comparison principle.
	
	By Proposition \ref{prop:logmgf_cont_diffb} $\cH$ is continuously differentiable and hence Theorem \ref{theorem:abstract_LDP} implies that the measures $\{\nu_n\}_{n\geq 1}$ satisfy in $D(\bR^+,M)$ the large deviation principle with good rate function given by \eqref{eq:rfmog}. 
	
\end{proof}

\subsection{Proof of Cram\'er's Theorem, Theorem \ref{theorem:Cramer}}\label{subsection:Cramer}

We can now obtain the analogue of Cram\'er's theorem for Riemannian manifolds from Mogulskii's theorem via the contraction principle.


\begin{proof}[Proof of Theorem \ref{theorem:Cramer}]
Let $\tilde\nu_n$ be the measures of $Z_n(t) = \frac1n*\Ss_{\lfloor nt\rfloor}$ in $D(\RR^+;M)$. By Theorem \ref{theorem:Mogulskii}, we know that $\{\tilde\nu_n\}_{n\geq 1}$ satisfies in $D(\RR^+;M)$ the LDP with good
rate function $\tilde I$ given by \eqref{eq:rfmog}. Define $f:D(\RR^+;M) \to M$ by $f(\gamma) = \gamma(1)$. Then $f$ is continuous on $C(\RR^+,M) \subseteq D(\RR^+,M)$ and for every $n \in \NN$ we have $\nu_n = \tilde\nu_n \circ f^{-1}$. As the rate function in Mogulskii's theorem is finite only for continuous paths, we can apply the contraction principle and obtain that $\{\nu_n\}$ satisfies in $M$ the LDP with good rate function
	$$
	\tilde I_M(x) = \inf\{\tilde I(\gamma)|f(\gamma) = x\}.
	$$
	It remains to show that $\tilde I_M = I_M$. For this it is sufficient to show that we only need to consider geodesics between $x_0$ and $x$ when taking the infimum in the definition of $\tilde I_M$. To this end, let $U$ be uniformly distributed over $[0,1]$. We can write
	\begin{align*}
	\tilde I(\gamma) 
	&= 
	\int_0^1 \Lambda_{\mu_{x_0}}^*(\tau_{0\gamma(t)}^{-1}\dot\gamma(t))\ud t\\
	&= 
	\EE(\Lambda_{\mu_{x_0}}^*(\tau_{0\gamma(U)}^{-1}\dot\gamma(U)))\\
	&\geq
	\Lambda_{\mu_{x_0}}^*(\EE(\tau_{0\gamma(U)}^{-1}\dot\gamma(U))),
	\end{align*}
	
	where the last line follows from Jensen's inequality. This gives us a lower bound for $\tilde I(\gamma)$. The lower bound is achieved when 
	$$
	\tau_{0\gamma(U)}^{-1}\dot\gamma(U) = \EE(\tau_{0\gamma(U)}^{-1}\dot\gamma(U))
	$$
	almost surely. This holds if $\tau_{0\gamma(U)}^{-1}\dot\gamma(U)$ is constant, i.e. if $\gamma$ is a geodesic. This shows that for every curve $\gamma$ there exists a geodesic $\gamma'$ such that $\tilde I(\gamma') \leq \tilde I(\gamma)$. We obtain that $\tilde I_M$ is given by
	$$
	\tilde I_M(x) = \inf\left\{\int_0^1 \Lambda_{\mu_{\gamma(t)}}^*(\dot\gamma(t))\ud t\middle|\gamma:[0,1]\to M \mbox{ geodesic}, \gamma(0) = x_0, \gamma(1) = x\right\}.
	$$
	Now observe that if $\gamma$ is a geodesic, then $\dot\gamma$ is parallel along $\gamma$. Proposition \ref{prop:logmgf} implies that for all $t \in [0,1]$ 
	$$
	\Lambda_{\mu_{\gamma(t)}}^*(\dot\gamma(t)) = \Lambda_{\mu_{\gamma(0)}}^*(\dot\gamma(0))
	$$

from which it follows that 
	$$
	\tilde I_M(x) = \inf\left\{\Lambda_{\mu_{x_0}}^*(\dot\gamma(0))\middle|\gamma:[0,1]\to M \mbox{ geodesic}, \gamma(0) = x_0, \gamma(1) = x\right\}.
	$$

It remains to show that the infimum is actually attained. First note that by completeness of the manifold, for every $x$ the set is nonempty. Now if $\tilde I_M(x) = \infty$, then $\Lambda_{\mu_{x_0}}^*(\dot\gamma(0)) = \infty$ for all geodesics $\gamma:[0,1] \to M$ with $\gamma(0) = x_0$ and $\gamma(1) = x$.
Consequently, the infimum is indeed attained.

For the case when $I_M(x) < \infty$, observe that
$$
\Lambda_{\mu_{x_0}}^*(v) \to \infty \mbox{ as } |v|_{g(x_0)} \to \infty.
$$
Hence, there exists a constant $R > 0$ such that $\Lambda_{\mu_{x_0}}^*(v) > \tilde I_M(x) + 1$ whenever $||v||_{g(x_0)} > R$. As $\Exp_{x_0}$ is a continuous map, we have that $\Exp_{x_0}^{-1}(x)$ is closed, and consequently, the set 
$$
\Exp_{x_0}^{-1}(x) \cap \{v \in T_{x_0}M| |v|_{g(x_0)} \leq R\}
$$
is compact. Because $\Lambda_{\mu_{x_0}}^*$ is lower-semicontinuous, it attains its infimum on this compact set. By definition of $R$, we find that $I_M = \tilde I_M$, concluding the proof.

\end{proof}

\appendix

\section{Appendix: Stochastic differential equations on manifolds}\label{section:SDEmanifold}

In this section we will introduce the general theory of stochastic differential equations on manifolds, in which processes can have a finite explosion time. In the given paper, assumptions we make on the geometry assure that the explosion times of the processes we consider are almost surely infinite.\\

We first define what we mean by an $M$-valued semimartingale.

\begin{definition}
Let $M$ be a differentiable manifold, $(\Omega,\Ff,(\Ff_t),\PP)$ a filtered probability space and $\tau$ a stopping time with respect to the filtration $(\Ff_t)$. An \emph{$M$-valued semimartingale} is a continuous $M$-valued process $X$ on $[0,\tau)$ such that $f(X)$ is a real-valued semimartingale on $[0,\tau)$ for all $f \in C^\infty(M)$.
\end{definition} 

$M$-valued semimartingales will serve as the solution of SDEs on $M$, which we will define next. Let $V_1,\ldots,V_l$ be $l$ vector fields on $M$ and let $Z$ be an $\RR^l$-valued semimartingale. Pick an initial value $X_0 \in \Ff_0$, which is an $M$-valued random variable. We consider the stochastic differential equation 
\begin{equation}\label{eq:SDE}
\dd X_t = V_\alpha(X_t)\circ\dd Z_t^\alpha
\end{equation}
and refer to it as $SDE(V_1,\ldots,V_l;Z,X_0)$. 

As in the case of $\RR^k$-valued processes, one can use It\^o's formula to find that
$$
\dd f(X_t) = V_\alpha f(X_t) \circ \dd Z_t^\alpha
$$
for all $f \in C^\infty(\RR^d)$. Inspired by this, we give the following definition of a solution of \eqref{eq:SDE}.

\begin{definition}
An $M$-valued semimartingale $X$ defined up to a stopping time $\tau$ is a solution of \eqref{eq:SDE} up to time $\tau$ if 
\begin{equation}\label{eq:SDEdef}
f(X_t) = f(X_0) + \int_0^t V_\alpha f(X_s)\circ \dd Z_s^\alpha
\end{equation}
for all $0 \leq t \leq \tau$ and for all $f \in C^\infty(M)$.
\end{definition}

A typical approach in studying solutions of SDEs on manifolds is to embed the manifold into some Euclidean space and study a related SDE defined there. Therefore we need to know how solutions of SDEs behave under diffeormorphisms. Given a diffeomorphism $\phi:M \to N$, we can define the \emph{push-forward} $\phi_*:\Gamma(TM) \to \Gamma(TN)$ by
$$
(\phi_*V)f(\phi(x)) = V(f\circ\phi)(x)
$$
where $f \in C^\infty(N)$ and $V \in \Gamma(TM)$. The push-forward is also referred to as the differential of a function between manifolds, and is in that case denoted by $\dd\phi$ rather than $\phi_*$.  The following proposition, which is Proposition 1.2.4 in \cite{Hsu02}, shows that solutions of SDEs behave nicely under diffeomorphisms.
\begin{proposition}\label{prop:difSDE}
Let $\phi:M \to N$ be a diffeomorphism and suppose that $X$ is a solution of
$$
\dd X_t = V_\alpha \circ \dd Z_t^\alpha
$$
on $M$ with given initial value $X_0$. Then $Y = \phi(X)$ is a solution of
$$
\dd Y_t = \phi_*V_\alpha \circ \dd Z_t^\alpha
$$
on $N$ with given initial value $Y_0 = \phi(X_0)$.
\end{proposition}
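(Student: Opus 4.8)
The plan is to verify the defining property of a solution directly, by testing against smooth functions on $N$ and pulling everything back to $M$ via $\phi$. The whole argument is a one-line computation once the definitions are unwound, so I do not expect a serious obstacle; the only points needing a little care are the semimartingale/measurability bookkeeping and the fact that the Stratonovich integrals on both sides are well-defined.

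First I would check that $Y := \phi(X)$ is an $N$-valued semimartingale defined up to the same stopping time $\tau$, with $Y_0 = \phi(X_0) \in \mathcal{F}_0$. Since $\phi$ is a diffeomorphism, for any $g \in C^\infty(N)$ the composition $g \circ \phi$ belongs to $C^\infty(M)$; because $X$ is an $M$-valued semimartingale on $[0,\tau)$, the process $g(Y) = (g\circ\phi)(X)$ is a real-valued semimartingale on $[0,\tau)$. Continuity of $Y$ follows from continuity of $X$ together with smoothness of $\phi$. Hence $Y$ qualifies as an $N$-valued semimartingale.

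Next, fix $g \in C^\infty(N)$. The key identity is the chain rule for the push-forward: by the very definition of $\phi_*$, for every $x \in M$ and every index $\alpha$,
$$
V_\alpha(g\circ\phi)(x) = (\phi_* V_\alpha)g(\phi(x)).
$$
Now apply the solution property of $X$ to the test function $g\circ\phi \in C^\infty(M)$:
$$
(g\circ\phi)(X_t) = (g\circ\phi)(X_0) + \int_0^t V_\alpha(g\circ\phi)(X_s)\circ \dd Z_s^\alpha, \qquad 0 \le t \le \tau.
$$
Substituting the chain-rule identity into the integrand, the right-hand side becomes $g(Y_0) + \int_0^t (\phi_* V_\alpha)g(Y_s)\circ \dd Z_s^\alpha$, while the left-hand side is exactly $g(Y_t)$. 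Since $g \in C^\infty(N)$ was arbitrary, this says precisely that $Y$ is a solution of $\dd Y_t = \phi_* V_\alpha \circ \dd Z_t^\alpha$ with initial value $\phi(X_0)$, which is the claim.

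The only subtlety worth spelling out is that the Stratonovich integrals appearing in the display are well-defined: this requires $V_\alpha(g\circ\phi)(X_\cdot)$ — equivalently $(\phi_* V_\alpha)g(Y_\cdot)$ — to be a real-valued semimartingale, which holds because $V_\alpha(g\circ\phi) \in C^\infty(M)$ and $X$ is an $M$-valued semimartingale, so that we never leave the level of testing against scalar functions, where the Stratonovich integral obeys the ordinary chain rule. One also uses that the push-forward of a smooth vector field along a diffeomorphism is again smooth, so $\phi_*V_\alpha \in \Gamma(TN)$ and the target SDE makes sense.
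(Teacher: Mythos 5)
Your argument is correct and is exactly the standard definition-unwinding proof: test against $g \in C^\infty(N)$, pull back to $g\circ\phi \in C^\infty(M)$, apply the defining integral identity for $X$, and invoke the push-forward identity $V_\alpha(g\circ\phi)(x) = (\phi_*V_\alpha)g(\phi(x))$ to rewrite the integrand. The paper itself does not spell out a proof (it cites Proposition 1.2.4 of Hsu), but the reference proof proceeds in the same way, so there is nothing to add beyond noting that your semimartingale and well-posedness remarks are sound.
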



\section{Appendix: Orthonormal frame bundles}\label{section:framebundle}

In this section we introduce the orthonormal frame bundle. We follow the approach in \cite[Section 2.1]{Hsu02}, restricting to orthonormal frame bundles rather than general frame bundles.

An \emph{orthonormal frame} at a point $x \in M$ is an isometry $u:\RR^k \to T_xM$. Denoting $e_1,\ldots,e_k$ the standard basis of $\RR^k$, the tangent vectors $ue_1,\ldots,ue_k$ form a basis for $T_xM$. We will denote the space of all possible orthonormal frames at $x$ by $OM_x$. This can be made into a bundle $OM = \bigsqcup_{x\in M} OM_x$, which we will refer to as the \emph{orthonormal frame bundle}. This can itself be made into a differentiable manifold of dimension $k + \frac12k(k-1)$ such that the projection $\pi:OM \to M$ is a smooth map.

\subsection{Vertical and horizontal tangent vectors}

A tangent vector $X \in T_uOM$ is called \emph{vertical} if it is tangent to $OM_{\pi u}$. This means that $X$ is the tangent vector of a curve through $u$ which remains in $\{\pi u\} \times OM_{\pi u}$. We denote the space of vertical tangent vectors at $u$ by $V_uOM$, which is a $\frac12k(k-1)$-dimensional subspace of $T_uOM$. Note that the notion of verticality is independent of the connection on the manifold.\\ 

Now suppose that $M$ is supplied with the Levi-Civita connection corresponding to the metric. This allows us to define horizontal tangent vectors as well. We say that a curve $(u_t)_t$ in $OM$ is \emph{horizontal} if for all $e \in \RR^d$ it holds that $(u_te)_t$ is parallel along $(\pi u_t)_t$. A tangent vector $X \in T_uOM$ is now called \emph{horizontal} if it is tangent to a horizontal curve through $u$. We denote the space of horizontal tangent vectors at $u$ by $H_uOM$, which is a $k$-dimensional subspace of $T_uOM$. Additionally, we have that
$$
T_uOM = H_uOM \oplus V_uOM.
$$
The projection $\pi:OM \to M$ gives rise to a homomorphism $\pi_*:T_uOM \to T_{\pi u}M$, which is simply the push-forward of tangent vectors. Its kernel turns out to be $V_uOM$, in which case it induces an isomorphism $\pi_*:H_uOM \to T_{\pi u}M$. This means that for any $X \in T_xM$ and orthonormal frame $u$ at $x$ there exists a unique $X^* \in H_uOM$ such that $\pi_*X^* = X$. We call $X^*$ the \emph{horizontal lift} of $X$ to $u$.

\subsection{Horizontal lift and anti-development}

Given a curve $(x_t)_t$ in $M$, we want to identify it with a curve in $\RR^d$ in a suitable way. This curve will be referred to as the anti-development of $(x_t)_t$. In order to do this, we first need to go via the orthonormal frame bundle. 

Given an initial frame $u_0$ at $x_0$, there exists a unique horizontal curve $(u_t)_t$ in $OM$ such that $\pi u_t = x_t$. We call this the \emph{horizontal lift} of $(x_t)_t$ via $u_0$. It gives rise to a linear map
$$
\tau_{t_0t_1} = u_{t_1}u_{t_0}^{-1}: T_{x_{t_0}}M \to T_{x_{t_1}}M
$$
which is independent of $u_0$ and is referred to as parallel transport along $(x_t)_t$.

The idea of the horizontal lift of a curve is that the coordinates of the parallel transport of a tangent vector remain ''the same''. What is meant by this, is that if $e \in \RR^k$ are the coordinates of a tangent vector $X \in T_{x_{t_0}}M$ with respect to the frame $u_{t_0}$, then they are also the coordinates of its parallel transport $\tau_{t_0t_1}X \in T_{x_{t_1}}M$ with respect to the frame $u_{t_1}$.\\





Using a horizontal lift $(u_t)_t$ of the curve $(x_t)_t$, we can define its anti-development to $\RR^k$. To do this, one first observes that $u_t^{-1}\dot x_t \in \RR^k$. Consequently, we may define
$$
w_t = \int_0^t u_s^{-1}\dot x_s \ud s,
$$
which is a curve in $\RR^k$, called the \emph{anti-development} of $(x_t)_t$. Note that this curve depends on the chosen initial frame, although in a fairly simple way. Indeed, if we choose another initial frame $v_0$ such that $u_0 = v_0g$, then the anti-development becomes $(gw_t)_t$. 

It turns out that we can connect the anti-development and horizontal lift of a curve $(x_t)_t$ in $M$ via an ordinary differential equation on $OM$. It is easy to see that $u_t\dot w_t = \dot x_t$ and consequently, using the definition of horizontal lift, we find
\begin{equation}\label{eq:liftode}
H_{\dot w_t}(u_t) = (u_t\dot w_t)^* = (\dot x_t)^* = \dot u_t.
\end{equation}

We can rewrite \eqref{eq:liftode} by writing $H_{\dot w_t}$ in terms of a basis for $H_{u_t}OM$. We will construct a specific set of basis vectors. Let $\{e_1,\ldots,e_k\}$ be the standard basis of $\RR^k$. For $i = 1,\ldots,k$, define the horizontal vector field $H_i$ by
$$
H_i(u) = (ue_i)^*,
$$
the horizontal lift of the tangent vector $ue_i$. The vector fields $H_1,\ldots,H_k$ are called the \emph{fundamental horizontal vector fields} and it holds that the vectors $\{H_1(u),\ldots,H_k(u)\}$ span $H_uOM$ for any $u \in OM$.



Using these fundamental horizontal vector fields, we may also write \eqref{eq:liftode} as
$$
\dot u_t = H_i(u_t)\dot w_t^i.\\
$$

Equation \eqref{eq:liftode} also allows us to go from a curve $(w_t)_t$ in $\RR^k$ to a curve $(x_t)_t$ in $M$, which will be called the \emph{development} of $(w_t)_t$ to $M$. Indeed, starting from a curve $(w_t)_t$ in $\RR^d$, the unique solution to \eqref{eq:liftode} is a horizontal curve $(u_t)_t$ in $OM$. Projecting it to $M$ then gives us a curve $(x_t)_t$ in $M$. This procedure is often referred to as 'rolling without slipping'.\\


\section{Appendix: Proofs of certain propositions and lemmas}\label{section:proofs_propositions_lemmas}

In this appendix we collect the proofs of several propositions and lemmas used throughout the paper. 

\subsection{Proof of Proposition \ref{proposition:transport_derivative_distance}}

For a path $h:[0,1]\to M$, define the Lagrangian
$$
L(h(t)) = \inp{\dot h(t)}{\dot h(t)}_{g(h(t))} = |\dot h(t)|^2_{g(h(t))}
$$
and the action
$$
S(h) = \int_0^1 L(h(t))\ud t.
$$

Observe that for $x, y \in M$ we have
$$
d^2(x,y) = \inf\{S(h)| h(0) = x, h(1) = y, h \mbox{ piecewise smooth}\}.
$$

If $y \notin C_x$, there is an optimal path $\gamma:[0,1] \to M$ for $S$, the geodesic of minimal length connecting $x$ and $y$. Note that the differential of the action in the starting point equals the momentum of the optimal path $\gamma$ in 0 (see e.g. \cite[Chapter 3]{Arn89}). In coordinates one finds that the $j$-th component of this momentum equals
$$
p_j = \frac{\partial L}{\partial \dot h^j(t)}(\gamma(t)) = 2\sum_{i=1}^k g_{ij}(\gamma(t))\dot\gamma^i(t) = 2(\dot\gamma(t))^\#_j
$$
where $V^\#$ denotes the covector dual to $V$. 

Consequently, we find that $\dd_xd^2(x,y) = 2(\dot \gamma(0))^\#$, where $\gamma$ is the geodesic of minimum length connecting $x$ and $y$. In particular, this implies that for any $V \in T_xM$ we have $\dd_xd^2(x,y)(V) = 2\inp{V}{\dot\gamma(0)}_{g(x)}$. Defining $\gamma^-(t) := \gamma(1 - t)$, $\gamma^-$ is the geodesic of minimum length connecting $y$ and $x$. We obtain $\dd_yd^2(x,y) = 2(\dot \gamma^-(0))^\# = -2(\dot\gamma(1))^\#$. Noticing that $\dot\gamma(1)$ is the parallel transport of $\dot\gamma(0)$ now proves the claim.

\subsection{Proof of Theorem \ref{thm:FW}}

We can rewrite SDE \eqref{eq:SDEFW} to obtain the It\^o SDE
\begin{equation}\label{eq:SDEFW1}
\dd Y_t^\epsilon = b(Y_t^\epsilon)\ud t + \frac12\epsilon (D\sigma\cdot\sigma)(Y_t^\epsilon)\ud t + \sqrt{\epsilon}\sigma(Y_t^\epsilon)\ud W_t,
\end{equation}
where $D\sigma$ denotes the total differential of $\sigma$. 

Now suppose that $\tilde Y_t^\epsilon$ satisfies the It\^o SDE
\begin{equation}\label{eq:SDEFW2}
\dd \tilde Y_t^\epsilon = b(\tilde Y_t^\epsilon)\ud t + \sqrt{\epsilon}\sigma(\tilde Y_t^\epsilon)\ud W_t, \qquad \tilde Y_0^\epsilon \stackrel{D}{=} Y_0^\epsilon.
\end{equation}
By Theorem \ref{theorem:FWIto}, we find that $\tilde Y_t^\epsilon$ satisfies the LDP in $C(\RR^+;\RR^k)$ with the good rate function $I$ as in \eqref{equation:RFFW}. To complete the proof, it suffices to show that $Y_t^\epsilon$ and $\tilde Y_t^\epsilon$ are exponentially equivalent in $C([0,T];\RR^k)$ for all $T > 0$.\\

Fix $T > 0$. Consider the joint law of $Y_0^\epsilon$ and $\tilde Y_0^\epsilon$ obtained by setting $Y_0^\epsilon = \tilde Y_0^\epsilon$. Consider the following system of stochastic differential equations
$$
\begin{cases}
\dd Y_t^\epsilon = b(Y_t^\epsilon)\ud t + \frac12\epsilon(D\sigma\cdot\sigma)(Y_t^\epsilon)\ud t + \sqrt{\epsilon}\sigma(Y_t^\epsilon)\ud W_t\\
\dd \tilde Y_t^\epsilon = b(\tilde Y_t^\epsilon)\ud t + \sqrt{\epsilon}\sigma(\tilde Y_t^\epsilon)\ud W_t\\
\dd Z_t^\epsilon = b(Y_t^\epsilon)\ud t + \sqrt{\epsilon}\sigma(Y_t^\epsilon)\ud W_t,\\
\end{cases}
$$
where $Y_0^\epsilon$ has some given distribution and $Z_0^\epsilon = \tilde Y_0^\epsilon = Y_0^\epsilon$.

First note that
$$
\dd(Z_t^\epsilon - Y_t^\epsilon) = -\frac12\epsilon(D\sigma\cdot\sigma)(Y_t^\epsilon)\ud t.
$$
As $\sigma$ is Lipschitz continuous, $D\sigma$ is bounded, which together with the boundedness of $\sigma$ implies that $|Z_t^\epsilon - Y_t^\epsilon| \leq CT\epsilon$ for some constant $C$ which only depends on the bound and Lipschitz constant of $\sigma$ and the dimension $k$.

Furthermore, we have
$$
\dd(\tilde Y_t^\epsilon - Z_t^\epsilon) = (b(\tilde Y_t^\epsilon) - b(Y_t^\epsilon)) \dd t + \sqrt\epsilon(\sigma(\tilde Y_t^\epsilon) - \sigma(Y_t^\epsilon))\ud W_t.
$$
By the estimate above, and the Lipschitz continuity of $b$ we find that
\begin{align*}
|b(\tilde Y_t^\epsilon) - b(Y_t^\epsilon)| 	&\leq B|\tilde Y_t^\epsilon - Y_t^\epsilon|\\
							&\leq B(|\tilde Y_t^\epsilon - Z_t^\epsilon| + |Z_t^\epsilon - Y_t^\epsilon|)\\
							&\leq B\sqrt2(|\tilde Y_t^\epsilon - Z_t^\epsilon|^2 + |Z_t^\epsilon - Y_t^\epsilon|^2)^{1/2}\\
							&\leq B\sqrt2(|\tilde Y_t^\epsilon - Z_t^\epsilon|^2 + C^2T^2\epsilon^2)^{1/2}.
\end{align*}
A similar estimate holds with $\sigma$ instead of $b$. Consequently, 
by Lemma 5.6.18 in \cite{DZ98} we find for $\delta > 0$ that
$$
\limsup_{\epsilon \to 0} \epsilon \log P(\sup_{0 \leq t \leq T}|\tilde Y_t^\epsilon - Z_t^\epsilon| \geq \delta) \leq \limsup_{\epsilon \to 0} K + \log\left(\frac{C^2\epsilon^2}{C^2\epsilon^2 + \delta^2}\right) = -\infty.
$$
This shows that $\tilde Y_t^\epsilon$ and $Z_t^\epsilon$ are exponentially equivalent. Because $Y_t^\epsilon$ and $Z_t^\epsilon$ are clearly also exponentially equivalent, we conclude that $Y_t^\epsilon$ and $\tilde Y_t^\epsilon$ are exponentially equivalent as desired.

\subsection{Proof of Proposition \ref{prop:stopping}}

The proof is based on the proof of Theorem 3.6.1 in \cite{Hsu02}. Define the radial process $R_t = d(W_t,x_0)$. There exists a standard Eucledian Brownian Motion $B_t$ such that
$$
R_t = B_t + \frac12\int_0^t\Delta_MR(W_s)\ud s - L_t
$$
where $L_t$ is a nondecreasing process which only increases on the cutlocus of $x_0$.

By It\^o's formula we have
$$
R_t^2 = 2\int_0^t R_s\ud R_s + [R,R]_t.
$$

Using the expression for $R_t$, we immediately see that
$$
[R,R]_t = [B,B]_t = t,
$$
where the latter holds as $B$ is standard Euclidean Brownian motion.

Remembering that $L_t$ is nondecreasing, we obtain
$$
R_t^2 \leq 2\int_0^tR_s\ud B_s + \int_0^tR_s\Delta_M R(W_s)\ud s + t.
$$
By the Laplacian comparison theorem (see e.g. \cite[Corollary 3.4.4]{Hsu02}), we obtain, using the lower bound on the Ricci-curvature, that
$$
R_t\Delta_MR_t \leq (k-1)L\coth LR_t \leq (k-1)(1 + L).
$$

Now let $T_\delta$ be the first exit time of $W_t$ from the geodesic ball $B(x_0,\delta)$. Combining the inequalities, we obtain
$$
\delta^2 \leq 2\int_0^{T_\delta} R_s\ud B_s + (k-1)(1 + L)T_\delta + t \leq 2\int_0^{T_\delta} R_s\ud B_s + 2kLT_\delta.
$$

Now on the set $\{T_\delta \leq \tau\}$ we have
$$
\int_0^{T_\delta} R_s\ud B_s \geq \frac12\delta^2 - kL\tau.
$$

As $\int_0^{T_\delta} R_s\ud B_s$ is a local martingale, we know that it is a continuous time-change of Brownian motion. More specifically, there exists a Brownian motion $\tilde B$ such that
$$
\int_0^{T_\delta} R_s\ud B_s = \tilde B_\eta,
$$
where
$$
\eta = \int_0^{T_\delta} R_s^2\ud s.
$$

On the set $\{T_\delta \leq \tau\}$ it holds that
$$
\eta \leq \delta^2T_\delta \leq \delta^2\tau.
$$

Consequently,
$$
\max_{0 \leq s \leq \delta^2\tau} \tilde B_s \geq \tilde B_\eta \geq \frac12\delta^2 - kL\tau.
$$
Now the left-hand side is distributed as $\delta\sqrt{\tau}|\tilde B_1|$. It now follows that
\begin{align*}
\PP\left(\sup_{0\leq t \leq \tau} d(W_t,x_0) \geq \delta\right)		&=\PP(T_\delta \leq \tau)\\	
				&\leq \PP(\delta\sqrt{\tau}|\tilde B_1| \geq \frac12\delta^2 - kL\tau)\\
				&= \PP\left(|\tilde B_1| \geq \frac{\frac12\delta^2 - kL\tau}{\delta\sqrt{\tau}}\right)\\
				&\leq 2e^{-\frac12\frac{(kL\tau - \frac12\delta^2)^2}{\delta^2\tau}}
\end{align*}
Here, the last estimate follows simply by the fact that $\tilde B_1$ has a standard normal distribution so that $\PP(\tilde B_1 \geq \alpha) \leq e^{-\frac12\alpha^2}$ for $\alpha > 0$.


\smallskip

\textbf{Acknowledgement}
RK was supported by the Deutsche Forschungsgemeinschaft (DFG) via RTG 2131 High-dimensional Phenomena in Probability – Fluctuations and Discontinuity. RV was supported by the Peter Paul Peterich Foundation via the TU Delft University Fund.


\bibliographystyle{abbrv} 
\bibliography{../Bibliography/VersendaalBib}{}

\begin{thebibliography}{ATW06}

\bibitem[Arn89]{Arn89}
V.~I. Arnold.
\newblock {\em Mathematical methods of classical mechanics}, volume~60 of {\em
  Graduate Texts in Mathematics}.
\newblock Springer-Verlag, New York, second edition, 1989.
\newblock Translated from the Russian by K. Vogtmann and A. Weinstein.

\bibitem[ATW06]{ATW06}
Marc Arnaudon, Anton Thalmaier, and Feng-Yu Wang.
\newblock Harnack inequality and heat kernel estimates on manifolds with
  curvature unbounded below.
\newblock {\em Bull. Sci. Math.}, 130(3):223--233, 2006.

\bibitem[Aze80]{Aze78}
R.~Azencott.
\newblock Grandes d\'eviations et applications.
\newblock In {\em Eighth {S}aint {F}lour {P}robability {S}ummer {S}chool---1978
  ({S}aint {F}lour, 1978)}, volume 774 of {\em Lecture Notes in Math.}, pages
  1--176. Springer, Berlin, 1980.

\bibitem[Big04]{Big04}
J.~D. Biggins.
\newblock Large deviations for mixtures.
\newblock {\em Electron. Comm. Probab.}, 9:60--71, 2004.

\bibitem[CIL92]{CIL92}
Michael~G. Crandall, Hitoshi Ishii, and Pierre-Louis Lions.
\newblock User's guide to viscosity solutions of second order partial
  differential equations.
\newblock {\em Bull. Amer. Math. Soc. (N.S.)}, 27(1):1--67, 1992.

\bibitem[CK17]{CK17}
Francesca Collet and Richard~C. Kraaij.
\newblock Dynamical moderate deviations for the {C}urie--{W}eiss model.
\newblock {\em Stochastic Process. Appl.}, 127(9):2900--2925, 2017.

\bibitem[CL71]{CL71}
M.~G. Crandall and T.~M. Liggett.
\newblock Generation of semi-groups of nonlinear transformations on general
  {B}anach spaces.
\newblock {\em Amer. J. Math.}, 93:265--298, 1971.

\bibitem[dH00]{Hol00}
Frank den Hollander.
\newblock {\em Large deviations}, volume~14 of {\em Fields Institute
  Monographs}.
\newblock American Mathematical Society, Providence, RI, 2000.

\bibitem[DS89]{DS89}
Jean-Dominique Deuschel and Daniel~W. Stroock.
\newblock {\em Large deviations}, volume 137 of {\em Pure and Applied
  Mathematics}.
\newblock Academic Press, Inc., Boston, MA, 1989.

\bibitem[DZ98]{DZ98}
Amir Dembo and Ofer Zeitouni.
\newblock {\em Large deviations techniques and applications}, volume~38 of {\em
  Applications of Mathematics (New York)}.
\newblock Springer-Verlag, New York, second edition, 1998.

\bibitem[EK86]{EK86}
Stewart~N. Ethier and Thomas~G. Kurtz.
\newblock {\em Markov processes}.
\newblock Wiley Series in Probability and Mathematical Statistics: Probability
  and Mathematical Statistics. John Wiley \& Sons, Inc., New York, 1986.
\newblock Characterization and convergence.

\bibitem[FK06]{FK06}
Jin Feng and Thomas~G. Kurtz.
\newblock {\em Large deviations for stochastic processes}, volume 131 of {\em
  Mathematical Surveys and Monographs}.
\newblock American Mathematical Society, 2006.

\bibitem[FW12]{FW12}
Mark~I. Freidlin and Alexander~D. Wentzell.
\newblock {\em Random perturbations of dynamical systems}, volume 260 of {\em
  Grundlehren der Mathematischen Wissenschaften [Fundamental Principles of
  Mathematical Sciences]}.
\newblock Springer, Heidelberg, third edition, 2012.
\newblock Translated from the 1979 Russian original by Joseph Sz\"ucs.

\bibitem[Hsu02]{Hsu02}
Elton~P. Hsu.
\newblock {\em Stochastic analysis on manifolds}, volume~38 of {\em Graduate
  Studies in Mathematics}.
\newblock American Mathematical Society, Providence, RI, 2002.

\bibitem[J\o75]{Jor75}
Erik J\o{}rgensen.
\newblock The central limit problem for geodesic random walks.
\newblock {\em Z. Wahrscheinlichkeitstheorie und Verw. Gebiete}, 32:1--64,
  1975.

\bibitem[Kli82]{Kli82}
Wilhelm Klingenberg.
\newblock {\em Riemannian geometry}, volume~1 of {\em de Gruyter Studies in
  Mathematics}.
\newblock Walter de Gruyter \&\ Co., Berlin-New York, 1982.

\bibitem[Kra16]{Kra16}
Richard Kraaij.
\newblock Large deviations for finite state {M}arkov jump processes with
  mean-field interaction via the comparison principle for an associated
  {H}amilton-{J}acobi equation.
\newblock {\em J. Stat. Phys.}, 164(2):321--345, 2016.

\bibitem[Lee97]{Lee97}
John~M. Lee.
\newblock {\em Riemannian manifolds}, volume 176 of {\em Graduate Texts in
  Mathematics}.
\newblock Springer-Verlag, New York, 1997.
\newblock An introduction to curvature.

\bibitem[Roc70]{Roc70}
R.~Tyrrell Rockafellar.
\newblock {\em Convex analysis}.
\newblock Princeton Mathematical Series, No. 28. Princeton University Press,
  Princeton, N.J., 1970.

\bibitem[Spi79]{Spi79}
Michael Spivak.
\newblock {\em A comprehensive introduction to differential geometry. {V}ol.
  {I}}.
\newblock Publish or Perish, Inc., Wilmington, Del., second edition, 1979.

\bibitem[Str84]{Str84}
D.~W. Stroock.
\newblock {\em An introduction to the theory of large deviations}.
\newblock Universitext. Springer-Verlag, New York, 1984.

\bibitem[Var67]{Var67}
S.~R.~S. Varadhan.
\newblock Diffusion processes in a small time interval.
\newblock {\em Comm. Pure Appl. Math.}, 20:659--685, 1967.

\end{thebibliography}

\end{document}